\newcommand{\transposee}[1]{{\vphantom{#1}}^{\mathit t}{#1}}
\newcommand{\E}{\mathbb{E}}
\newcommand{\V}{\textrm{Var}}
\newcommand{\C}{\textrm{Cov}}
\newcommand{\B}{\textrm{Bias}}
\newcommand{\R}{\mathbb{R}}
\newcommand{\N}{\mathcal{N}}
\newcommand{\D}{\displaystyle}
\def\etal{\mbox{et al.}}
\begin{document}

\title{Efficient Estimation of Sensitivity Indices}
\author{S\'ebastien Da Veiga\thanks{IFP Energies nouvelles
1 \& 4, avenue de Bois-Pr\'eau
F-92852 Rueil-Malmaison Cedex  {\tt sebastien.da-veiga@ifpen.fr}} and Fabrice Gamboa\thanks{Institut de Math\'ematiques
Universit\'e Paul Sabatier
F-31062 Toulouse Cedex 9
{\tt http://www.lsp.ups-tlse.fr/Fp/Gamboa.} {\tt gamboa@math.univ-toulouse.fr}.}}

\maketitle
\begin{abstract}
In this paper we address the problem of efficient estimation of Sobol sensitivy indices.
First, we focus on general functional integrals of conditional moments of the form $\E(\psi(\E(\varphi(Y)|X)))$ 
where $(X,Y)$ is a random vector with joint density $f$ and $\psi$ and $\varphi$ are functions that are differentiable enough.
In particular, we show that asymptotical efficient estimation of this functional boils down to the estimation of crossed quadratic functionals.  
An efficient estimate of first-order sensitivity indices is then derived as a special case.
We investigate its properties on several analytical functions and illustrate its interest on a reservoir engineering case.
\end{abstract}

\begin{keywords} density estimation, semiparametric Cram\'er-Rao bound, global sensitivity analysis.
\end{keywords}

\begin{classcode} 2G20, 62G06, 62G07, 62P30
\end{classcode}

\section{Introduction}

In the past decade, the increasing interest in the design and analysis of computer experiments motivated the development of dedicated and sharp statistical tools \citep{santner03}.
Design of experiments, sensitivity analysis and proxy models are examples of research fields where numerous contributions have been proposed.
More specifically, global Sensitivity Analysis (SA) is a key method for investigating complex computer codes which model physical phenomena. 
It involves a set of techniques used to quantify the influence of uncertain input parameters on the variability in numerical model responses. 
Recently, sensitivity studies have been applied in a large variety of fields, ranging from chemistry \citep{CUK73,T90} or oil recovery \citep{IMDR01} to space science \citep{Carra07} and nuclear safety \citep{IVD06}.\\
In general, global SA refers to the probabilistic framework, meaning that the uncertain input parameters are modelled as a random vector. 
By propagation, every computer code output is itself a random variable. 
Global SA techniques then consists in comparing the probability distribution of the output with the conditional probability distribution of the output when some of the inputs are fixed. 
This yields in particular useful information on the impact of some parameters. 
Such comparisons can be performed by considering various criteria, each one of them providing a different insight on the input-output relationship. 
For example, some criteria are based on distances between the probability density functions (e.g. $L^1$ and $L^2$ norms (\cite{borgo07}) or Kullback-Leibler distance (\cite{LCS06}), while others rely on functionals of conditional moments. 
Among those, variance-based methods are the most widely used \citep{salcha00}. 
They evaluate how the inputs contribute to the output variance through the so-called Sobol sensitivity indices \citep{SOB93}, which naturally emerge from a functional ANOVA decomposition of the output \citep{hoeff48,owen94,anto84}. 
Interpretation of the indices in this setting makes it possible to exhibit which input or interaction of inputs most influences the variability of the computer code output. 
This can be typically relevant for model calibration \citep{kenoha01} or model validation \citep{bayber07}.\\
Consequently, in order to conduct a sensitivity study, estimation of such sensitivity indices is of great interest. 
Initially, Monte-Carlo estimates have been proposed \citep{SOB93,MCK95}. Recent work also focused on their asymptotic properties \citep{janon12}. 
However, in many applications, calls to the computer code are very expensive, from several minutes to hours. 
In addition, the number of inputs can be large, making Monte-Carlo approaches untractable in practice. To overcome this problem, recent work focused on the use of metamodeling techniques. 
The complex computer code is approximated by a mathematical model, referred to as a "metamodel", which should be as representative as possible of the computer code, with good prediction capability. 
Once the metamodel is built and validated, it is used in the extensive Monte-Carlo sampling instead of the complex numerical model. Several metamodels can be used: polynomials, Gaussian process metamodels (\cite{OOH04}, \cite{IMDR01}) or local polynomials
(\cite{SDV09}). 
However, in these papers, the approach is generally empirical in the sense that no convergence study is performed and do not provide any insight about the asymptotic behavior of the sensitivity indices estimates. 
The only exception is the work of \citet{SDV09}, where the authors investigate the convergence of a local-polynomial based estimate using the work of \citet{FG96} and \citet{WJ94}. 
In particular, this plug-in estimate achieves a nonparametric convergence rate.\\ 
In this paper, we go one step further and propose the first asymptotically efficient estimate for sensitivity indices. 
More precisely, we investigate the problem of efficient estimation of some general nonlinear functional based on the density of a pair of random variables. 
Our approach follows the work of \citet{BL96,BL05}, and we also refer to \citet{LEV78} and \citet{KIKI96} for general results on nonlinear functionals estimation.
Such functionals of a density appear in many statistical applications and their efficient estimation remains an active research field \citep{gine2008a,gine2008b,chacon11}.
However we consider functionals involving conditional densities, which necessitate a specific treatment.
The estimate obtained here can be used for global SA involving general conditional moments, but it includes as a special case Sobol sensitivity indices.
Note also that an extension of the approach developed in our work is simultaneously proposed in the context of sliced inverse regression \citep{loubes11}.\\
The paper is organized as follows. Section \ref{sa} first recaps variance-based methods for global SA. In particular, we point out which type of nonlinear functional appears in sensitivity indices. 
Section \ref{model} then describes the theoretical framework and the proposed methodology for building an asymptotically efficient estimator. 
In Section \ref{examples}, we focus on Sobol sensitivity indices and study numerical examples showing the good behavior of the proposed estimate.
We also illustrate its interest on a reservoir engineering example, where uncertainties on the geology propagate to the potential oil recovery of a reservoir. 
Finally, all proofs are postponed to the appendix.

\section{Global sensitivity analysis} \label{sa}

In many applied fields, physicists and engineers are faced with the problem of estimating some sensitivity indices. 
These indices quantify the impact of some input variables on an output. The general situation may be formalized as follows.\\
The output $Y\in\R$ is a nonlinear regression of input variables $\boldsymbol{\tau}=(\tau_1,\ldots,\tau_l)$ ($l\geq 1$ is generally large). 
This means that $Y$ and $\boldsymbol{\tau}$ satisfy the input-output relationship 
\begin{equation}
Y=\Phi(\boldsymbol{\tau}) \label{sobol}
\end{equation}
where $\Phi$ is a known nonlinear function. Usually, $\Phi$ is complicated and has not a closed form, but it may be computed through a computer code \citep{OOH04}.  
In general, the input $\boldsymbol{\tau}$ is modelled by a random vector, so that $Y$ is also a random variable. 
A common way to quantify the impact of input variables is to use the so-called Sobol sensitivity indices \citep{SOB93}. 
Assuming that all the random variables are square integrable, the Sobol index for the input $\tau_j$ ($j=1,\ldots,l$) is
\begin{equation}
\Sigma_j=\frac{\V(\E(Y|\tau_j))}{\V(Y)}. \label{si}
\end{equation}
Observing an i.i.d. sample $(Y_1,\boldsymbol{\tau}^{(1)}),\ldots,(Y_n,\boldsymbol{\tau}^{(n)})$ (with $Y_i=\Phi(\boldsymbol{\tau}^{(i)})$, $i=1,\ldots,n$), 
the goal is is then to estimate $\Sigma_j$ ($j=1,\ldots,l$). 
Obviously, (\ref{si}) may be rewritten as
\begin{equation*}
\Sigma_j=\frac{\E(\E(Y|\tau_j)^2)-\E(Y)^2}{\V(Y)}.
\end{equation*}
Thus, in order to estimate $\Sigma_j$, the hard part is $\E(\E(Y|\tau_j)^2)$.
In this paper we will provide an asymptotically efficient estimate for this kind of quantity. 
More precisely we will tackle the problem of asymptotically efficient estimation of some general nonlinear functional.\\
Let us specify the functionals we are interested in.
Let $(Y_1,X_1),\ldots,(Y_n,X_n)$ be a sample of i.i.d. random vectors of $\R^2$ having a {\it regular} density $f$ (see Section \ref{model} for the precise frame). 
We will study the estimation of the nonlinear functional
\begin{eqnarray*}
T(f)&=& \E\Big(\psi\big(\E(\varphi(Y)|X)\big)\Big)\\
&=& \iint \psi\left(\frac{\int \varphi(y)f(x,y)dy}{\int f(x,y)dy}\right)f(x,y)dxdy
\end{eqnarray*}
where $\psi$ and $\varphi$ are regular functions. Hence, the Sobol indices are the particular case obtained with $\psi(\xi)=\xi^2$ and $\varphi(\xi)=\xi$.\\
The method developed in order to obtain an asymptotically efficient estimate for $T(f)$ follows the one developed by \citet{BL96}. 
Roughly speaking, it involves a preliminary estimate $\hat{f}$ of $f$ built on a small part of the sample. 
This preliminary estimate is used in a Taylor expansion of $T(f)$ up to the second order in a neighbourhood of $\hat{f}$. 
This expansion allows to remove the bias that occurs when using a direct plug-in method. Hence, the bias correction involves a quadratic functional of $f$. 
Due to the form of $T$, this quadratic functional of $f$ may be written as
\begin{equation*}
\theta(f)=\iiint \eta(x,y_1,y_2)f(x,y_1)f(x,y_2)dxdy_1dy_2.
\end{equation*}
This kind of functional does not fall in the frame treated in \citet{BL96} or \citet{gine2008a} and have not been studied to the best of our knowledge. 
We study this problem in Section \ref{quad} where we build an asymptotically efficient estimate for $\theta$. 
Efficient estimation of $T(f)$ is then investigated in Section \ref{main}.

\section{Model frame and method} \label{model}

Let $a<b$ and $c<d$, $L^2(dxdy)$ will denote the set of square integrable functions on $[a,b]\times [c,d]$. 
Further, $L^2(dx)$ (resp. $L^2(dy)$) will denote the set of square integrable functions on $[a,b]$ (resp. $[c,d]$). 
For sake of simplicity, we work in the whole paper with the Lebesgue measure as reference measure. 
Nevertheless, most of the results presented can be obtained for a general reference measure on $[a,b]\times [c,d]$. 
Let $(\alpha_{i_{\alpha}}(x))_{i_{\alpha}\in D_1}$ (resp. $(\beta_{i_{\beta}}(y))_{i_{\beta}\in D_2}$) be a countable orthonormal basis of $L^2(dx)$ (resp. of $L^2(dy)$). 
We set $p_i(x,y)=\alpha_{i_{\alpha}}(x) \beta_{i_{\beta}}(y)$ with $i=(i_{\alpha},i_{\beta})\in D:=D_1\times D_2$. 
Obviously $(p_i(x,y))_{i\in D}$ is a countable orthonormal (tensor) basis of $L^2(dxdy)$. We will also use the following subset of $L^2(dxdy)$ :
\begin{equation*}
\mathcal{E}=\left\{ \sum_{i\in D} e_ip_i : (e_i)_{i\in D}\ \textrm{is a sequence with} \sum_{i\in D} \left|\frac{e_i}{c_i} \right|^2 \leq 1\right\},
\end{equation*}
here $(c_i)_{i\in D}$ is a given fixed positive sequence.\\
Let $(X,Y)$ having a bounded joint density $f$ on $[a,b]\times [c,d]$ from which we have a sample $(X_i,Y_i)_{i=1,\ldots,n}$. 
We will also assume that $f$ lies in the ellipsoid $\mathcal{E}$. Recall that we wish to estimate a conditional functional
\begin{equation*}
\E\Big(\psi\big(\E(\varphi(Y)|X)\big)\Big)
\end{equation*}
where $\varphi$ is a measurable bounded function with $\chi_1\leq \varphi\leq\chi_2$ and $\psi\in C^3([\chi_1,\chi_2])$ the set of thrice continuously differentiable functions on $[\chi_1,\chi_2]$. 
This last quantity can be expressed in terms of an integral depending on the joint density $f$:
\begin{eqnarray*}
T(f) & = &\iint \psi\left(\frac{\int \varphi(y)f(x,y)dy}{\int f(x,y)dy}\right)f(x,y)dxdy.\\
& =& \iint \psi(m(x))f(x,y)dxdy
\end{eqnarray*}
where $m(x)=\int \varphi(y)f(x,y)dy/\int f(x,y)dy$ is the conditional expectation of $\varphi(Y)$ given $(X=x)$. 
We suggest as a first step to consider a preliminary estimator $\hat{f}$ of $f$, and to expand $T(f)$ in a neighborhood of $\hat{f}$. 
To achieve this goal we first define $F:[0,1]\rightarrow\R$ :
\begin{equation*}
F(u)=T(uf+(1-u)\hat{f}) \quad (u\in[0,1]).
\end{equation*}
The Taylor expansion of $F$ between $0$ and $1$ up to the third order is
\begin{equation}
F(1)=F(0)+F'(0)+\frac{1}{2}F''(0)+\frac{1}{6}F'''(\xi)(1-\xi)^3 \label{taylorF}
\end{equation}
for some $\xi\in]0,1[$. Here, we have
\begin{equation*}
F(1)=T(f)
\end{equation*}
and
\begin{eqnarray*}
F(0)=T(\hat{f})&=&\iint \psi\left(\frac{\int \varphi(y)\hat{f}(x,y)dy}{\int \hat{f}(x,y)dy}\right)\hat{f}(x,y)dxdy\\
&=&\iint \psi(\hat{m}(x))\hat{f}(x,y)dxdy
\end{eqnarray*}
where $\hat{m}(x)=\int \varphi(y)\hat{f}(x,y)dy/\int \hat{f}(x,y)dy$. Straightforward calculations also give higher-order derivatives of $F$ :
\begin{equation*}
F'(0)=\iint \left(\big[\varphi(y)-\hat{m}(x)\big]\dot\psi(\hat{m}(x))+\psi(\hat{m}(x))\right)\Big(f(x,y)-\hat{f}(x,y)\Big)dxdy\\
\end{equation*}
\begin{eqnarray*}
F''(0)&=&\iiint \frac{\ddot\psi(\hat{m}(x))}{\left(\int\hat{f}(x,y)dy\right)} \big(\hat{m}(x)-\varphi(y)\big)\big(\hat{m}(x)-\varphi(z)\big)\\
&&\Big(f(x,y)-\hat{f}(x,y)\Big)\Big(f(x,z)-\hat{f}(x,z)\Big)dxdydz\\
\end{eqnarray*}
\begin{eqnarray*}
F'''(\xi)&=&\iiiint \frac{\left(\int\hat{f}(x,y)dy\right)^2}{\left(\int
  \xi f(x,y)+(1-\xi)\hat{f}(x,y)dy\right)^{5}}\\
  &&\left[\big(\hat{m}(x)-\varphi(y)\big)\big(\hat{m}(x)-\varphi(z)\big)\big(\hat{m}(x)-\varphi(t)\big)\right.\\
  &&\left(\int\hat{f}(x,y)dy\right)\dddot\psi\left(\hat{r}(\xi,x)\right)- 3\big(\hat{m}(x)-\varphi(y)\big)\big(\hat{m}(x)-\varphi(z)\big)\\
&&\left.\left(\int [\xi f(x,y)+(1-\xi)\hat{f}(x,y)]dy\right)
    \ddot\psi\left(\hat{r}(\xi,x)\right)\right]\\
&&\Big(f(x,y)-\hat{f}(x,y)\Big)\Big(f(x,z)-\hat{f}(x,z)\Big)\\
&&\Big(f(x,t)-\hat{f}(x,t)\Big)dxdydzdt
\end{eqnarray*}
where $\D{\hat{r}(\xi,x)=\frac{\int\varphi(y)[\xi f(x,y)+(1-\xi)\hat{f}(x,y)]dy}{\int [\xi f(x,y)+(1-\xi)\hat{f}(x,y)]dy}}$ and $\dot\psi$, $\ddot\psi$ and $\dddot\psi$ denote the three first derivatives of $\psi$.\\

Plugging these expressions into (\ref{taylorF}) yields the following expansion for $T(f)$:
\vspace{0.3cm}

\fbox{
\begin{minipage}{1\textwidth} 
\begin{align}
T(f)=&\iint H(\hat{f},x,y)f(x,y)dxdy\label{taylorT}\\
&+\iiint K(\hat{f},x,y,z)f(x,y)f(x,z)dxdydz+\Gamma_n \notag 
\end{align}
\end{minipage}}

\vspace{0.3cm}

where
\begin{eqnarray*}
H(\hat{f},x,y)&=& \big[\varphi(y)-\hat{m}(x)\big]\dot\psi(\hat{m}(x))+\psi(\hat{m}(x)),\\
K(\hat{f},x,y,z)&=& \frac{1}{2}\frac{\ddot\psi(\hat{m}(x))}{\left(\int\hat{f}(x,y)dy\right)} \big(\hat{m}(x)-\varphi(y)\big)\big(\hat{m}(x)-\varphi(z)\big),\\
\Gamma_n&=&\frac{1}{6}F'''(\xi)(1-\xi)^3
\end{eqnarray*}
for some $\xi\in]0,1[$. 
Notice that the first term is a linear functional of the density $f$, it will be estimated with
\[\frac{1}{n_2}\sum_{j=1}^{n_2} H(\hat{f},X_j,Y_j).\]
The second one involves a crossed term integral which can be written as
\begin{equation}
\iiint \eta(x,y_1,y_2)f(x,y_1)f(x,y_2)dxdy_1dy_2 \label{fq}
\end{equation}
where $\eta:\R^3\rightarrow\R$ is a bounded function verifying $\eta(x,y_1,y_2)=\eta(x,y_2,y_1)$ for all $(x,y_1,y_2)\in\R^3$.
In summary, the first term can be easily estimated, unlike the second one which deserves a specific study.
In the next section we then focus on the asymptotically efficient estimation of such crossed quadratic functionals. 
In Section \ref{main}, these results are finally used to propose an asymptotically efficient estimator for $T(f)$. 

\subsection{Efficient estimation of quadratic functionals} \label{quad}

In this section, our aim is to build an asymptotically efficient estimate for 
\begin{equation*}
\theta=\iiint \eta(x,y_1,y_2)f(x,y_1)f(x,y_2)dxdy_1dy_2.
\end{equation*}
We denote $a_i=\int fp_i$ the scalar product of $f$ with $p_i$ as defined at the beginning of Section \ref{model}. We will first build a projection estimator achieving a bias equal to
\begin{equation*}
-\iiint \left[S_Mf(x,y_1)-f(x,y_1)\right]\left[S_Mf(x,y_2)-f(x,y_2)\right]\eta(x,y_1,y_2)dxdy_1dy_2
\end{equation*}
where $S_Mf=\sum_{i\in M} a_ip_i$ and $M$ is a subset of $D$. Thus, the bias would only be due to projection. Developing the previous expression leads to a goal bias equal to
\begin{eqnarray}
&&2\iiint S_Mf(x,y_1)f(x,y_2)\eta(x,y_1,y_2)dxdy_1dy_2\notag\\
&&-\iiint S_Mf(x,y_1)S_Mf(x,y_2)\eta(x,y_1,y_2)dxdy_1dy_2\notag\\
&&-\iiint f(x,y_1)f(x,y_2)\eta(x,y_1,y_2)dxdy_1dy_2. \label{biais2}
\end{eqnarray}
Consider now the estimator $\hat{\theta}_n$ defined by
\begin{eqnarray}
\hat{\theta}_{n}&=&\frac{2}{n(n-1)}\sum_{i\in M}\sum_{j\neq
  k=1}^{n}p_{i}(X_{j},Y_{j})\int p_{i}(X_{k},u)\eta(X_{k},u,Y_{k})du\notag \\
&&-\frac{1}{n(n-1)}\sum_{i,i'\in M}\sum_{j\neq
  k=1}^{n}p_{i}(X_{j},Y_{j})p_{i'}(X_{k},Y_{k})\notag \\
  &&\int
p_{i}(x,y_{1})p_{i'}(x,y_{2})\eta(x,y_{1},y_{2})dxdy_{1}dy_{2}. \label{est}
\end{eqnarray}
This estimator achieves the desired bias :
\begin{lemma}\label{biais0}
The estimator $\hat{\theta}_{n}$ defined in (\ref{est}) estimates $\theta$ with bias equal to
\begin{equation*}
-\iiint
[S_{M}f(x,y_{1})-f(x,y_{1})][S_{M}f(x,y_{2})-f(x,y_{2})]\eta(x,y_{1},y_{2})dxdy_{1}dy_{2}.
\end{equation*}
\end{lemma}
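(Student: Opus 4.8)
The plan is to compute $\E(\hat\theta_n)$ directly and then subtract $\theta$. Since $(X_i,Y_i)_{i=1,\dots,n}$ is an i.i.d. sample, in each of the two sums over $j\neq k$ in (\ref{est}) the factor $p_i(X_j,Y_j)$ is independent of every measurable function of $(X_k,Y_k)$, so the expectation of each summand factorizes; moreover the boundedness of $f$ and $\eta$ on the compact rectangle, together with $M\subset D$ being finite, makes every interchange of expectation, (finite) summation and integration below legitimate by Fubini's theorem. Recalling $a_i=\int fp_i$, the three elementary expectations needed are
\[
\E\bigl(p_i(X_j,Y_j)\bigr)=a_i,\qquad
\E\bigl(p_i(X_j,Y_j)p_{i'}(X_k,Y_k)\bigr)=a_ia_{i'}\quad(j\neq k),
\]
\[
\E\Bigl(\int p_i(X_k,u)\,\eta(X_k,u,Y_k)\,du\Bigr)
=\iiint p_i(x,y_1)\,\eta(x,y_1,y_2)\,f(x,y_2)\,dx\,dy_1\,dy_2 .
\]

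For the first double sum in (\ref{est}) there are exactly $n(n-1)$ ordered pairs $(j,k)$ with $j\neq k$, each with the same expectation, so the prefactor $\frac{2}{n(n-1)}$ cancels this count; summing over $i\in M$ and using $S_Mf=\sum_{i\in M}a_ip_i$ shows that its expectation equals $2\iiint S_Mf(x,y_1)f(x,y_2)\eta(x,y_1,y_2)\,dx\,dy_1\,dy_2$, i.e. the first line of (\ref{biais2}). The same cancellation of $n(n-1)$ applied to the second double sum, followed by summation over $i,i'\in M$, turns its expectation into $-\iiint S_Mf(x,y_1)S_Mf(x,y_2)\eta(x,y_1,y_2)\,dx\,dy_1\,dy_2$, i.e. the second line of (\ref{biais2}). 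Hence $\E(\hat\theta_n)$ is the sum of the first two lines of (\ref{biais2}); subtracting $\theta=\iiint f(x,y_1)f(x,y_2)\eta(x,y_1,y_2)\,dx\,dy_1\,dy_2$ (the third line) gives that the bias of $\hat\theta_n$ is exactly (\ref{biais2}).

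It then remains to check that (\ref{biais2}) coincides with the factored expression in the statement. Expanding
\[
-\iiint\bigl[S_Mf(x,y_1)-f(x,y_1)\bigr]\bigl[S_Mf(x,y_2)-f(x,y_2)\bigr]\eta(x,y_1,y_2)\,dx\,dy_1\,dy_2
\]
produces four integrals, and the two mixed ones, $\iiint S_Mf(x,y_1)f(x,y_2)\eta$ and $\iiint f(x,y_1)S_Mf(x,y_2)\eta$, are equal by the symmetry relation $\eta(x,y_1,y_2)=\eta(x,y_2,y_1)$ (swap $y_1\leftrightarrow y_2$), so together they contribute $2\iiint S_Mf(x,y_1)f(x,y_2)\eta$; matching term by term one recovers exactly (\ref{biais2}), which proves the lemma.

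There is no genuine obstacle here: the computation is essentially careful bookkeeping. The points deserving attention are the exact matching of the combinatorial weights $\frac{2}{n(n-1)}$ and $\frac{1}{n(n-1)}$ against the number $n(n-1)$ of off-diagonal ordered pairs, the justification of Fubini's theorem (which is where the boundedness hypotheses on $f$, $\eta$ and the compactness of $[a,b]\times[c,d]$ enter), and the use of the symmetry of $\eta$, which is precisely what makes the two mixed terms collapse and the bias factor into the claimed quadratic expression in $S_Mf-f$.
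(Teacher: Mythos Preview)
Your proof is correct and follows essentially the same approach as the paper: split $\hat\theta_n$ into its two sums, compute each expectation using independence of $(X_j,Y_j)$ and $(X_k,Y_k)$ for $j\neq k$ together with $a_i=\int fp_i$ and $S_Mf=\sum_{i\in M}a_ip_i$, and then compare with the expansion (\ref{biais2}) of the target bias. The paper had already recorded (\ref{biais2}) as the expansion of the factored expression before stating the lemma, so it simply invokes it at the end, whereas you re-derive that expansion explicitly via the symmetry $\eta(x,y_1,y_2)=\eta(x,y_2,y_1)$; this is the same argument with slightly more detail.
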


Since we will carry out an asymptotic analysis, we will work with a sequence $(M_n)_{n\geq 1}$ of subsets of $D$. We will need an extra assumption concerning this sequence: \\
\begin{itemize}
\item[A1.] For all $n\geq 1$, we can find a subset $M_n\subset D$ such that 
$\left(\sup_{i\notin M_n}|c_{i}|^{2}\right)^{2}\approx \frac{|M_n|}{n^2}$ ($A_n\approx B_n$ means $\lambda_1\leq A_n/B_n\leq \lambda_2$ for some positive constants $\lambda_1$ and $\lambda_2$). 
Furthermore, $\forall t\in L^2(dxdy)$, $\D{\int (S_{M_n}t-t)^2dxdy\rightarrow 0}$ when $n\rightarrow\infty.$
\end{itemize}

The following theorem gives the most important properties of our estimate $\hat{\theta}_n$ :
\begin{theorem}\label{tfq}
Assume A1 hold. Then $\hat{\theta}_{n}$ has the following properties:
\begin{itemize}
\item[(i)] If $|M_n|/n\rightarrow 0$ when $n\rightarrow \infty$, then
\begin{equation}
\sqrt{n}\left(\hat{\theta}_n-\theta\right)\rightarrow \N\left(0,\Lambda(f,\eta)\right), \label{na}
\end{equation}
\begin{equation}
\left| \E\left(\hat{\theta}_n-\theta\right)^2 - \Lambda(f,\eta)\right|
\leq \gamma_1\left[ \frac{|M_n|}{n}+\|S_{M_n}f-f\|_2+\|S_{M_n}g-g\|_2\right], \label{ea}
\end{equation}
where $\D{g(x,y):=\int f(x,u)\eta(x,y,u)du}$ and
$$\Lambda(f,\eta)=4  \left[ \iint g(x,y)^2f(x,y)dxdy	
-\left( \iint g(x,y)f(x,y)dxdy\right)^2\right].$$
\item[(ii)] Otherwise
\begin{equation*}
\E\left(\hat{\theta}_n-\theta\right)^2 \leq \gamma_2\frac{|M_n|}{n},
\end{equation*}
\end{itemize}
where $\gamma_1$ and $\gamma_2$ are constants depending only on $\|f\|_{\infty}$, $\|\eta\|_{\infty}$ and $\Delta_Y$ (with $\Delta_Y=d-c$). 
Moreover, these constants are increasing functions of these quantities.
\end{theorem}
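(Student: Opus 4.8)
The plan is to recognise $\hat\theta_n$ as a $U$-statistic of order two and to run its Hoeffding decomposition. Writing $W_j=(X_j,Y_j)$ and setting $\rho_i(x,y)=\int p_i(x,u)\eta(x,u,y)\,du$ and $\mu_{i,i'}=\iiint p_i(x,y_1)p_{i'}(x,y_2)\eta(x,y_1,y_2)\,dx\,dy_1\,dy_2$ for $i,i'\in D$, one rewrites (\ref{est}) --- using that each sum $\sum_{j\neq k}$ runs twice over unordered pairs and that $\mu_{i,i'}=\mu_{i',i}$ because $\eta(x,y_1,y_2)=\eta(x,y_2,y_1)$ --- as $\hat\theta_n=\frac{2}{n(n-1)}\sum_{j<k}H_n(W_j,W_k)$ with the symmetric, \emph{deterministic} kernel $H_n(w,w')=\sum_{i\in M_n}\bigl(p_i(w)\rho_i(w')+p_i(w')\rho_i(w)\bigr)-\sum_{i,i'\in M_n}\mu_{i,i'}\,p_i(w)p_{i'}(w')$. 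By Lemma~\ref{biais0}, $\E(\hat\theta_n)-\theta$ is the announced bias, and Cauchy--Schwarz on the bounded rectangle gives $|\E(\hat\theta_n)-\theta|\le\|\eta\|_\infty\Delta_Y\|S_{M_n}f-f\|_2^2$. I would then write $\hat\theta_n-\E(\hat\theta_n)=\frac2n\sum_{j=1}^n h_{1,n}(W_j)+R_n$, with $h_{1,n}(w)=\E\bigl(H_n(w,W_2)\bigr)-\E(\hat\theta_n)$ and $R_n$ the completely degenerate second-order part, so that by orthogonality of the Hoeffding components $\V(\hat\theta_n)=\frac4n\,\zeta_{1,n}+\E(R_n^2)$ with $\zeta_{1,n}:=\V(h_{1,n}(W_1))$ and $0\le\E(R_n^2)\le\frac2{n(n-1)}\E\bigl(H_n(W_1,W_2)^2\bigr)$.

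Next I would identify $h_{1,n}$. Using $\E(p_i(W))=a_i$, the identities $\sum_{i\in M_n}a_i\rho_i=g_{M_n}:=\int S_{M_n}f(x,u)\eta(x,\cdot,u)\,du$ and $\sum_{i'\in M_n}a_{i'}\mu_{i,i'}=\langle p_i,g_{M_n}\rangle$, and $\E(\rho_i(W))=\langle p_i,g\rangle$ --- the last one relying on the symmetry of $\eta$ to rewrite the function of the theorem as $g(x,y)=\int f(x,u)\eta(x,u,y)\,du$ --- one gets $h_{1,n}+\E(\hat\theta_n)=S_{M_n}g+g_{M_n}-S_{M_n}g_{M_n}$. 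Since $\|g_{M_n}-g\|_2\le\|\eta\|_\infty\Delta_Y\|S_{M_n}f-f\|_2$ by the same Cauchy--Schwarz estimate, and A1 gives $\|S_{M_n}t-t\|_2\to0$ for every $t\in L^2$ (note $g\in L^2$ since $f$ is a bounded density and $\eta$ is bounded), this yields $\|h_{1,n}-(g-\theta)\|_2\le\gamma\bigl(\|S_{M_n}f-f\|_2+\|S_{M_n}g-g\|_2\bigr)$; in particular $h_{1,n}\to g-\theta$ in $L^2$, hence $\zeta_{1,n}\to\V\bigl(g(X,Y)\bigr)=\Lambda(f,\eta)/4$, with the same quantitative control on $|\zeta_{1,n}-\Lambda/4|$.

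The crux is the bound $\E(H_n^2)=O(|M_n|)$, which then gives $\E(R_n^2)\le\gamma|M_n|/n^2$. The naive pointwise bound on the double sum $\sum_{i,i'}\mu_{i,i'}p_ip_{i'}$ only gives $O(|M_n|^2)$, so instead I would integrate in one variable first: conditionally on $W_2$, $\E\bigl[(\sum_{i\in M_n}c_ip_i(W_1))^2\mid W_2\bigr]=\iint f\,(\sum_i c_ip_i)^2\le\|f\|_\infty\sum_i c_i^2$ by orthonormality, applied with $c_i=\rho_i(W_2)$ and with $c_i=\sum_{i'\in M_n}\mu_{i,i'}p_{i'}(W_2)$. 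This reduces everything to $\E\bigl(\sum_{i\in M_n}\rho_i(W)^2\bigr)$ and $\sum_{i,i'\in M_n}\mu_{i,i'}^2$. Writing $\rho_i(x,y)=\alpha_{i_\alpha}(x)\langle\beta_{i_\beta},\eta(x,\cdot,y)\rangle$ and summing the squared Fourier coefficients over $i_\beta$ gives $\int\eta(x,u,y)^2\,du\le\|\eta\|_\infty^2\Delta_Y$; since at most $|M_n|$ distinct indices $i_\alpha$ occur in $M_n$ and $\int\alpha_{i_\alpha}^2=1$, one obtains $\E\bigl(\sum_{i\in M_n}\rho_i(W)^2\bigr)\le\|f\|_\infty\|\eta\|_\infty^2\Delta_Y^2\,|M_n|$. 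The analogous Bessel argument applied to $\mu_{i,i'}=\langle p_i,G_{i'}\rangle$ with $G_{i'}(x,y_1)=\int p_{i'}(x,y_2)\eta(x,y_1,y_2)\,dy_2$ gives $\sum_{i\in M_n}\mu_{i,i'}^2\le\|G_{i'}\|_2^2$ and then $\sum_{i,i'\in M_n}\mu_{i,i'}^2\le\|\eta\|_\infty^2\Delta_Y^2\,|M_n|$. Hence $\E(H_n^2)\le\gamma|M_n|$ with $\gamma$ increasing in $\|f\|_\infty,\|\eta\|_\infty,\Delta_Y$.

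Finally I would assemble the pieces. For (i), Slutsky's lemma combines: a triangular-array central limit theorem for $\frac2{\sqrt n}\sum_j h_{1,n}(W_j)$ --- Lindeberg's condition holds because the $L^2$-convergence $h_{1,n}\to g-\theta$ makes $(h_{1,n}^2)$ uniformly integrable and forces $\zeta_{1,n}\to\Lambda/4$ --- giving the limit $\N(0,\Lambda(f,\eta))$; $\sqrt n R_n\to0$ in $L^2$ because $|M_n|/n\to0$; and $\sqrt n(\E(\hat\theta_n)-\theta)\to0$ because A1 forces $\|S_{M_n}f-f\|_2^2\le\sup_{i\notin M_n}|c_i|^2\approx|M_n|^{1/2}/n$. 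For (\ref{ea}) I would combine $\E(\hat\theta_n-\theta)^2=\B^2+\frac4n\zeta_{1,n}+\E(R_n^2)$ with $\B^2\le\|\eta\|_\infty^2\Delta_Y^2\|S_{M_n}f-f\|_2^4$, the bound $\E(R_n^2)\le\gamma|M_n|/n^2$, and $|\zeta_{1,n}-\Lambda/4|\le\gamma(\|S_{M_n}f-f\|_2+\|S_{M_n}g-g\|_2)$: the $1/n^2$ and $|M_n|/n^2$ terms are absorbed into $|M_n|/n$, and the fourth power of the bounded projection error is controlled by its first power (here $\|S_{M_n}f-f\|_2^2\le\|f\|_2^2\le\|f\|_\infty$, since $f$ is a density, keeps the constant of the right form), which gives (\ref{ea}). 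In case (ii), $\B^2\le\gamma|M_n|/n^2$ by the ellipsoid bound $\|S_{M_n}f-f\|_2^2\le\sup_{i\notin M_n}|c_i|^2$, $\zeta_{1,n}$ is bounded, and $\E(R_n^2)\le\gamma|M_n|/n^2$, so $\E(\hat\theta_n-\theta)^2\le\gamma_2|M_n|/n$ directly. Throughout, the constants are explicit monotone functions of $\|f\|_\infty$, $\|\eta\|_\infty$, $\Delta_Y$. The main obstacle is clearly the $O(|M_n|)$ control of $\E(H_n^2)$ --- beating the trivial $O(|M_n|^2)$ for the double sum is exactly what makes $\hat\theta_n$ asymptotically efficient.
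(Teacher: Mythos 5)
Your proposal is correct and follows essentially the same route as the paper: the same Hoeffding decomposition of the U-statistic into a linear part, a degenerate part and a deterministic bias term, the bias bound via Cauchy--Schwarz and the ellipsoid condition A1, the key $O(|M_n|)$ bound on the second moment of the degenerate kernel obtained by the same orthonormality/Bessel argument (the paper's bounds on $W_1$, $W_5$ and $\sum_{i,i'}c_{ii'}^2$), and the identification of the linear part with $2g$ up to projection errors controlled by $\|S_{M_n}f-f\|_2+\|S_{M_n}g-g\|_2$. The only cosmetic difference is that you invoke a Lindeberg triangular-array CLT for $h_{1,n}$, whereas the paper compares $P_nL$ with $P_n(2g)$ in $L^2$ and applies the ordinary CLT; both rest on exactly the same $L^2$ convergence, so this is a variant of presentation rather than of substance.
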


\begin{remark}
Since in our main result (to be given in the next section) $\eta$ will depend on $n$ through the preliminary estimator $\hat{f}$, we need in (\ref{ea}) a bound that depends explicitly on $n$. 
Note however that (\ref{ea}) implies
\begin{equation*}
\lim_{n\rightarrow\infty} n\E\left(\hat{\theta}_n-\theta\right)^2 =\Lambda(f,\eta).
\end{equation*}
\end{remark}

 The asymptotic properties of $\hat{\theta}_n$ are of particular importance, in the sense that they are optimal as stated in the following theorem.

\begin{theorem}\label{cramerrao1}
Consider the estimation of
\begin{equation*}
\theta=\theta(f)=\iiint \eta(x,y_1,y_2)f(x,y_1)f(x,y_2)dxdy_1dy_2.
\end{equation*}
Let $f_0\in\mathcal{E}$. Then, for all estimator $\hat{\theta}_n$ of $\theta(f)$ and every family $\mathcal{V}(f_0)$ of vicinities of $f_0$, we have
\begin{equation*}
\inf_{\{\mathcal{V}(f_0)\}} \liminf_{n\rightarrow \infty} \sup_{f\in\mathcal{V}(f_0)} n\E(\hat{\theta}_n-\theta(f_0))^2\geq \Lambda(f_0,\eta).
\end{equation*}
\end{theorem}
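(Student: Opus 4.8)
\medskip
\noindent\textbf{Proof proposal.}
The plan is the classical route to a semiparametric Cram\'er--Rao bound, in the spirit of \citet{BL96,BL05}: reduce to a one-parameter submodel through $f_0$, apply the van Trees (Bayesian Cram\'er--Rao) inequality along it, and then optimise the resulting bound over the submodel. Concretely, fix a bounded function $\phi$ on $[a,b]\times[c,d]$ with $\iint\phi f_0\,dx\,dy=0$ and set $f_\vartheta=f_0(1+\vartheta\phi)$; for $|\vartheta|\le\delta$ small enough this is a probability density, and one checks that it stays in the ellipsoid $\mathcal{E}$. The path $\Theta(\vartheta):=\theta(f_\vartheta)$ is a quadratic polynomial in $\vartheta$, with
\[
\Theta'(0)=2\,\C_{f_0}\!\big(g_0(X,Y),\phi(X,Y)\big),\qquad g_0(x,y)=\int f_0(x,u)\eta(x,y,u)\,du,
\]
obtained by differentiating the quadratic form and using $\E_{f_0}\phi=0$; the Fisher information of $\{f_\vartheta\}$ for one observation at $\vartheta=0$ is $I(0)=\iint\phi^2 f_0\,dx\,dy=\E_{f_0}[\phi(X,Y)^2]$, and $I(\vartheta)$ stays bounded on $[-\delta,\delta]$.

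Next, for an arbitrary estimator $\hat\theta_n$ and a smooth prior density $\lambda$ on $[-\delta,\delta]$ that vanishes at the endpoints and has finite information $I(\lambda)$, the van Trees inequality applied to the model of $n$ i.i.d.\ observations with density $f_\vartheta$ and to the smooth target $\Theta$ gives
\[
\sup_{|\vartheta|\le\delta}n\,\E_{f_\vartheta}\big(\hat\theta_n-\theta(f_\vartheta)\big)^2\ \ge\ \frac{\big(\int\Theta'(\vartheta)\lambda(\vartheta)\,d\vartheta\big)^2}{\int I(\vartheta)\lambda(\vartheta)\,d\vartheta+I(\lambda)/n}.
\]
Letting $n\to\infty$ and then concentrating $\lambda$ near $0$ (which forces $\delta\to0$), the right-hand side tends to $\Theta'(0)^2/I(0)=4\,\C_{f_0}(g_0,\phi)^2/\E_{f_0}[\phi^2]$. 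Since for $\delta$ small the submodel lies inside any prescribed family of vicinities of $f_0$, this number is a lower bound for $\inf_{\{\mathcal{V}(f_0)\}}\liminf_n\sup_{f\in\mathcal{V}(f_0)}n\E_f(\hat\theta_n-\theta(f))^2$. As this holds for every admissible $\phi$, I would finish with the Cauchy--Schwarz optimisation
\[
\sup_\phi\ \frac{4\,\C_{f_0}(g_0,\phi)^2}{\E_{f_0}[\phi^2]}=4\,\V_{f_0}\big(g_0(X,Y)\big)=\Lambda(f_0,\eta),
\]
the supremum being approached by the direction $\phi\propto g_0-\E_{f_0}g_0$; combined with Theorem~\ref{tfq}(i), this also shows that $\hat\theta_n$ is asymptotically efficient.

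The step I expect to be the main obstacle is the feasibility of this least-favourable perturbation: one must exhibit bounded mean-zero functions $\phi$ approaching $g_0-\E_{f_0}g_0$ in $L^2(f_0)$ for which $f_\vartheta=f_0(1+\vartheta\phi)$ is, for all small $|\vartheta|$, a genuine probability density still lying in $\mathcal{E}$. Here one uses that $g_0$ is bounded ($\eta$ and $f_0$ being bounded on a compact set), and one may first take $\phi$ to have only finitely many nonzero coefficients in the basis $(p_i)$ --- so that the ellipsoid constraint $\sum_i|(f_\vartheta)_i/c_i|^2\le1$ is easy to control for $|\vartheta|$ small --- and then pass to the limit; when $f_0$ lies on the boundary of $\mathcal{E}$ the admissible directions are confined to the tangent space of $\mathcal{E}$ at $f_0$ and this requires separate care. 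The remaining ingredients --- the van Trees regularity conditions (quadratic-mean differentiability of $\vartheta\mapsto f_\vartheta$, continuity of $\vartheta\mapsto I(\vartheta)$), the computations of $\Theta'(0)$ and $I(0)$, and the final Cauchy--Schwarz step --- are routine.
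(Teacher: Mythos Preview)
Your argument is correct, but it takes a different route from the paper. The paper proceeds via the abstract framework of \citet{IK91} (see also Chapter~25 of \citet{VV98}): it computes the Fr\'echet derivative $\theta'(f_0)\cdot u=\langle 2g_0,u\rangle$, invokes the LAN property of i.i.d.\ models with normalising factor $A_n(t)=\sqrt{f_0}\,t/\sqrt{n}$, and then reads off the bound as $\|t\|_{L^2}^2$ where $t$ is the Riesz representer of $K(u)=\theta'(f_0)\cdot\big(\sqrt{f_0}\,\mathrm{Proj}_{H(f_0)}u\big)$, namely $t=2g_0\sqrt{f_0}-2\big(\iint g_0 f_0\big)\sqrt{f_0}$, whose squared norm is exactly $\Lambda(f_0,\eta)$. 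Your approach via one-dimensional submodels and the van~Trees inequality is the more hands-on, Gill--Levit style alternative: it is arguably more elementary (no convolution theorem, no explicit tangent-space machinery) and makes the least-favourable direction $\phi\propto g_0-\E_{f_0}g_0$ visible through the Cauchy--Schwarz step, at the cost of having to verify by hand that the perturbed densities $f_\vartheta$ remain in $\mathcal{E}$ --- a point you rightly flag and which the paper's abstract treatment does not address explicitly either. Both routes yield the same influence function and the same bound; the paper's is shorter once the \citet{IK91} result is taken for granted, while yours is more self-contained.
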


In other words, the optimal asymptotic variance for the estimation of $\theta$ is $\Lambda(f_0,\eta)$. 
As our estimator defined in (\ref{est}) achieves this variance, it is therefore asymptotically efficient.
We are now ready to use this result to propose an efficient estimator of $T(f)$.

\subsection{Main Theorem}\label{main}

In this section we come back to our main problem of the asymptotically efficient estimation of
\begin{equation*}
T(f)=\iint \psi\left(\frac{\int \varphi(y)f(x,y)dy}{\int f(x,y)dy}\right)f(x,y)dxdy.
\end{equation*}
Recall that we have derived in (\ref{taylorT}) an expansion for $T(f)$.
The key idea is to use here the previous results on the estimation of crossed quadratic functionals. 
Indeed we have provided an asymptotically efficient estimator for the second term of this expansion, conditionally on $\hat{f}$. 
A natural and straightforward estimator for $T(f)$ is then
\begin{eqnarray*}
\widehat{T}_n&=& \frac{1}{n_2} \sum_{j=1}^{n_2} H(\hat{f},X_j,Y_j)\\
&&+ \frac{2}{n_2(n_2-1)}\sum_{i\in M}\sum_{j\neq
  k=1}^{n_2}p_{i}(X_{j},Y_{j})\int p_{i}(X_{k},u)K(\hat{f},X_{k},u,Y_{k})du\\
&&-\frac{1}{n_2(n_2-1)}\sum_{i,i'\in M}\sum_{j\neq
  k=1}^{n_2}p_{i}(X_{j},Y_{j})p_{i'}(X_{k},Y_{k})\\
  &&\int p_{i}(x,y_{1})p_{i'}(x,y_{2})K(\hat{f},x,y_{1},y_{2})dxdy_{1}dy_{2}.
\end{eqnarray*}
In the above expression, one can note that the remainder $\Gamma_n$ does not appear : we will see in the proof of the following theorem that it is negligible comparing to the two first terms.\\

 In order to study the asymptotic properties of $\widehat{T}_n$, some assumptions are required concerning the behavior of the joint density $f$ and its preliminary estimator $\hat{f}$ :
\begin{itemize}
\item[A2.] $\textrm{supp} f \subset [a,b]\times [c,d]$ and $\forall (x,y)\in \textrm{supp} f$, $0<\alpha\leq f(x,y)\leq\beta$ with $\alpha,\beta\in\R$\\
\item[A3.] One can find an estimator $\hat{f}$ of $f$ built with $n_1\approx n/\log(n)$ observations, such that 
$$\forall (x,y)\in \textrm{supp} f,\; 0<\alpha-\epsilon\leq \hat{f}(x,y)\leq\beta+\epsilon.$$
Moreover, 
$$\forall 2\leq q<+\infty,\; \forall l\in\mathbb{N}^*,\; \E_f\|\hat{f}-f\|_q^l\leq C(q,l)n_1^{-l\lambda}$$
for some $\lambda > 1/6$ and some constant $C(q,l)$ not depending on $f$ belonging to the ellipsoid $\mathcal{E}$.\\
\end{itemize}
Here $\textrm{supp} f$ denotes the set where $f$ is different from $0$. Assumption A2 is restrictive in the sense that only densities with compact support can be considered, 
excluding for example a Gaussian joint distribution.\\
Assumption A3 imposes to the estimator $\hat{f}$ a convergence fast enough towards $f$. We will use this result to control the remainder term $\Gamma_n$.\\

 We can now state the main theorem of the paper. It investigates the asymptotic properties of $\widehat{T}_n$ under assumptions A1, A2 and A3.

\begin{theorem}\label{tfec}
Assume that A1, A2 and A3 hold. Then $\widehat{T}_n$ has the following properties if $\D{\frac{|M_n|}{n}\rightarrow 0}$: 
\begin{equation}
\D{\sqrt{n}\left(\widehat{T}_n-T(f)\right)\rightarrow \N\left(0,C(f)\right)},\\ \label{na2}
\end{equation}
\begin{equation}
\lim_{n\rightarrow\infty} n\E\left(\widehat{T}_n-T(f)\right)^2 = C(f), \label{ea2}
\end{equation}
where $C(f)=\E\bigg(\V(\varphi(Y)|X)\Big[\dot\psi\big(\E(Y|X)\big)\Big]^2\bigg)+\V\Big(\psi\big(\E(\varphi(Y)|X)\big)\Big)$.\\
\end{theorem}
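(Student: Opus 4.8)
The strategy is to combine the Taylor expansion \eqref{taylorT} with the quadratic-functional estimator of Theorem \ref{tfq}, applied conditionally on the preliminary estimate $\hat f$. Concretely, write $\widehat T_n - T(f) = \left[\widehat T_n - \widehat{T}_n^{\,\mathrm{cond}}\right] + \left[\widehat{T}_n^{\,\mathrm{cond}} - T(f)\right]$, where the first bracket isolates the linear-term fluctuation and the second is the error of the quadratic-functional estimator with $\eta = K(\hat f,\cdot,\cdot,\cdot)$ applied to the independent second subsample of size $n_2 \approx n$. The first step is a conditioning argument: freeze the $n_1 \approx n/\log n$ observations used to build $\hat f$, so that $\hat m$, $H(\hat f,\cdot,\cdot)$ and $K(\hat f,\cdot,\cdot,\cdot)$ become deterministic functions; then all randomness in $\widehat T_n$ comes from the second subsample. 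Under A2 and A3, $\hat f$ is bounded away from $0$ and $\infty$ on $\mathrm{supp}\,f$ with overwhelming probability, so the denominators $\int \hat f(x,y)\,dy$ are uniformly bounded below, and $\psi \in C^3$ gives uniform bounds on $\dot\psi(\hat m)$, $\ddot\psi(\hat m)$, $\dddot\psi$; hence $\|K(\hat f,\cdot,\cdot,\cdot)\|_\infty$ is controlled uniformly in $\hat f$.

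The second step handles the linear term $\frac{1}{n_2}\sum_{j=1}^{n_2} H(\hat f,X_j,Y_j)$: conditionally on $\hat f$ it is an average of i.i.d.\ bounded variables with mean $\iint H(\hat f,x,y) f(x,y)\,dx\,dy$, so a CLT gives fluctuations of order $n^{-1/2}$ around that mean, with conditional variance $\V_f\big(H(\hat f,X,Y)\big)$. As $\hat f \to f$ in every $L^q$ (A3), one checks by dominated convergence that $H(\hat f,X,Y) \to H(f,X,Y) = \big[\varphi(Y) - m(X)\big]\dot\psi(m(X)) + \psi(m(X))$, so the conditional variance converges to $\V_f\big(H(f,X,Y)\big)$. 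The third step applies Theorem \ref{tfq}(i) conditionally on $\hat f$, with $\eta = K(\hat f,\cdot,\cdot,\cdot)$: this yields the $\sqrt n$-CLT and the variance bound \eqref{ea} for the quadratic part, with limiting conditional variance $\Lambda(f, K(f,\cdot,\cdot,\cdot))$ once one shows $K(\hat f,\cdot,\cdot,\cdot) \to K(f,\cdot,\cdot,\cdot)$ and the perturbation term in \eqref{ea} is $o(1)$ — here the hypothesis $|M_n|/n \to 0$, together with $\|S_{M_n}g - g\|_2 \to 0$ from A1 (applied to $g(x,y) = \int f(x,u) K(\hat f,x,y,u)\,du$, which is a fixed $L^2$ function once $\hat f$ is frozen), does the job. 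One then merges the two independent (conditionally) pieces — linear term on its own slice, quadratic term on another, or more carefully the same slice with the cross-covariance computed — and identifies the total asymptotic variance. A short computation with $K(f,x,y,z) = \tfrac12 \ddot\psi(m(x))\big(m(x)-\varphi(y)\big)\big(m(x)-\varphi(z)\big)/\int f(x,y)\,dy$ shows that $\Lambda(f,K(f,\cdot,\cdot,\cdot))$ contributes the term $\E\big(\V(\varphi(Y)|X)[\dot\psi(\E(Y|X))]^2\big)$ — note $\dot\psi$ of the conditional mean, matching $C(f)$ — while the linear term contributes $\V\big(\psi(\E(\varphi(Y)|X))\big)$, and a direct check shows the conditional cross-covariance between the two vanishes asymptotically (the leading term of the quadratic estimator is a degenerate-type centered sum orthogonal to the linear average, in the limit). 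Assembling these gives $C(f)$.

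The fourth step controls the remainder $\Gamma_n = \tfrac16 F'''(\xi)(1-\xi)^3$ from \eqref{taylorF}: its integrand is a triple product $\prod_{w\in\{y,z,t\}}\big(f(x,w)-\hat f(x,w)\big)$ times factors bounded uniformly under A2–A3 (the denominator $\int[\xi f + (1-\xi)\hat f]\,dy$ stays bounded below since both $f$ and $\hat f$ are in $[\alpha - \epsilon, \beta + \epsilon]$). Thus $|\Gamma_n| \lesssim \|\hat f - f\|_3^3$ up to the uniform constants, and A3 with $\lambda > 1/6$ gives $\E_f|\Gamma_n| \lesssim n_1^{-3\lambda} = o(n_1^{-1/2}) = o(n^{-1/2}\,(\log n)^{-1/2})$ — but actually we need $n\E\Gamma_n^2 \to 0$, which follows from $\E_f\|\hat f - f\|_6^6 \lesssim n_1^{-6\lambda}$ and $6\lambda > 1$, so $n \cdot n_1^{-6\lambda} = n \cdot (n/\log n)^{-6\lambda} \to 0$; hence $\Gamma_n$ is negligible at the $\sqrt n$ scale and does not affect the CLT or the variance limit. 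Finally, I would assemble \eqref{na2} by Slutsky (the error of approximating the conditional laws by the $\hat f = f$ limit is $o_P(n^{-1/2})$, using that the constants $\gamma_1, \gamma_2$ in Theorem \ref{tfq} are increasing in $\|\eta\|_\infty, \|f\|_\infty$ and hence uniformly bounded over the high-probability event where $\hat f$ behaves well), and \eqref{ea2} by combining the conditional $L^2$ bound from \eqref{ea}, the bound on $\Gamma_n$, and the event where A3's tail estimates control the complement.

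**Main obstacle.** The delicate point is the interchange of the conditioning and the limit: Theorem \ref{tfq} is stated for a fixed bounded symmetric $\eta$, but here $\eta = K(\hat f,\cdot,\cdot,\cdot)$ is random and varies with $n$. The remark after Theorem \ref{tfq} flags exactly this — \eqref{ea} is engineered to be uniform with constants $\gamma_1$ depending only on $\|f\|_\infty$, $\|\eta\|_\infty$, $\Delta_Y$. So the real work is (a) verifying that on the good event these sup-norms are bounded uniformly in $n$, (b) showing the perturbation terms $\|S_{M_n}g - g\|_2$ with the $n$-dependent $g$ still go to $0$ — which needs a slightly strengthened version of A1 applied uniformly over a class of $g$'s, or an argument that $g$ converges in $L^2$ fast enough that $\|S_{M_n}g_n - g_n\|_2 \le \|S_{M_n}g - g\|_2 + 2\|g_n - g\|_2 \to 0$, and (c) carefully computing the asymptotic cross-covariance between the linear average and the U-statistic-like quadratic estimator to confirm it vanishes, so that the two variance contributions simply add to give $C(f)$. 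Everything else is bookkeeping with dominated convergence and the A3 moment bounds.
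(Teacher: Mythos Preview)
Your overall architecture is correct, but there is a substantive error in how you distribute the asymptotic variance $C(f)$ between the linear and the quadratic parts. You claim that the linear term contributes $\V\big(\psi(\E(\varphi(Y)\mid X))\big)$ and that the quadratic term contributes $\E\big(\V(\varphi(Y)\mid X)[\dot\psi(\E(\varphi(Y)\mid X))]^2\big)$ via $\Lambda(f,K(f,\cdot,\cdot,\cdot))$. Both attributions are wrong.

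First, the linear statistic is $\frac{1}{n_2}\sum_j H(f,X_j,Y_j)$ with $H(f,x,y)=[\varphi(y)-m(x)]\dot\psi(m(x))+\psi(m(x))$; a direct conditioning computation gives
\[
\V\big(H(f,X,Y)\big)=\E\!\Big(\V(\varphi(Y)\mid X)\,\dot\psi(m(X))^2\Big)+\V\big(\psi(m(X))\big)=C(f),
\]
so the linear term already carries the \emph{entire} asymptotic variance, not just the second summand.

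Second, the quadratic term contributes nothing. With $\eta=K(f,\cdot,\cdot,\cdot)$, the function appearing in $\Lambda$ is
\[
g(x,y)=\int K(f,x,y,u)f(x,u)\,du=\tfrac12\frac{\ddot\psi(m(x))}{\int f(x,\cdot)}\,(m(x)-\varphi(y))\int(m(x)-\varphi(u))f(x,u)\,du=0,
\]
because $\int\varphi(u)f(x,u)\,du=m(x)\int f(x,u)\,du$. Hence $\Lambda(f,K(f,\cdot,\cdot,\cdot))=0$. This is precisely the mechanism the paper exploits: conditionally on $\hat f$, Theorem~\ref{tfq} with the bound \eqref{ea} gives $n\,\E\big[(\hat Q'-Q')^2\mid\hat f\big]\to\Lambda(f,K(\hat f,\cdot))$, and since $\hat g\to g\equiv 0$ in $L^2$ one gets $n\,\E(\hat Q'-Q')^2\to 0$. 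The quadratic estimator is therefore purely a bias-correction device; it is $o_P(n^{-1/2})$ and does not enter the CLT or the variance limit.

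Consequently your ``main obstacle (c)'' (computing the cross-covariance between the linear and quadratic pieces and showing it vanishes) is a non-issue: there is no contribution from the quadratic side to cross with. The actual remaining work, after controlling $\Gamma_n$ (your bound via $\|\hat f-f\|_3^6$ is fine; the $\|\cdot\|_6$ detour is unnecessary), is to show that replacing $H(\hat f,\cdot)$ by $H(f,\cdot)$ in the empirical average costs $o_P(n^{-1/2})$ --- i.e.\ that $R_2=\sqrt n\,\big[P_{n_2}H(\hat f,\cdot)-P_{n_2}H(f,\cdot)-(\E H(\hat f,\cdot)-\E H(f,\cdot))\big]$ has $\E R_2^2\to 0$, which follows from the mean value theorem and A3. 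You also need the perturbation argument you flagged in (b), namely $\|S_{M_n}\hat g-\hat g\|_2\le\|\hat g-g\|_2+\|S_{M_n}g-g\|_2$ with $g\equiv 0$, so this reduces to $\|\hat g\|_2\to 0$.
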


 We can also compute as in the previous section the semiparametric Cram\'er-Rao bound for this problem.

\begin{theorem}\label{cramerrao2}
Consider the estimation of
\begin{equation*}
T(f)=\iint\psi\left(\frac{\int \varphi(y) f(x,y)dy}{\int f(x,y)dy}\right) f(x,y)dxdy=\E\Big(\psi\big(\E(\varphi(Y)|X)\big)\Big)
\end{equation*}
for a random vector $(X,Y)$ with joint density $f\in\mathcal{E}$. Let $f_0\in\mathcal{E}$ be a density verifying the assumptions of Theorem  \ref{tfec}.
Then, for all estimator $\widehat{T}_n$ of $T(f)$ and every family $\mathcal{V}(f_0)$ of vicinities of $f_0$, we have
\begin{equation*}
\inf_{\{\mathcal{V}(f_0)\}} \liminf_{n\rightarrow \infty} \sup_{f\in\mathcal{V}(f_0)} n\E(\widehat{T}_n-T(f_0))^2\geq C(f_0).
\end{equation*}
\end{theorem}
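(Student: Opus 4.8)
The plan is to establish the semiparametric Cramér-Rao lower bound by reducing it, via the same van Trees / Bayesian argument used for Theorem \ref{cramerrao1}, to the computation of an efficient influence function and the associated Fisher information. First I would construct a suitable finite-dimensional parametric submodel through $f_0$: take a one-parameter (or finite-parameter) family $f_t = f_0(1 + t h)$ — or more carefully a family staying inside the ellipsoid $\mathcal{E}$ and positive, say $f_t \propto f_0 \exp(t h)$ normalized — where $h$ ranges over a rich subset of $L^2(dxdy)$ with $\int h f_0 = 0$. For such a path the score at $t=0$ is (essentially) $h$, and the Fisher information is $\|h\|_{L^2(f_0)}^2$. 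The classical lower bound for estimating the real parameter $T(f_t)$ along this path is then $(\frac{d}{dt}T(f_t)|_{t=0})^2 / \|h\|^2_{L^2(f_0)}$, and maximizing over all admissible directions $h$ yields the squared norm of the efficient influence function, which is the claimed $C(f_0)$.

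The core computation is therefore the Gâteaux derivative of $T$ at $f_0$ in direction $h$. Since $T(f) = \iint \psi(m(x)) f(x,y)\,dx\,dy$ with $m(x) = \int \varphi(y) f(x,y)\,dy / \int f(x,y)\,dy$, differentiating gives, exactly as in the expression for $F'(0)$ in Section \ref{model} (with $\hat f$ replaced by $f_0$ and $f - \hat f$ by $h$),
\begin{equation*}
\frac{d}{dt}T(f_t)\Big|_{t=0} = \iint \Big( \big[\varphi(y) - m_0(x)\big]\dot\psi(m_0(x)) + \psi(m_0(x)) \Big) h(x,y)\,dx\,dy,
\end{equation*}
where $m_0$ is the conditional expectation under $f_0$. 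Writing $\varphi(Y) = \E(\varphi(Y)|X) + \big(\varphi(Y) - \E(\varphi(Y)|X)\big)$ and subtracting the mean $T(f_0) = \E(\psi(m_0(X)))$, one identifies the efficient influence function
\begin{equation*}
\tilde\ell(x,y) = \big(\varphi(y) - m_0(x)\big)\dot\psi(m_0(x)) + \psi(m_0(x)) - T(f_0),
\end{equation*}
and its second moment under $f_0$ is precisely $\E\big(\V(\varphi(Y)|X)[\dot\psi(\E(\varphi(Y)|X))]^2\big) + \V\big(\psi(\E(\varphi(Y)|X))\big) = C(f_0)$, using that the two summands are orthogonal (the first has conditional mean zero given $X$). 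Here I would note the mild abuse in the paper's statement of $C(f)$ where $\dot\psi$ is evaluated at $\E(Y|X)$ rather than $\E(\varphi(Y)|X)$; I would write $\E(\varphi(Y)|X)$ throughout.

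To turn this heuristic into the stated $\liminf$-$\sup$ bound I would follow the standard local asymptotic minimax route already invoked for Theorem \ref{cramerrao1}: fix a direction $h$, put a smooth prior on $t$ supported in a shrinking neighborhood $[-\delta/\sqrt n, \delta/\sqrt n]$, apply the van Trees inequality to the parametric submodel $\{f_t\}$ to get $\sup_{|t|\le \delta/\sqrt n} n\,\E_t(\widehat T_n - T(f_t))^2 \ge (\frac{d}{dt}T(f_t))^2/\|h\|^2 + o(1)$, check that $T(f_t) - T(f_0) = O(1/\sqrt n)$ uniformly so the centering at $T(f_0)$ is harmless, and finally take the supremum over $h$ to reach $C(f_0)$. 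The main obstacle is the usual one for this family of arguments: verifying that the submodels $f_t$ genuinely lie in $\mathcal{E}$ and in the class of densities satisfying the assumptions of Theorem \ref{tfec} (compact support, lower and upper bounds $\alpha,\beta$) for $t$ small, and that the map $t \mapsto f_t$ is differentiable in quadratic mean (DQM) with the score computed above — the positivity and ellipsoid constraints force some care in choosing the parametrization and in restricting the set of allowed directions $h$, but because $f_0$ is bounded away from $0$ and $\infty$ on its support these are routine perturbations. Given the identical structure, I would simply point to the proof of Theorem \ref{cramerrao1} and indicate the two changes: the different influence function (hence different Fisher-information supremum) and the elementary orthogonal decomposition giving the two-term form of $C(f_0)$.
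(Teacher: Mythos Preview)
Your proposal is correct and follows essentially the same route as the paper: compute the G\^ateaux/Fr\'echet derivative of $T$ at $f_0$ to identify the influence function $H(f_0,x,y)=[\varphi(y)-m_0(x)]\dot\psi(m_0(x))+\psi(m_0(x))$, then invoke a standard semiparametric lower-bound result to obtain $C(f_0)=\int H(f_0,\cdot)^2 f_0-\big(\int H(f_0,\cdot)f_0\big)^2$, with the two-term form following from the orthogonal decomposition you describe. One small correction: the paper's proof of Theorem~\ref{cramerrao1}, to which Theorem~\ref{cramerrao2} simply refers, does not use a van~Trees/Bayesian argument but the Ibragimov--Khas'minskii LAN framework (projecting the derivative onto the tangent space $\{u:\int u\sqrt{f_0}=0\}$ and reading off $\|t\|^2$); your van~Trees route would work just as well and leads to the identical computation.
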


Combination of theorems \ref{tfec} and \ref{cramerrao2} finally proves that $\widehat{T}_n$ is asymptotically efficient.

\section{Application to the estimation of sensitivity indices} \label{examples}

Now that we have built an asymptotically efficient estimate for $T(f)$, we can apply it to the particular case we were initially interested it: the estimation of Sobol sensitivity indices. 
Let us then come back to model (\ref{sobol}) :
\begin{equation*}
Y=\Phi(\boldsymbol{\tau})
\end{equation*}
where we wish to estimate (\ref{si}):
\begin{equation*}
\Sigma_j=\frac{\V(\E(Y|\tau_j))}{\V(Y)}=\frac{\E(\E(Y|\tau_j)^2)-\E(Y)^2}{\V(Y)} \quad j=1,\ldots,l.
\end{equation*}
To do so, we have an i.i.d. sample $(Y_1,\boldsymbol{\tau}^{(1)}),\ldots,(Y_n,\boldsymbol{\tau}^{(n)})$. 
We will only give here the procedure for the estimation of $\Sigma_1$ since it will be the same for the other sensitivity indices. 
Denoting $X:=\tau_1$, this problem is equivalent to estimating $\E(\E(Y|X)^2)$ with an i.i.d. sample $(Y_1,X_1),\ldots,(Y_n,X_n)$ with joint density $f$. 
We can hence apply the estimate we developed previously by letting $\psi(\xi)=\xi^2$ and $\varphi(\xi)=\xi$:
\begin{eqnarray*}
T(f)&=& \E(\E(Y|X)^2)\\
&=& \iint \left(\frac{\int yf(x,y)dy}{\int f(x,y)dy}\right)^2f(x,y)dxdy.
\end{eqnarray*}
The Taylor expansion in this case becomes
\begin{eqnarray*}
T(f)&=&\iint H(\hat{f},x,y)f(x,y)dxdy\\
&&+\iiint K(\hat{f},x,y,z)f(x,y)f(x,z)dxdydz+\Gamma_n 
\end{eqnarray*}
where
\begin{eqnarray*}
H(\hat{f},x,y)&=& 2y\hat{m}(x)-\hat{m}(x)^2,\\
K(\hat{f},x,y,z)&=&\frac{1}{\left(\int\hat{f}(x,y)dy\right)} \big(\hat{m}(x)-y\big)\big(\hat{m}(x)-z\big)
\end{eqnarray*}
and the corresponding estimator is
\begin{eqnarray*}
\widehat{T}_n&=& \frac{1}{n_2} \sum_{j=1}^{n_2} H(\hat{f},X_j,Y_j)\\
&&+ \frac{2}{n_2(n_2-1)}\sum_{i\in M}\sum_{j\neq
  k=1}^{n_2}p_{i}(X_{j},Y_{j})\int p_{i}(X_{k},u)K(\hat{f},X_{k},u,Y_{k})du\\
&&-\frac{1}{n_2(n_2-1)}\sum_{i,i'\in M}\sum_{j\neq
  k=1}^{n_2}p_{i}(X_{j},Y_{j})p_{i'}(X_{k},Y_{k})\\
  &&\int p_{i}(x,y_{1})p_{i'}(x,y_{2})K(\hat{f},x,y_{1},y_{2})dxdy_{1}dy_{2}.
\end{eqnarray*}
for some preliminary estimator $\hat{f}$ of $f$, an orthonormal basis $(p_i)_{i\in D}$ of $L^2(dxdy)$ and a subset $M\subset D$ verifying the hypotheses of Theorem \ref{tfec}.\\

We propose now to investigate the practical behavior of this estimator on two analytical models and on a reservoir engineering test case. 
In all subsequent simulation studies, the preliminary estimator $\hat{f}$ will be a kernel density estimator with bounded support built on $n_1=[\log(n)/n]$ observations. 
Moreover, we choose the Legendre polynomials on $[a,b]$ and $[c,d]$ to build the orthonormal basis $(p_i)_{i\in D}$ and we will take $|M|=\sqrt{n}$. 
Finally, the integrals in $\widehat{T}_n$ are computed with an adaptive Simpson quadrature.\\

\subsection{Simulation study on analytical functions}
The first model we investigate is
\begin{equation}
Y=\tau_1 + \tau_2^4 \label{model1}
\end{equation}
where three configurations are considered ($\tau_1$ and $\tau_2$ being independent):
\begin{itemize}
\item[(a)] $\tau_j\sim \mathcal{U}(0,1)$, $j=1,2$;
\item[(b)] $\tau_j\sim \mathcal{U}(0,3)$, $j=1,2$;
\item[(c)] $\tau_j\sim \mathcal{U}(0,5)$, $j=1,2$.
\end{itemize}
For each configuration, we report the results obtained with $n=100$ and $n=10000$ in Table \ref{tab_modele31}.
Note that we repeat the estimation 100 times with different different random samples of $\left(\tau_1,\tau_2\right)$.

\begin{table}
\tbl{Conditional moments for analytical model (\ref{model1}). Mean and standrad deviation of $\widehat{T}_n$ for different values of $n$.}
        {\begin{tabular}{@{}lccc}\toprule
            Inputs & $\E(\E(Y|\tau_j)^2)$ &  $\widehat{T}_n$ & $\widehat{T}_n$\\
            & & $n=100$ & $n=10000$\\
            \toprule
            \textbf{Configuration (a)} & & & \\
            $\tau_1$ & 0.5733 & 0.5894 +/- 0.052 & 0.5729 +/- 0.005\\
            $\tau_2$ & 0.5611 & 0.5468 +/- 0.054 & 0.5611 +/- 0.005\\
            \colrule
            \textbf{Configuration (b)} & & &\\
            $\tau_1$ & 314.04 & 305.98 +/- 52.1 & 318.27 +/- 7.52\\
            $\tau_2$ & 779.85 & 814.04 +/- 10.3 & 787.82 +/- 0.53\\
            \colrule
            \textbf{Configuration (c)} & & &\\
            $\tau_1$ & 16258 & 18414 +/- 3759 & 16897 +/- 427\\
            $\tau_2$ & 44034 & 44667 +/- 82.6 & 44073 +/- 8.17\\
            \botrule
        \end{tabular}}\label{tab_modele31}
\end{table}

The asymptotically efficient estimator $\widehat{T}_n$ gives a very accurate approximation of sensitivity indices when $n=10000$. 
But surprisingly, it also gives a reasonably accurate estimate when $n$ only equals $100$, whereas it has been built to achieve the best symptotic rate of convergence.\\

It is then interesting to compare it with other estimators, more precisely two nonparametric estimators that have been specifically built to give an accurate approximation of sensitivity indices when $n$ is not large.
The first one is based on a Gaussian process metamodel \citep{OOH04}, while the other one involves local polynomial estimators \citep{SDV09}. 
The comparison is performed on the following model :
\begin{eqnarray}
Y&=&0.2\exp(\tau_1-3)+2.2|\tau_2|+1.3\tau_2^6-2\tau_2^2-0.5\tau_2^4-0.5\tau_1^4 \notag \\
&&+2.5\tau_1^2+0.7\tau_1^3+\frac{3}{(8\tau_1-2)^2+(5\tau_2-3)^2+1}+\sin(5\tau_1)\cos(3\tau_1^2) \label{model2}
\end{eqnarray}
where $\tau_1$ and $\tau_2$ are independent and uniformly distributed on $[-1,1]$. This nonlinear function is interesting since it presents a peak and valleys. 
We estimate the sensitivity indices with a sample of size $n=100$, the results are given in Table \ref{comp1}.\\

\begin{table}
\tbl{Comparison between efficient estimation and nonparametric estimates on analytical model (\ref{model2}).}
        {\begin{tabular}{@{}lcccc}\toprule
             & True value & Oakley-O'Hagan & Local polynomials & $\widehat{T}_n$ \\
            \colrule
            $\V(\E(Y|X^1))$ & 1.0932 &  1.0539 & 1.0643 & 1.1701\\
            $\V(\E(Y|X^2))$ & 0.0729 &  0.1121 & 0.0527 & 0.0939\\
            \botrule
        \end{tabular}}\label{comp1}
\end{table}

Globally, the best estimates are given by the local polynomials technique. However, the accuracy of the asymptotically efficient estimator $\widehat{T}_n$ is comparable to that of the nonparametric ones. 
These results confirm that $\widehat{T}_n$ is a valuable estimator even with a rather complex model and a small sample size (recall that here $n=100$).\\

\subsection{Reservoir engineering example}

The PUNQ test case (Production forecasting with UNcertainty Quantification) is an oil reservoir model derived from real field data \citep{manmez01}.
The considered reservoir is surrounded by an aquifer in the north and the west, and delimited by a fault in the south and the east. 
The geological model is composed of five independent layers, three of good quality and two of poorer quality.
Six producer wells (PRO-1, PRO-4, PRO-5, PRO-11, PRO-12 and PRO-15) have been drilled, and production is supported by four additional wells injecting water (X1, X2, X3 and X4).
The geometry of the reservoir and the well locations are given in Figure \ref{punq}, left.

\begin{figure}
\begin{center}
\subfigure[]{
\resizebox*{6cm}{8cm}{\includegraphics{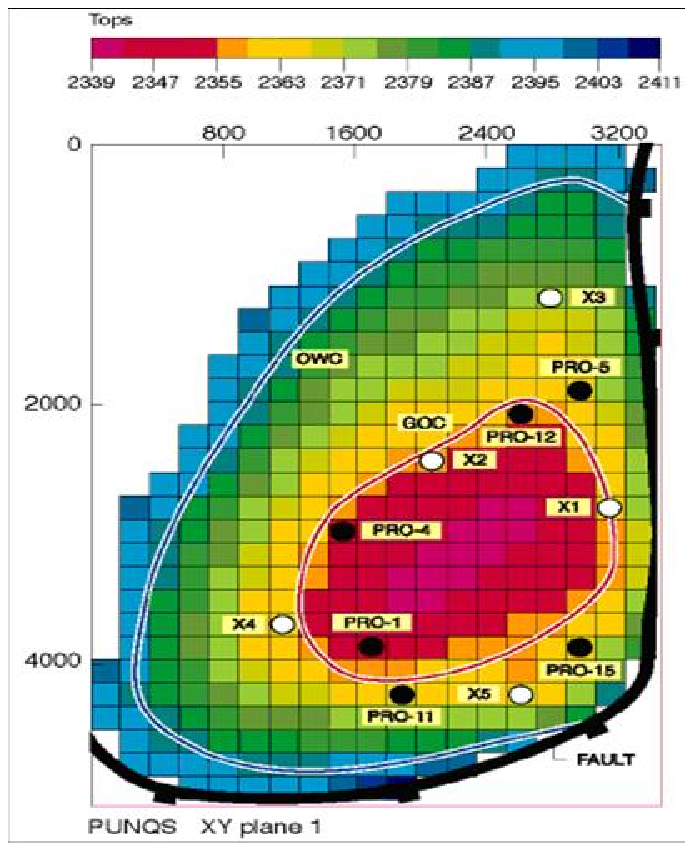}}}%
\subfigure[]{
\resizebox*{7cm}{8cm}{\includegraphics{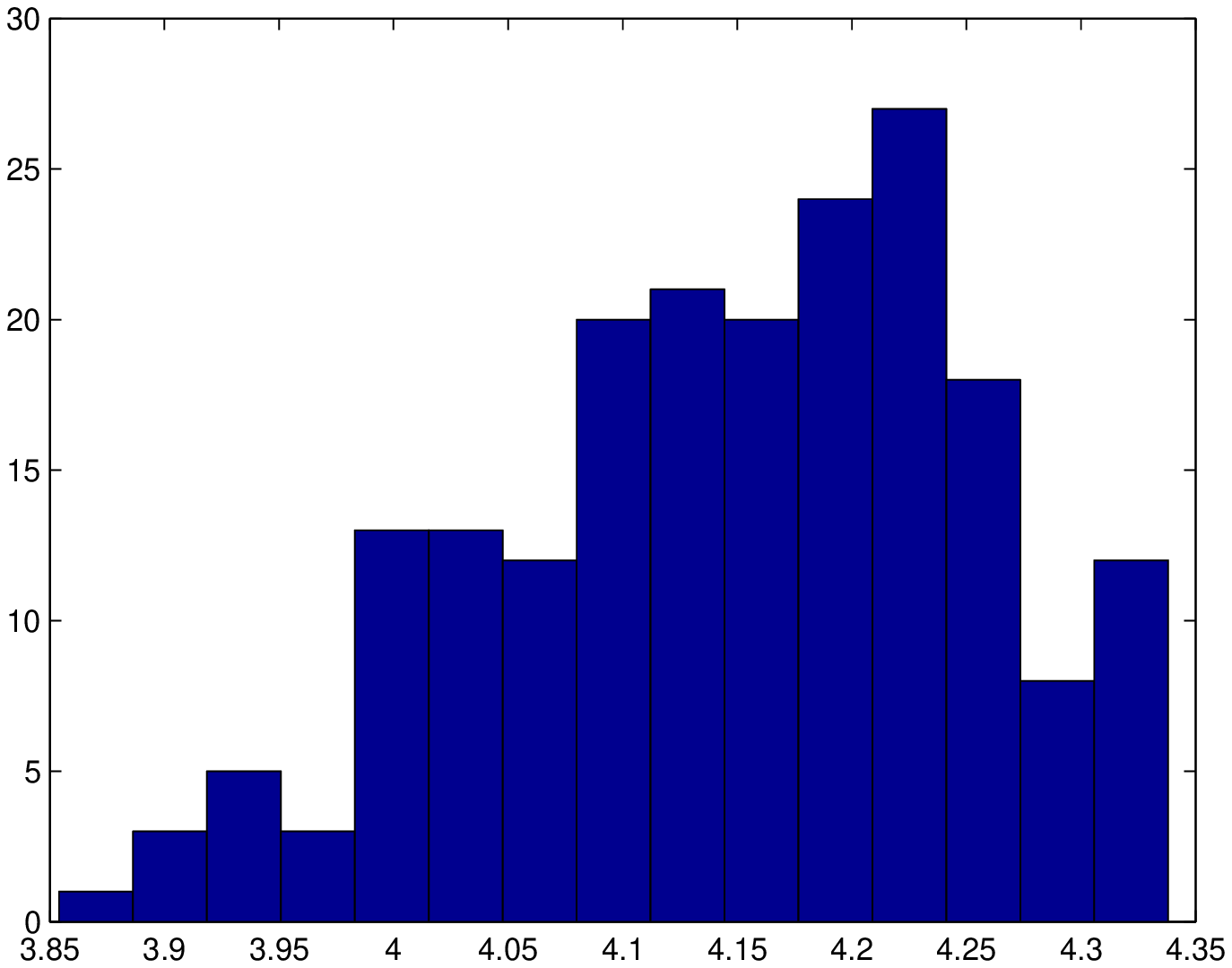}}}%
\caption{Left: top view of the PUNQ reservoir. Producer and injector wells are indicated by black and white circles, respectively.
OWC and GOW stand for Oil Water Contact and Gas Oil Contact. Right: histogram of the cumulative production after 12 years ($10^6\ m^3$).}%
\label{punq}
\end{center}
\end{figure}

In this setting, 7 variables which are characteristic of media, rocks, fluids or aquifer activity, are considered as uncertain: 
the coefficient of aquifer strength (AQUI), horizontal and vertical permeability multipliers in
good layers (MPV1 and MPH1, respectively), horizontal and vertical permeability multipliers in poor layers (MPV2 and MPH2, respectively), 
residual oil saturation after waterflood and after gas flood (SORW and SORG, respectively).
We focus here on the cumulative production of oil of this field during 12 years. 
In practice, a fluid flow simulator is used to forecast this oil production for every value of the uncertain parameters we might want to investigate.
The uncertain parameters are assumed to be uniformly distributed, with ranges given in Table \ref{tabpunq}. 
We draw a random sample of size $n = 200$ of these 7 parameters, and perform the corresponding fluid-flow simulations to compute the cumulative oil production after 12 years.
The histogram of the production obtained with this sampling is depicted in Figure \ref{punq}, right.
Clearly, the impact of the uncertain parameters on oil production is large, since different values yield forecats varying by tens of thousands of oil barrels.
In this context, reservoir engineers aim at identifying which parameters affect the most the production. 
This help them design strategies in order to reduce the most influential uncertainties, which will reduce, by propagation, the uncertainty on production forecasts.\\

In this context, computation of sensivity indices is of great interest. 
Starting from the random sample of size $n=200$, we then estimate the first-order sensitivity index of each parameter with the estimator $\widehat{T}_n$.
Results are given in Table \ref{tabpunq}. 
\begin{table}
\tbl{Range of variation and estimated first-order sensitivity index of the uncertain parameters of the PUNQ model.}
        {\begin{tabular}{@{}lcc}\toprule
            Parameter & Range of variation & Estimated sensitivity index \\
	    & & with $\widehat{T}_n$ (\%)\\
            \colrule
            AQUI & 0.2 - 0.3 & 7.206\\
	    MPH1 & 0.8 - 1.2 & 40.929\\
	    MPH2 & 0.8 - 1.2 & 0.419\\
	    MPV1 & 0.8 - 1.2 & 0.041\\
	    MPV2 & 0.8 - 1.2 & 0.693\\
	    SORG & 0.15 - 0.2 & 0.338\\
	    SORW & 0.15 - 0.25 & 49.4\\
	    \botrule
        \end{tabular}}\label{tabpunq}
\end{table}
As expected, the most influential parameters are the horizontal permeability multiplier in the good reservoir units MPH1 and the residual oil saturation after waterflood SORW.
Indeed, fluid dispacement towards the producer wells is mainly driven by the permeability in units with good petrophysical properties and by water injection.
More interestingly, vertical permeability multipliers do not seem to impact oil production in this case.
This means that fluid displacements are mainly horizontal in this reservoir.

\section{Discussion and conclusions}

In this paper, we developed a framework to build an asymptotically efficient estimate for nonlinear conditional functionals.
This estimator is both practically computable and has optimal asymptotic properties.
In particular, we show how Sobol sensitivty indices appear as a special case of our estimator.
We investigate its practical behavior on two analytical functions, and illustrate that it can compete with metamodel-based estimators.
A reservoir engineering application case is also studied, where geological and petrophysical uncertain parameters affect the forecasts on oil production.
The methodology developed here will be extended to other problems in forthcoming work. 
A very attractive extension is the construction of an adaptive procedure to calibrate the size of $M_n$ as done in \citet{BL05} for the $L^2$ norm.
However, this problem is non obvious since it would involve treating refined inequalities on U-statistics such as presented in \citet{HR02}.
From a sensitivity analysis perspective, we will also investigate efficient estimation of other indices based on entropy or other norms.
Ideally, this would give a general framework for building estimates in global sensitivity analysis.

\section*{Acknowledgements}

Many thanks are due to A. Antoniadis, B. Laurent and F. Wahl for helpful discussion.
This work has been partially supported by the French National Research Agency (ANR) through
COSINUS program (project COSTA-BRAVA ANR-09-COSI-015).

\bibliographystyle{gNST}

\appendices

\section{Proofs of Theorems}

\subsection{Proof of Lemma \ref{biais0}}

Let $\hat{\theta}_{n}=\hat{\theta}_{n}^1-\hat{\theta}_{n}^2$ where 
\begin{equation*}
\hat{\theta}_{n}^1=\frac{2}{n(n-1)}\sum_{i\in M}\sum_{j\neq
  k=1}^{n}p_{i}(X_{j},Y_{j})\int p_{i}(X_{k},u)\eta(X_{k},u,Y_{k})du
\end{equation*}
and
\begin{eqnarray*}
\hat{\theta}_{n}^2&=&\frac{1}{n(n-1)}\sum_{i,i'\in M}\sum_{j\neq
  k=1}^{n}p_{i}(X_{j},Y_{j})p_{i'}(X_{k},Y_{k})\\
  &&\int
p_{i}(x,y_{1})p_{i'}(x,y_{2})\eta(x,y_{1},y_{2})dxdy_{1}dy_{2}.
\end{eqnarray*}
Let us first compute $\E(\hat{\theta}_{n}^1)$ :
\begin{eqnarray*}
\E(\hat{\theta}_{n}^1)&=&2\sum_{i\in M} \iint p_i(x,y)f(x,y)dxdy \iiint p_i(x,y)\eta(x,u,y)f(x,y)dxdydu\\
&=& 2\sum_{i\in M} a_i \iiint p_i(x,y)\eta(x,u,y)f(x,y)dxdydu\\
&=& 2\iiint \left(\sum_{i\in M} a_i p_i(x,y)\right) \eta(x,u,y)f(x,y)dxdydu\\
&=& 2\iiint S_Mf(x,y)\eta(x,u,y)f(x,y)dxdydu.
\end{eqnarray*}
Furthermore,
\begin{eqnarray*}
\E(\hat{\theta}_{n}^2)&=&\sum_{i,i'\in M} \iint p_i(x,y)f(x,y)dxdy\iint p_{i'}(x,y)f(x,y)dxdy\\
&&\int p_{i}(x,y_{1})p_{i'}(x,y_{2})\eta(x,y_{1},y_{2})dxdy_{1}dy_{2}\\
&=&\sum_{i,i'\in M} a_ia_{i'}\int
p_{i}(x,y_{1})p_{i'}(x,y_{2})\eta(x,y_{1},y_{2})dxdy_{1}dy_{2}\\
&=& \int \left(\sum_{i\in M}a_ip_{i}(x,y_{1})\right)\left(\sum_{i'\in M}a_{i'}p_{i'}(x,y_{2})\right)\eta(x,y_{1},y_{2})dxdy_{1}dy_{2}\\
&=&\int S_Mf(x,y_1)S_Mf(x,y_2)\eta(x,y_{1},y_{2})dxdy_{1}dy_{2}.
\end{eqnarray*}
Finally, $\E(\hat{\theta}_{n})-\theta=\E(\hat{\theta}_{n}^1)-\E(\hat{\theta}_{n}^2)-\theta$ and we get the desired bias with (\ref{biais2}).

\subsection{Proof of Theorem \ref{tfq}}

We will write $M$ instead of $M_n$ for readability and denote $m=|M|$. We want to bound the precision of $\hat{\theta}_n$. We first write
$$\E\left(\hat{\theta}_{n}-\iiint
  \eta(x,y_{1},y_{2})f(x,y_{1})f(x,y_{2})dxdy_{1}dy_{2}\right)^{2}=\B^{2}(\hat{\theta}_{n})+\V(\hat{\theta}_{n}).$$
The first term of this decomposition can be easily bounded, since $\hat{\theta}_{n}$ has been built to achieve a bias equal to
\begin{eqnarray*}
\B(\hat{\theta}_{n})&=&-\iiint
[S_{M}f(x,y_{1})-f(x,y_{1})][S_{M}f(x,y_{2})-f(x,y_{2})]\\
&&\eta(x,y_{1},y_{2})dxdy_{1}dy_{2}.
\end{eqnarray*}
We then get the following lemma :
\begin{lemma}\label{biais1}
Assuming the hypotheses of Theorem \ref{tfq} hold, we have
\begin{equation*}
|\B(\hat{\theta}_{n})|\leq\Delta_{Y}\|\eta\|_{\infty}\sup_{i\notin
  M} |c_{i}|^{2}.
\end{equation*}
\end{lemma}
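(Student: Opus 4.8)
The starting point is the exact expression for the bias furnished by Lemma \ref{biais0}, namely
\begin{equation*}
\B(\hat{\theta}_n)=-\iiint [S_Mf(x,y_1)-f(x,y_1)][S_Mf(x,y_2)-f(x,y_2)]\,\eta(x,y_1,y_2)\,dx\,dy_1\,dy_2 .
\end{equation*}
Write $g:=S_Mf-f$ for brevity. The plan is to bound $|\B(\hat{\theta}_n)|$ in three moves: pull $\|\eta\|_{\infty}$ out of the integral, reorganise the triple integral as an integral over $x$ of a square, and finally read off the remaining $L^2$-norm of $g$ from the ellipsoid condition $f\in\mathcal{E}$.

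First I would bound $|\eta(x,y_1,y_2)|\le\|\eta\|_{\infty}$ pointwise and note that the resulting integral factorises over $y_1$ and $y_2$ at fixed $x$, giving
\begin{equation*}
|\B(\hat{\theta}_n)|\le\|\eta\|_{\infty}\int\Big(\int |g(x,y)|\,dy\Big)^2 dx .
\end{equation*}
Next, for each fixed $x$ I would apply the Cauchy--Schwarz inequality on the interval $[c,d]$ to the pair $(|g(x,\cdot)|,\mathbf{1})$, which yields $\big(\int|g(x,y)|\,dy\big)^2\le (d-c)\int g(x,y)^2\,dy=\Delta_Y\int g(x,y)^2\,dy$. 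Integrating in $x$ then produces
\begin{equation*}
|\B(\hat{\theta}_n)|\le\Delta_Y\|\eta\|_{\infty}\,\|g\|_2^2 .
\end{equation*}

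It only remains to control $\|g\|_2^2=\|S_Mf-f\|_2^2$. Since $(p_i)_{i\in D}$ is an orthonormal basis and $S_Mf=\sum_{i\in M}a_ip_i$ with $a_i=\int fp_i$, Parseval's identity gives $\|S_Mf-f\|_2^2=\sum_{i\notin M}|a_i|^2$. Factoring each term as $|a_i|^2=|c_i|^2\,|a_i/c_i|^2$ and invoking $f\in\mathcal{E}$ (so that $\sum_{i\in D}|a_i/c_i|^2\le1$, hence a fortiori $\sum_{i\notin M}|a_i/c_i|^2\le1$) yields
\begin{equation*}
\sum_{i\notin M}|a_i|^2\le\Big(\sup_{i\notin M}|c_i|^2\Big)\sum_{i\notin M}\Big|\frac{a_i}{c_i}\Big|^2\le\sup_{i\notin M}|c_i|^2 .
\end{equation*}
Combining the three displays delivers the announced bound $|\B(\hat{\theta}_n)|\le\Delta_Y\|\eta\|_{\infty}\sup_{i\notin M}|c_i|^2$.

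As for difficulty, there is no genuine obstacle here: the only point requiring a little care is the Cauchy--Schwarz step, which is precisely what converts the $L^1$-type quantity $\int|g(x,y)|\,dy$ produced after bounding $\eta$ into the $L^2$-norm that the ellipsoid hypothesis actually controls, the factor $\Delta_Y$ being the price paid for working on a bounded $y$-interval. Everything else reduces to Parseval's identity and the definition of $\mathcal{E}$.
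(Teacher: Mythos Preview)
Your proof is correct and follows essentially the same route as the paper's own argument: bound $\eta$ by $\|\eta\|_\infty$, recognise the $y_1,y_2$ integral as a square, apply Cauchy--Schwarz on $[c,d]$ to trade the $L^1$ norm for the $L^2$ norm at the cost of $\Delta_Y$, then use Parseval together with the ellipsoid condition $f\in\mathcal{E}$ to conclude. The only cosmetic difference is that the paper refers to the last step as a H\"older inequality, which amounts to the same factorisation $|a_i|^2=|c_i|^2\,|a_i/c_i|^2$ you wrote out explicitly.
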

\begin{proof}
\begin{eqnarray*}
|\B(\hat{\theta}_{n})|&\leq&\|\eta\|_{\infty}\int \left(\int
  |S_{M}f(x,y_{1})-f(x,y_{1})|dy_{1}\right)\\
  &&\left(\int |S_{M}f(x,y_{2})-f(x,y_{2})|dy_{2}\right)dx\\
&\leq&
\|\eta\|_{\infty}\int\left(\int|S_{M}f(x,y)-f(x,y)|dy\right)^{2}dx\\
&&\leq\Delta_{Y}\|\eta\|_{\infty}\iint
(S_{M}f(x,y)-f(x,y))^{2}dxdy\\
&\leq&\Delta_{Y}\|\eta\|_{\infty}\sum_{i\notin
  M} |a_{i}|^{2}\leq\Delta_{Y}\|\eta\|_{\infty}\sup_{i\notin
  M} |c_{i}|^{2}.
\end{eqnarray*}
Indeed, $f\in \mathcal{E}$ and the last inequality follows from H\"older inequality.
\end{proof}

 Bounding the variance of $\hat{\theta}_{n}$ is however less straightforward. Let $A$ and $B$ be the $m\times 1$ vectors with components \begin{eqnarray*}
a_{i}&:=&\iint f(x,y)p_{i}(x,y)dxdy\quad i=1,\ldots,m\\
b_{i}&:=&\iiint
p_{i}(x,y_{1})f(x,y_{2})\eta(x,y_{1},y_{2})dxdy_{1}dy_{2}\\
&=&\iint g(x,y) p_{i}(x,y)dxdy\quad i=1,\ldots,m
\end{eqnarray*}
where $\D{g(x,y)=\int f(x,u)\eta(x,y,u)du}$ for each $i\in M$. $a_{i}$ et $b_{i}$ are the components of $f$ and $g$ onto the $i$th component of the basis. 
Let $Q$ and $R$ be the $m\times 1$ vectors of the centered functions $q_{i}(x,y)=p_{i}(x,y)-a_{i}$ and $\D{r_{i}(x,y)=\int p_{i}(x,u)\eta(x,u,y)du-b_{i}}$ for  $i=1,\ldots,m$. 
Let $C$ be the $m\times m$ matrix of constants $\D{c_{ii'}=\iiint
p_{i}(x,y_{1})p_{i'}(x,y_{2})\eta(x,y_{1},y_{2})dxdy_{1}dy_{2}}$ for $i,i'=1,\ldots,m$. 
Take care that here $c_{ii'}$ is double subscript unlike in the $(c_i)$ sequence appearing in the definition of the ellipsoid $\mathcal{E}$. 
We denote by $U_{n}$ the process 
$\D{U_{n}h=\frac{1}{n(n-1)}\sum_{j\neq
  k=1}^{n}h(X_{j},Y_{j},X_{k},Y_{k})}$ and by $P_{n}$ the empirical measure $\D{P_{n}f=\frac{1}{n}\sum_{j=1}^{n}f(X_{j},Y_{j})}$. 
  With the previous notation,  $\hat{\theta}_{n}$ has the following Hoeffding's decomposition (see chapter 11 of \citet{VV98}):
\begin{equation}
\hat{\theta}_{n}=U_{n}K+P_{n}L+2\transposee{A}B-\transposee{A}CA \label{thetaH}
\end{equation}
where
\begin{eqnarray*}
K(x_1,y_1,x_2,y_2)&=&2\transposee{Q}(x_1,y_1)R(x_2,y_2)-\transposee{Q}(x_1,y_1)CQ(x_2,y_2),\\
L(x_1,y_1)&=&2\transposee{A}R(x_1,y_1)+2\transposee{B}Q(x_1,y_1)-2\transposee{A}CQ(x_1,y_1).
\end{eqnarray*}
Then $\V(\hat{\theta}_{n})=\V(U_{n}K)+\V(P_{n}L)+2\;\C(U_{n}K,P_{n}L)$. We have to get bounds for each of these terms : they are given in the three following lemmas.
\begin{lemma}\label{var1}
Assuming the hypotheses of Theorem \ref{tfq} hold, we have
\begin{equation*}
\V(U_{n}K)\leq \frac{20}{n(n-1)}\|\eta\|_{\infty}^{2}\|f\|_{\infty}^{2}\Delta_{Y}^{2}(m+1).
\end{equation*}
\end{lemma}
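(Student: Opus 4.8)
The plan is to exploit that $U_nK$ is a \emph{canonical} (degenerate) U-statistic of order two, so that its variance is controlled by $\E(K^2)$ alone. First I would check that $\E Q=\E R=0$ componentwise for a single draw $(X,Y)\sim f$: indeed $\E(q_i(X,Y))=\E(p_i(X,Y))-a_i=0$, while relabelling the dummy variables in the definition of $b_i$ together with the symmetry $\eta(x,y_1,y_2)=\eta(x,y_2,y_1)$ gives $\E\big(\int p_i(X,u)\eta(X,u,Y)\,du\big)=b_i$, whence $\E(r_i(X,Y))=0$. Consequently $\E\big(K((X,Y),(x',y'))\big)=2\transposee{(\E Q)}R(x',y')-\transposee{(\E Q)}CQ(x',y')=0$ for every fixed $(x',y')$, and symmetrically when conditioning on the second argument, so $K$ is doubly centered.

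Expanding $\V(U_nK)=\E\big((U_nK)^2\big)$, every product $\E\big(K((X_j,Y_j),(X_k,Y_k))K((X_{j'},Y_{j'}),(X_{k'},Y_{k'}))\big)$ with $\{j,k\}\neq\{j',k'\}$ vanishes by the double centering; for each of the $n(n-1)$ ordered pairs $(j,k)$ only $(j',k')\in\{(j,k),(k,j)\}$ contributes, so after dividing by $n^2(n-1)^2$ and bounding the cross term by Cauchy--Schwarz,
\begin{equation*}
\V(U_nK)\le\frac{2}{n(n-1)}\,\E\big(K((X_1,Y_1),(X_2,Y_2))^2\big).
\end{equation*}
It remains to bound $\E(K^2)$. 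Writing $(2a-b)^2\le 8a^2+2b^2$,
\begin{equation*}
\E(K^2)\le 8\,\E\big((\transposee{Q(X_1,Y_1)}R(X_2,Y_2))^2\big)+2\,\E\big((\transposee{Q(X_1,Y_1)}CQ(X_2,Y_2))^2\big).
\end{equation*}

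The elementary fact that makes both terms tractable is that, by Parseval and boundedness of $f$, for every vector $v$,
\begin{equation*}
\E\big((\transposee{v}Q)^2\big)=\iint\Big(\sum_{i\in M}v_ip_i\Big)^2 f\,dxdy-(\transposee{v}A)^2\le\|f\|_\infty\|v\|^2.
\end{equation*}
Conditioning each term on $(X_2,Y_2)$ and using this with $v=R(X_2,Y_2)$, resp.\ $v=CQ(X_2,Y_2)$ together with $\|CQ\|^2\le\|C\|_{\mathrm{op}}^2\|Q\|^2$, reduces the estimate to bounding $\E\|R(X,Y)\|^2$, $\E\|Q(X,Y)\|^2$ and $\|C\|_{\mathrm{op}}$. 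All three follow from Cauchy--Schwarz and orthonormality of the tensor basis: for unit vectors $u,v$ with $U=\sum_{i\in M}u_ip_i$ and $V=\sum_{i\in M}v_ip_i$ one has $\transposee{u}Cv=\iiint U(x,y_1)V(x,y_2)\eta(x,y_1,y_2)\,dxdy_1dy_2$, and Cauchy--Schwarz first in $(y_1,y_2)$ (one factor $\Delta_Y$ each) then in $x$ yields $\|C\|_{\mathrm{op}}\le\|\eta\|_\infty\Delta_Y$; likewise $\E(q_i^2)\le\iint p_i^2 f\le\|f\|_\infty$ gives $\E\|Q\|^2\le m\|f\|_\infty$, and
\begin{equation*}
\E(r_i^2)\le\iint\Big(\int p_i(x,u)\eta(x,u,y)\,du\Big)^2 f(x,y)\,dxdy\le\|f\|_\infty\|\eta\|_\infty^2\Delta_Y^2
\end{equation*}
by Cauchy--Schwarz in $u$ with $\int p_i(x,u)^2\,du=\alpha_{i_\alpha}(x)^2$ and $\int\alpha_{i_\alpha}^2=1$, so $\E\|R\|^2\le m\|f\|_\infty\|\eta\|_\infty^2\Delta_Y^2$. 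Assembling,
\begin{equation*}
\E(K^2)\le 8\,\|f\|_\infty\cdot m\|f\|_\infty\|\eta\|_\infty^2\Delta_Y^2+2\,\|f\|_\infty\cdot m\|f\|_\infty\|\eta\|_\infty^2\Delta_Y^2=10\,m\,\|\eta\|_\infty^2\|f\|_\infty^2\Delta_Y^2\le 10\,(m+1)\,\|\eta\|_\infty^2\|f\|_\infty^2\Delta_Y^2,
\end{equation*}
and substituting into the variance bound above gives the claim.

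The only genuinely delicate point is the bookkeeping in the repeated Cauchy--Schwarz/orthonormality estimates of the previous paragraph, in particular tracking the two powers of $\Delta_Y$ (one per $y$-integration) in $\|C\|_{\mathrm{op}}\le\|\eta\|_\infty\Delta_Y$ and $\E(r_i^2)\le\|f\|_\infty\|\eta\|_\infty^2\Delta_Y^2$; the degeneracy argument and the reduction to $\E(K^2)$ are standard for canonical U-statistics.
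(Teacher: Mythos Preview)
Your proof is correct and arrives at exactly the same constant $20$ as the paper, and the first half of your argument (degeneracy of $K$, the reduction $\V(U_nK)\le\frac{2}{n(n-1)}\E(K^2)$, and the split $\E(K^2)\le 8\,\E((\transposee{Q}R)^2)+2\,\E((\transposee{Q}CQ)^2)$) is identical to what the paper does.

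Where you diverge is in bounding the two pieces of $\E(K^2)$. The paper expands each piece fully into explicit integrals ($W_1,\ldots,W_4$ for the $\transposee{Q}R$ term, $W_5,W_6,W_7$ for the $\transposee{Q}CQ$ term), discards the nonnegative cross terms, and bounds each surviving integral separately using orthonormality of the $p_i$ and of the $\alpha_{i_\alpha}$; in particular the bound on $W_5$ goes through $\sum_{i,i'}c_{ii'}^2\le m\Delta_Y^2\|\eta\|_\infty^2$, which is the Hilbert--Schmidt norm of $C$. You instead use a single structural observation, $\E((\transposee{v}Q)^2)\le\|f\|_\infty\|v\|^2$, condition on the second argument, and reduce everything to $\E\|R\|^2$, $\E\|Q\|^2$ and the \emph{operator} norm $\|C\|_{\mathrm{op}}\le\|\eta\|_\infty\Delta_Y$. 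This is genuinely cleaner: it avoids the seven $W_j$ computations and makes transparent why only one factor of $m$ appears (it comes solely from summing over $i\in M$ in $\E\|R\|^2$ or $\E\|Q\|^2$). The paper's route is more hands-on but requires no linear-algebra vocabulary and yields the slightly sharper breakdown $m+1$ directly (one $m$ from $W_1,W_5$, one $1$ from $W_4,W_7$), whereas you obtain $10m$ and then pad to $10(m+1)$.
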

\begin{proof}
 Since $U_{n}K$ is centered, $\V(U_{n}K)$ equals
 \begin{eqnarray*}
\E\left(\frac{1}{(n(n-1))^{2}}\sum_{j\neq k=1}^{n}\sum_{j'\neq
    k'=1}^{n}K(X_{j},Y_{j},X_{k},Y_{k})K(X_{j'},Y_{j'},X_{k'},Y_{k'})\right)\\
=\frac{1}{n(n-1)}\E(K^{2}(X_{1},Y_{1},X_{2},Y_{2})+K(X_{1},Y_{1},X_{2},Y_{2})K(X_{2},Y_{2},X_{1},Y_{1})).\\
\end{eqnarray*}
By the Cauchy-Schwarz inequality, 
$$\V(U_{n}K) \leq \frac{2}{n(n-1)}\E(K^{2}(X_{1},Y_{1},X_{2},Y_{2})).$$
Moreover, the inequality $2|\E(XY)|\leq
\E(X^{2})+\E(Y^{2})$ leads to
\begin{eqnarray*}
\E(K^{2}(X_{1},Y_{1},X_{2},Y_{2}))&\leq& 2\left[\E\left((2Q'(X_{1},Y_{1})R(X_{2},Y_{2}))^{2}\right)\right.\\
&&\left.+\E\left((Q'(X_{1},Y_{1})CQ(X_{2},Y_{2}))^{2}\right) \right].
\end{eqnarray*}
We have to bound these two terms. The first one is
$$\E\left((2Q'(X_{1},Y_{1})R(X_{2},Y_{2}))^{2}\right)=4(W_{1}-W_{2}-W_{3}+W_{4})$$
where
\begin{eqnarray*}
W_{1}&=&\int \!\!\!\int \!\!\!\int \!\!\!\int \!\!\!\int \!\!\!\int
\sum_{i,i'}p_{i}(x,y)p_{i'}(x,y)p_{i}(x',u)p_{i'}(x',v\eta(x',u,y')\eta(x',v,y')\\
&&f(x,y)f(x',y')dudvdxdydx'dy'\\
W_{2}&=&\iint\sum_{i,i'}b_{i}b_{i'}p_{i}(x,y)p_{i'}(x,y)f(x,y)dxdy\\
W_{3}&=&\iiiint
\sum_{i,i'}a_{i}a_{i'}p_{i}(x,u)p_{i'}(x,v)\eta(x,u,y)\eta(x,v,y)f(x,y)dxdy\\
W_{4}&=&\sum_{i,i'}a_{i}a_{i'}b_{i}b_{i'}.
\end{eqnarray*}
Straightforward manipulations show that $W_2\geq 0$ and $W_3\geq 0$. This implies that
$$\E\left((2Q'(X_{1},Y_{1})R(X_{2},Y_{2}))^{2}\right)\leq 4(W_{1}+W_{4}).$$
On the one hand,
\begin{eqnarray*}
W_{1}&=&\iiiint
\sum_{i,i'} p_i(x,y)p_{i'}(x,y) \int p_i(x',u)\eta(x',u,y')du\int p_{i'}(x',v)\eta(x',v,y')dvf(x,y)f(x',y')dxdydx'dy'\\
&\leq&\iiiint
\left( \sum_ip_i(x,y)\int p_i(x',u)\eta(x',u,y')du\right)^2f(x,y)f(x',y')dxdydx'dy'\\
&\leq&\|f\|_{\infty}^{2} \iiiint \left( \sum_ip_i(x,y)\int p_i(x',u)\eta(x',u,y')du\right)^2dxdydx'dy' \\
&\leq&\|f\|_{\infty}^{2}\iiiint
\sum_{i,i'} p_i(x,y)p_{i'}(x,y) \int p_i(x',u)\eta(x',u,y')du\int p_{i'}(x',v)\eta(x',v,y')dvdxdydx'dy'\\
&\leq&\|f\|_{\infty}^{2}\sum_{i,i'}  \iint
p_i(x,y)p_{i'}(x,y)dxdy\iint \left(\int p_i(x',u)\eta(x',u,y')du\right)\left(\int p_{i'}(x',v)\eta(x',v,y')dv\right) dx'dy'\\
&\leq&\|f\|_{\infty}^{2}\sum_i \iint \left(\int p_i(x',u)\eta(x',u,y')du\right)^2dx'dy'
\end{eqnarray*}
since the $p_{i}$ are orthonormal. Moreover,
\begin{eqnarray*}
\left(\int p_i(x',u)\eta(x',u,y')du\right)^2 &\leq& \left(\int p_i(x',u)^2du\right)\left(\int \eta(x',u,y')^2du\right)\\
&&\leq \|\eta\|_{\infty}^{2}\Delta_{Y}\int p_i(x',u)^2du,
\end{eqnarray*}
and then
\begin{eqnarray*}
\iint \left(\int p_i(x',u)\eta(x',u,y')du\right)^2dx'dy'&\leq &
 \|\eta\|_{\infty}^{2}\Delta_{Y}^2\iint p_i(x',u)^2dudx'\\
 && \|\eta\|_{\infty}^{2}\Delta_{Y}^2.
\end{eqnarray*}
Finally,
$$W_{1}\leq\|\eta\|_{\infty}^{2}\|f\|_{\infty}^{2}\Delta_{Y}^{2}m. $$
On the other hand,
$$W_{4}=\left(\sum_{i}a_{i}b_{i}\right)^{2}\leq \sum_{i}a_{i}^{2}\sum_{i}b_{i}^{2}\leq\|f\|_{2}^{2}\|g\|_{2}^{2}\leq\|f\|_{\infty}\|g\|_{2}^{2}.$$
By the Cauchy-Scharwz inequality we have $\|g\|_{2}^{2}\leq \|\eta\|_{\infty}^{2}\|f\|_{\infty}\Delta_{Y}^{2}$ and then
$$W_{4}\leq\|\eta\|_{\infty}^{2}\|f\|_{\infty}^{2}\Delta_{Y}^{2}$$
which leads to
$$\E\left((2Q'(X_{1},Y_{1})R(X_{2},Y_{2}))^{2}\right)\leq 4\|\eta\|_{\infty}^{2}\|f\|_{\infty}^{2}\Delta_{Y}^{2}(m+1).$$
Let us bound now the second term $\E\left((Q'(X_{1},Y_{1})CQ(X_{2},Y_{2}))^{2}\right)=W_{5}-2W_{6}+W_{7}$ where
\begin{eqnarray*}
W_{5}&=&\iiiint\sum_{i,i'}\sum_{i_{1},i'_{1}}c_{ii'}c_{i_{1}i'_{1}}p_{i}(x,y)p_{i_{1}}(x,y)p_{i'}(x',y')p_{i'_{1}}(x',y')f(x,y)f(x',y')dxdydx'dy'\\
W_{6}&=&\sum_{i,i'}\sum_{i_{1},i'_{1}}\iint
c_{ii'}c_{i_{1}i'_{1}}a_{i}a_{i_{1}}p_{i'}(x,y)p_{i'_{1}}(x,y)f(x,y)dxdy\\
W_{7}&=&\sum_{i,i'}\sum_{i_{1},i'_{1}}c_{ii'}c_{i_{1}i'_{1}}a_{i}a_{i_{1}}a_{i'}a_{i'_{1}}.
\end{eqnarray*}
Following the previous manipulations, we show that $W_6\geq 0$. Thus,
$$\E\left((Q'(X_{1},Y_{1})CQ(X_{2},Y_{2}))^{2}\right)\leq
W_{5}+W_{7}.$$First, observe that
\begin{eqnarray*}
W_{5}&=&\iiiint\left(\sum_{i,i'}c_{ii'}p_{i}(x,y)p_{i'}(x',y')\right)^{2}f(x,y)f(x',y')dxdydx'dy'\\
&\leq&\|f\|_{\infty}^{2}\iiiint\left(\sum_{i,i'}c_{ii'}p_{i}(x,y)p_{i'}(x',y')\right)^{2}dxdydx'dy'\\
&\leq&\|f\|_{\infty}^{2}\sum_{i,i'}\sum_{i_{1},i'_{1}}c_{ii'}c_{i_{1}i'_{1}}\iiiint
p_{i}(x,y)p_{i_{1}}(x,y)\\
&&p_{i'}(x',y')p_{i'_{1}}(x',y')dxdydx'dy'\\
&\leq&\|f\|_{\infty}^{2}\sum_{i,i'}c_{ii'}^{2}
\end{eqnarray*}
since the $p_{i}$ are orthonormal. Besides,
\begin{eqnarray*}
\sum_{i,i'}c_{ii'}^{2}&=&\iint \sum_{i_{\alpha},i'_{\alpha}}  \alpha_{i_{\alpha}}(x)\alpha_{i'_{\alpha}}(x) \alpha_{i_{\alpha}}(x')\alpha_{i'_{\alpha}}(x')\sum_{i_{\beta},i'_{\beta}} \left(\iint\beta_{i_{\beta}}(y_1)\beta_{i'_{\beta}}(y_2)\eta(x,y_1,y_2)dy_1dy_2\right)\\
&&\left(\iint\beta_{i_{\beta}}(y_1)\beta_{i'_{\beta}}(y_2)\eta(x',y_1,y_2)dy_1dy_2\right)dxdx'\\
&=& \iint \left( \sum_{i_{\alpha}} \alpha_{i_{\alpha}}(x)\alpha_{i_{\alpha}}(x')\right)^2\sum_{i_{\beta},i'_{\beta}} \left(\iint\beta_{i_{\beta}}(y_1)\beta_{i'_{\beta}}(y_2)\eta(x,y_1,y_2)dy_1dy_2\right)\\
&&\left(\iint\beta_{i_{\beta}}(y_1)\beta_{i'_{\beta}}(y_2)\eta(x',y_1,y_2)dy_1dy_2\right)dxdx'.
\end{eqnarray*}
But
\begin{eqnarray*}
 &&\sum_{i_{\beta},i'_{\beta}} \left(\iint\beta_{i_{\beta}}(y_1)\beta_{i'_{\beta}}(y_2)\eta(x,y_1,y_2)dy_1dy_2\right)\\
 &&\left(\iint\beta_{i_{\beta}}(y_1)\beta_{i'_{\beta}}(y_2)\eta(x',y_1,y_2)dy_1dy_2\right)\\
 &=& \sum_{i_{\beta},i'_{\beta}} \iiiint \beta_{i_{\beta}}(y_1)\beta_{i'_{\beta}}(y_2)\eta(x,y_1,y_2) \beta_{i_{\beta}}(y'_1)\beta_{i'_{\beta}}(y'_2)\eta(x',y'_1,y'_2)dy_1dy_2dy'_1dy'_2\\
 &=& \iint \sum_{i_{\beta}} \left(\int\beta_{i_{\beta}}(y_1)\eta(x,y_1,y_2)dy_1\right)\beta_{i_{\beta}}(y'_1)\sum_{i'_{\beta}}  \left(\int\beta_{i'_{\beta}}(y'_2)\eta(x',y'_1,y'_2)dy'_2\right)\beta_{i'_{\beta}}(y_2) dy'_1dy_2\\
 &=& \iint \eta(x,y'_1,y_2) \eta(x',y'_1,y_2) dy'_1dy_2\\
 &\leq& \Delta_Y^2 \|\eta\|_{\infty}^{2}
\end{eqnarray*}
using the fact that $(\beta_i)$ is an orthonormal basis. We then get
\begin{eqnarray*}
\sum_{i,i'}c_{ii'}^{2}&\leq& \Delta_Y^2 \|\eta\|_{\infty}^{2}\iint \left( \sum_{i_{\alpha}} \alpha_{i_{\alpha}}(x)\alpha_{i_{\alpha}}(x')\right)^2dxdx'\\
&\leq& \Delta_Y^2 \|\eta\|_{\infty}^{2} \iint \sum_{i_{\alpha},i'_{\alpha}} \alpha_{i_{\alpha}}(x)\alpha_{i'_{\alpha}}(x) \alpha_{i_{\alpha}}(x')\alpha_{i'_{\alpha}}(x') dxdx'\\
&\leq& \Delta_Y^2 \|\eta\|_{\infty}^{2} \sum_{i_{\alpha},i'_{\alpha}} \left(\int \alpha_{i_{\alpha}}(x)\alpha_{i'_{\alpha}}(x)dx\right)^2\\
&\leq& \Delta_Y^2 \|\eta\|_{\infty}^{2} \sum_{i_{\alpha}} \left(\int \alpha_{i_{\alpha}}(x)^2dx\right)^2\\
&\leq& \Delta_Y^2 \|\eta\|_{\infty}^{2}m
\end{eqnarray*}
since the $\alpha_{i}$ are orthonormal. Finally,
$$W_{5}\leq\|\eta\|_{\infty}^{2}\|f\|_{\infty}^{2}\Delta_{Y}^{2}m.
$$
Besides,
$$W_{7}=\left(\sum_{i,i'}c_{ii'}a_{i}a_{i'}\right)^{2}$$
with
\begin{eqnarray*}
\left|\sum_{i,i'}c_{ii'}a_{i}a_{i'}\right|&\leq&\|\eta\|_{\infty}\iiint
|S_{M}f(x,y_{1})S_{M}f(x,y_{2})|dxdy_{1}dy_{2}\\
&\leq&\|\eta\|_{\infty}\iint
\left(\int|S_{M}f(x,y_{1})S_{M}f(x,y_{2})|dx\right)dy_{1}dy_{2}.
\end{eqnarray*}
By using the Cauchy-Schwarz inequality twice, we get
\begin{eqnarray*}
\left(\sum_{i,i'}c_{ii'}a_{i}a_{i'}\right)^{2}&\leq&\Delta_{Y}^{2}\|\eta\|_{\infty}^{2}\iint
\left(\int|S_{M}f(x,y_{1})S_{M}f(x,y_{2})|dx\right)^{2}dy_{1}dy_{2}\\
&\leq&\Delta_{Y}^{2}\|\eta\|_{\infty}^{2}\iint
\left(\int S_{M}f(u,y_{1})^{2}du\right)\left(\int S_{M}f(v,y_{2})^{2}dv\right)dy_{1}dy_{2}\\
&\leq&\Delta_{Y}^{2}\|\eta\|_{\infty}^{2}\iiiint
S_{M}f(u,y_{1})^{2}S_{M}f(v,y_{2})^{2}dudvdy_{1}dy_{2}\\
&\leq&\Delta_{Y}^{2}\|\eta\|_{\infty}^{2}\left(\iint
S_{M}f(x,y)^{2}dxdy\right)^{2}\\
&\leq&\Delta_{Y}^{2}\|\eta\|_{\infty}^{2}\|f\|_{\infty}^{2}.\\
\end{eqnarray*}
Finally,
$$\E\left((Q'(X_{1},Y_{1})CQ(X_{2},Y_{2}))^{2}\right)\leq
\|\eta\|_{\infty}^{2}\|f\|_{\infty}^{2}\Delta_{Y}^{2}(m+1).$$\\
 Collecting this inequalities, we obtain
$$\V(U_{n}K)\leq \frac{20}{n(n-1)}\|\eta\|_{\infty}^{2}\|f\|_{\infty}^{2}\Delta_{Y}^{2}(m+1)$$
which concludes the proof of Lemma \ref{var1}.
\end{proof}
 Let us now deal with the second term of the Hoeffding's decomposition of $\hat{\theta}_n$ :
\begin{lemma}\label{var2}
Assuming the hypotheses of Theorem \ref{tfq} hold, we have
\begin{equation*}
\V(P_nL)\leq \frac{36}{n}\Delta_{Y}^{2}\|f\|_{\infty}^{2}\|\eta\|_{\infty}^{2}.
\end{equation*}
\end{lemma}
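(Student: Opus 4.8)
The plan is to exploit that $P_nL=\frac1n\sum_{j=1}^{n}L(X_j,Y_j)$ is an empirical mean of i.i.d.\ terms, so that $\V(P_nL)=\frac1n\,\E\big(L(X_1,Y_1)^2\big)$ provided $L$ is centered. First I would check this: each $q_i$ is centered by definition of $a_i$, and each $r_i$ is centered because $\E\big(\int p_i(X,u)\eta(X,u,Y)\,du\big)=\iiint p_i(x,u)\eta(x,u,y)f(x,y)\,dx\,du\,dy=b_i$, where the last equality uses the symmetry $\eta(x,y_1,y_2)=\eta(x,y_2,y_1)$; hence $\E\big(L(X_1,Y_1)\big)=0$. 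Writing $L=2\big(\transposee{A}R+\transposee{B}Q-\transposee{A}CQ\big)$ and using $(a+b+c)^2\le 3(a^2+b^2+c^2)$, it then suffices to show that each of $\E\big((\transposee{A}R)^2\big)$, $\E\big((\transposee{B}Q)^2\big)$ and $\E\big((\transposee{A}CQ)^2\big)$ is at most $\Delta_Y^2\|f\|_\infty^2\|\eta\|_\infty^2$; this yields $\E(L^2)\le 4\cdot 3\cdot 3\,\Delta_Y^2\|f\|_\infty^2\|\eta\|_\infty^2=36\,\Delta_Y^2\|f\|_\infty^2\|\eta\|_\infty^2$, and the lemma follows.

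The key point for the first two terms is that, up to their (zero) mean, they are the projections onto $\mathrm{span}\{p_i:i\in M\}$ of explicit integral functionals, and so only need to be estimated in $L^2(dx\,dy)$. Indeed $\transposee{A}R(x,y)=\int S_Mf(x,u)\eta(x,u,y)\,du-\transposee{A}B$, so by centering and $\E\big(h(X,Y)^2\big)\le\|f\|_\infty\|h\|_2^2$, the term $\E\big((\transposee{A}R)^2\big)$ is at most $\|f\|_\infty\iint\big(\int S_Mf(x,u)\eta(x,u,y)\,du\big)^2dx\,dy$. A Cauchy--Schwarz in $u$ bounds the inner square by $\|\eta\|_\infty^2\Delta_Y\int S_Mf(x,u)^2\,du$, and integrating in $(x,y)$ together with the facts that $S_M$ is a contraction on $L^2(dx\,dy)$ and $\|f\|_2^2\le\|f\|_\infty$ gives $\E\big((\transposee{A}R)^2\big)\le\Delta_Y^2\|f\|_\infty^2\|\eta\|_\infty^2$. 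Likewise $\transposee{B}Q(x,y)=S_Mg(x,y)-\transposee{A}B$ with $g(x,y)=\int f(x,u)\eta(x,y,u)\,du$, whence $\E\big((\transposee{B}Q)^2\big)\le\|f\|_\infty\|S_Mg\|_2^2\le\|f\|_\infty\|g\|_2^2\le\Delta_Y^2\|f\|_\infty^2\|\eta\|_\infty^2$, the last step being the bound $\|g\|_2^2\le\|\eta\|_\infty^2\|f\|_\infty\Delta_Y^2$ already obtained in the proof of Lemma \ref{var1}.

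The remaining term $\transposee{A}CQ$ is the one requiring most care, and I expect the main obstacle to be the purely algebraic identification: one has $\transposee{A}CQ(x,y)=\sum_{i,i'\in M}a_ic_{ii'}p_{i'}(x,y)-\transposee{A}CA$, and, inserting the definition of $c_{ii'}$ and using the self-adjointness of $S_M$, the sum equals $S_M\tilde g$ with $\tilde g(x,y_2):=\int S_Mf(x,y_1)\eta(x,y_1,y_2)\,dy_1$. Once this is seen, the same Cauchy--Schwarz estimate as above gives $\|\tilde g\|_2^2\le\|\eta\|_\infty^2\Delta_Y^2\|S_Mf\|_2^2\le\|\eta\|_\infty^2\Delta_Y^2\|f\|_\infty$, so $\E\big((\transposee{A}CQ)^2\big)\le\|f\|_\infty\|S_M\tilde g\|_2^2\le\Delta_Y^2\|f\|_\infty^2\|\eta\|_\infty^2$. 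Collecting the three bounds and inserting into $\V(P_nL)=\frac1n\E(L^2)$ concludes the proof; none of the estimates go beyond the Cauchy--Schwarz arguments already used for Lemma \ref{var1}, the only genuinely new ingredient being the bookkeeping that re-expresses each piece of $L$ as a centered projected integral functional.
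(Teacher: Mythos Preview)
Your proof is correct and follows essentially the same route as the paper. The paper rewrites $L(X_1,Y_1)=2h+2S_Mg-2S_Mh+\text{const}$ with $h(x,y)=\int S_Mf(x,u)\eta(x,u,y)\,du$ (your $\tilde g$), then applies the same $(a+b+c)^2\le 3(a^2+b^2+c^2)$ inequality and the same Cauchy--Schwarz/$L^2$-contraction bounds on each of the three pieces to reach the constant $36$; your identification $\sum_{i,i'}a_ic_{ii'}p_{i'}=S_M\tilde g$ is exactly the paper's computation $S_Mh=\sum_{i,i'}c_{ii'}a_{i'}p_i$ read through the symmetry of $C$.
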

\begin{proof}
First note that
$$\V(P_{n}L)=\frac{1}{n}\V(L(X_{1},Y_{1})).$$
We can write $L(X_{1},Y_{1})$ as
\begin{eqnarray*}
L(X_{1},Y_{1})&=&2A'R(X_{1},Y_{1})+2B'Q(X_{1},Y_{1})-2A'CQ(X_{1},Y_{1})\\
&=& 2\sum_{i}a_{i}\left(\int
p_{i}(X_{1},u)\eta(X_{1},u,Y_{1})du-b_{i}\right)\\
&&+2\sum_{i}b_{i}(p_{i}(X_{1},Y_{1})-a_{i})-2\sum_{i,i'}c_{ii'}a_{i'}(p_{i}(X_{1},Y_{1})-a_{i})\\
&=& 2\int\sum_{i}a_{i}p_{i}(X_{1},u)\eta(X_{1},u,Y_{1})du +
2\sum_{i}b_{i}p_{i}(X_{1},Y_{1})\\
&&-2\sum_{i,i'}c_{ii'}a_{i'}p_{i}(X_{1},Y_{1})-4A'B+2A'CA\\
&=&2\int
S_{M}f(X_{1},u)\eta(X_{1},u,Y_{1})du+2S_{M}g(X_{1},Y_{1})\\
&&-2\sum_{i,i'}c_{ii'}a_{i'}p_{i}(X_{1},Y_{1})-4A'B+2A'CA.\\
\end{eqnarray*}
Let $\D{h(x,y)=\int S_{M}f(x,u)\eta(x,u,y)du}$, we have
\begin{eqnarray*} 
S_{M}h(z,t)&=&\sum_{i}\left(\iint h(x,y)p_{i}(x,y)dxdy\right)p_{i}(z,t)\\
&=&\sum_{i}\left(\iiint S_{M}f(x,u)\eta(x,u,y)p_{i}(x,y)dudxdy\right)p_{i}(z,t)\\
&=&\sum_{i,i'}\left(\iiint
  a_{i'}p_{i'}(x,u)\eta(x,u,y)p_{i}(x,y)dudxdy\right)p_{i}(z,t)\\
&=&\sum_{i,i'}c_{ii'}a_{i'}p_{i}(z,t)
\end{eqnarray*}
and we can write
$$L(X_{1},Y_{1})=2h(X_{1},Y_{1})+2S_{M}g(X_{1},Y_{1})-2S_{M}h(X_{1},Y_{1})-4A'B+2A'CA.$$
Thus,
\begin{eqnarray*}
\V(L(X_{1},Y_{1}))&=&4\V[h(X_{1},Y_{1})+S_{M}g(X_{1},Y_{1})-S_{M}h(X_{1},Y_{1})]\\
&\leq&4\E[(h(X_{1},Y_{1})+S_{M}g(X_{1},Y_{1})-S_{M}h(X_{1},Y_{1}))^{2}]\\
&\leq&12\E[(h(X_{1},Y_{1}))^{2}+(S_{M}g(X_{1},Y_{1}))^{2}+(S_{M}h(X_{1},Y_{1}))^{2}].
\end{eqnarray*}
Each of these three terms has to be bounded~:
\begin{eqnarray*}
\E((h(X_{1},Y_{1}))^{2})&=&\iint \left(\int
  S_{M}f(x,u)\eta(x,u,y)du\right)^{2}f(x,y)dxdy\\
&\leq&\Delta_{Y}\iiint S_{M}f(x,u)^{2}\eta(x,u,y)^{2}f(x,y)dxdydu\\
&\leq&\Delta_{Y}^{2}\|f\|_{\infty}\|\eta\|_{\infty}^{2}\iint
S_{M}f(x,u)^{2}dxdu\\
&\leq&\Delta_{Y}^{2}\|f\|_{\infty}\|\eta\|_{\infty}^{2}\|S_{M}f\|_{2}^{2}\\
&\leq&\Delta_{Y}^{2}\|f\|_{\infty}\|\eta\|_{\infty}^{2}\|f\|_{2}^{2}\\
&\leq&\Delta_{Y}^{2}\|f\|_{\infty}^{2}\|\eta\|_{\infty}^{2}\\
\end{eqnarray*}
\begin{equation*}
\E((S_{M}g(X_{1},Y_{1}))^{2})\leq \|f\|_{\infty}\|S_{M}g\|_{2}^{2}\leq \|f\|_{\infty}\|g\|_{2}^{2}\leq \Delta_{Y}^{2}\|f\|_{\infty}^{2}\|\eta\|_{\infty}^{2}
\end{equation*}
\begin{equation*}
\E((S_{M}h(X_{1},Y_{1}))^{2})\leq \|f\|_{\infty}\|S_{M}h\|_{2}^{2}\leq \|f\|_{\infty}\|h\|_{2}^{2}\leq\Delta_{Y}^{2}\|f\|_{\infty}^{2}\|\eta\|_{\infty}^{2}
\end{equation*}
from previous calculations. Finally,
$$\V(L(X_{1},Y_{1}))\leq 36\Delta_{Y}^{2}\|f\|_{\infty}^{2}\|\eta\|_{\infty}^{2}.$$
\end{proof}
 The last term of the Hoeffding's decomposition can also be controled :
\begin{lemma}\label{var3}
Assuming the hypotheses of Theorem \ref{tfq} hold, we have
\begin{equation*}
\C(U_{n}K,P_{n}L)=0.
\end{equation*}
\end{lemma}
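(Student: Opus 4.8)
The plan is to exploit the fact that both $U_nK$ and $P_nL$ are centered, so that $\C(U_nK,P_nL)=\E(U_nK\cdot P_nL)$, and then to show that this expectation vanishes term by term. The starting point is the observation that the second-order kernel $K$ appearing in the Hoeffding decomposition (\ref{thetaH}) is completely degenerate. Indeed, since $\E(q_i(X_1,Y_1))=\int fp_i-a_i=0$ and $\E(r_i(X_1,Y_1))=\iint\big(\int p_i(x,u)\eta(x,u,y)du\big)f(x,y)dxdy-b_i=0$, the random vectors $Q(X_1,Y_1)$ and $R(X_1,Y_1)$ are centered; hence $\E\big(K(x_1,y_1,X_2,Y_2)\big)=0$ for every fixed $(x_1,y_1)$ and, by the same computation, $\E\big(K(X_1,Y_1,x_2,y_2)\big)=0$ for every fixed $(x_2,y_2)$. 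In particular $\E(U_nK)=0$, and since $L$ is centered one also has $\E(P_nL)=0$, so the covariance reduces to the expectation of the product.

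Next I would expand the product as a triple sum
\[
\E(U_nK\cdot P_nL)=\frac{1}{n^2(n-1)}\sum_{j\neq k}\sum_{l=1}^{n}\E\big(K(X_j,Y_j,X_k,Y_k)\,L(X_l,Y_l)\big)
\]
and treat the three possible positions of the index $l$ separately. If $l\notin\{j,k\}$, the pair $(X_l,Y_l)$ is independent of $(X_j,Y_j,X_k,Y_k)$, so the expectation factorizes as $\E(K)\,\E(L)=0$. If $l=j$, conditioning on $(X_j,Y_j)$ and using the degeneracy of $K$ in its second pair of arguments gives $\E\big(K(X_j,Y_j,X_k,Y_k)\mid X_j,Y_j\big)=0$, so the term vanishes; the case $l=k$ is handled identically after conditioning on $(X_k,Y_k)$ and invoking degeneracy in the first pair of arguments. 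Summing over all indices, every summand is zero, which yields $\C(U_nK,P_nL)=0$.

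There is no genuine obstacle here: this is the standard orthogonality between components of a Hoeffding decomposition, and the only point requiring attention is the explicit verification that $Q$ and $R$, and hence $K$, are centered, which is immediate from the definitions of $a_i$ and $b_i$. It is worth isolating this degeneracy property once, since the same fact underlies the variance bound for $U_nK$ obtained in Lemma \ref{var1}.
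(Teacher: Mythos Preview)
Your proposal is correct and follows essentially the same route as the paper: both arguments reduce the covariance to $\E(U_nK\cdot P_nL)$, expand the triple sum, discard the terms with $l\notin\{j,k\}$ by independence, and kill the remaining terms $\E\big(K(X_1,Y_1,X_2,Y_2)(L(X_1,Y_1)+L(X_2,Y_2))\big)$ via the degeneracy of $K$ coming from the centering of $Q$ and $R$. Your version is simply more explicit about the conditioning step that the paper condenses into the phrase ``since $K$, $L$, $Q$ and $R$ are centered.''
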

\begin{proof}
Since $U_{n}K$ et $P_{n}L$ are centered, we have
\begin{eqnarray*}
\C(U_{n}K,P_{n}L)&=&\E(U_{n}KP_{n}L)\\
&=&\E\left[\frac{1}{n^{2}(n-1)}\sum_{j\neq
    k=1}^{n}K(X_{j},Y_{j},X_{k},Y_{k})\sum_{i=1}^{n}L(X_{i},Y_{i})\right]\\
&=&\frac{1}{n}\E(K(X_{1},Y_{1},X_{2},Y_{2})(L(X_{1},Y_{1})+L(X_{2},Y_{2})))\\
&=& 0
\end{eqnarray*}
since $K$, $L$, $Q$ and $R$ are centered.
\end{proof}
 The four previous lemmas give the expected result on the precision of $\hat{\theta}_n$ :
\begin{lemma}\label{precision}
Assuming the hypotheses of Theorem \ref{tfq} hold, we have :
\begin{itemize}
\item If $m/n\rightarrow 0$, 
\begin{equation*}
\E(\hat{\theta}_{n}-\theta)^{2}=O\left(\frac{1}{n}\right),
\end{equation*}
\item Otherwise,
\begin{equation*}
\E(\hat{\theta}_{n}-\theta)^{2}\leq \gamma_2(m/n^2)
\end{equation*}
where $\gamma_2$ only depends on $\|f\|_{\infty}$, $\|\eta\|_{\infty}$ and $\Delta_Y$.
\end{itemize}
\end{lemma}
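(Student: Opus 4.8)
The plan is to start from the bias--variance decomposition
\[
\E(\hat{\theta}_{n}-\theta)^{2}=\B^{2}(\hat{\theta}_{n})+\V(\hat{\theta}_{n}),
\]
and then collect the estimates already obtained in Lemmas \ref{biais0}, \ref{biais1}, \ref{var1}, \ref{var2} and \ref{var3}. For the squared bias, Lemma \ref{biais0} gives the exact value of $\B(\hat{\theta}_{n})$ and Lemma \ref{biais1} yields $|\B(\hat{\theta}_{n})|\leq\Delta_{Y}\|\eta\|_{\infty}\sup_{i\notin M}|c_{i}|^{2}$, hence $\B^{2}(\hat{\theta}_{n})\leq\Delta_{Y}^{2}\|\eta\|_{\infty}^{2}\big(\sup_{i\notin M_{n}}|c_{i}|^{2}\big)^{2}$. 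Invoking assumption A1, $\big(\sup_{i\notin M_{n}}|c_{i}|^{2}\big)^{2}\approx m/n^{2}$, so that $\B^{2}(\hat{\theta}_{n})\leq \gamma\, m/n^{2}$ for a constant $\gamma$ depending only on $\|\eta\|_{\infty}$, $\Delta_{Y}$ and the proportionality constant in A1.

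For the variance, I would use the Hoeffding decomposition (\ref{thetaH}): the term $2\transposee{A}B-\transposee{A}CA$ is deterministic (it only involves $f$ and $\eta$, not the sample), so it contributes nothing and
\[
\V(\hat{\theta}_{n})=\V(U_{n}K)+\V(P_{n}L)+2\,\C(U_{n}K,P_{n}L).
\]
Lemma \ref{var1} bounds $\V(U_{n}K)$ by $\frac{20}{n(n-1)}\|\eta\|_{\infty}^{2}\|f\|_{\infty}^{2}\Delta_{Y}^{2}(m+1)$, which is $O(m/n^{2})$ since $m+1\leq 2m$; Lemma \ref{var2} bounds $\V(P_{n}L)$ by $\frac{36}{n}\Delta_{Y}^{2}\|f\|_{\infty}^{2}\|\eta\|_{\infty}^{2}=O(1/n)$; and Lemma \ref{var3} shows the covariance term vanishes. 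Altogether there exist constants $\gamma'$ and $\gamma''$, depending only on $\|f\|_{\infty}$, $\|\eta\|_{\infty}$ and $\Delta_{Y}$, such that
\[
\E(\hat{\theta}_{n}-\theta)^{2}\leq \gamma'\,\frac{m}{n^{2}}+\gamma''\,\frac{1}{n}.
\]

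It then only remains to split into the two regimes. If $m/n\rightarrow 0$, then $m/n^{2}=(m/n)(1/n)=o(1/n)$, the first term is negligible, and $\E(\hat{\theta}_{n}-\theta)^{2}=O(1/n)$. Otherwise $m$ is, for $n$ large, of order at least $n$, so that $1/n$ is bounded by a constant times $m/n^{2}$ and $\E(\hat{\theta}_{n}-\theta)^{2}\leq\gamma_{2}\,m/n^{2}$ with $\gamma_{2}$ depending only on $\|f\|_{\infty}$, $\|\eta\|_{\infty}$ and $\Delta_{Y}$. I do not expect a real obstacle at this stage: the substantive work — especially the control of the variance of the degenerate $U$-statistic part $U_{n}K$ in Lemma \ref{var1} — has already been carried out, and what is left is essentially bookkeeping. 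The only mild point of care is to phrase the second regime so that the $O(1/n)$ contribution is absorbed into $m/n^{2}$, which is immediate as soon as $m$ and $n$ are of comparable order.
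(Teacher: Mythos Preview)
Your proposal is correct and follows essentially the same route as the paper: bias--variance decomposition, the bias bound from Lemma~\ref{biais1} together with assumption~A1, and the variance bound obtained by summing the contributions of Lemmas~\ref{var1}, \ref{var2} and \ref{var3}, leading to $\E(\hat{\theta}_{n}-\theta)^{2}\leq \gamma'\,m/n^{2}+\gamma''/n$ and then the case split. Your write-up is in fact slightly more explicit than the paper's in justifying why the $1/n$ term is absorbed into $m/n^{2}$ in the second regime.
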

\begin{proof}
Lemmas \ref{var1}, \ref{var2} and \ref{var3} imply
$$\V(\hat{\theta}_{n})\leq
\frac{20}{n(n-1)}\Delta_{Y}^{2}\|f\|_{\infty}^{2}\|\eta\|_{\infty}^{2}(m+1)+\frac{36}{n}\Delta_{Y}^{2}\|f\|_{\infty}^{2}\|\eta\|_{\infty}^{2}.$$
Finally, for $n$ large enough and a constant $\gamma\in \R$,  
$$\V(\hat{\theta}_{n})\leq \gamma
\Delta_{Y}^{2}\|f\|_{\infty}^{2}\|\eta\|_{\infty}^{2}\left(\frac{m}{n^{2}}+\frac{1}{n}\right).$$
Lemma \ref{biais1} gives 
$$\B^{2}(\hat{\theta}_{n})\leq
\Delta_{Y}^{2}\|\eta\|_{\infty}^{2}\left(\sup_{i\notin M}|c_{i}|^{2}\right)^{2}$$
and by assumption $\left(\sup_{i\notin M}|c_{i}|^{2}\right)^{2}\approx m/n^{2}$. If $m/n\rightarrow 0$, then $\E(\hat{\theta}_{n}-\theta)^{2}=O(\frac{1}{n})$. 
Otherwise $\E(\hat{\theta}_{n}-\theta)^{2}\leq \gamma_2(m/n^2)$ where $\gamma_2$ only depends on $\|f\|_{\infty}$, $\|\eta\|_{\infty}$ and $\Delta_Y$.
\end{proof}
 The lemma we just proved gives the result of Theorem \ref{tfq} when $m/n$ does not converge to $0$. 
 Let us now study more precisely the semiparametric case, that is when $\E(\hat{\theta}_{n}-\theta)^{2}=O(\frac{1}{n})$, to prove the asymptotic normality (\ref{na}) and the bound in (\ref{ea}). 
 We have $$\sqrt{n}\left(\hat{\theta}_{n}-\theta\right)=\sqrt{n}(U_{n}K)+\sqrt{n}(P_{n}L)+\sqrt{n}(2A'B-A'CA).$$ 
We will study the asymptotic behavior of each of these three terms. The first one is easily treated :
\begin{lemma}\label{var4}
Assuming the hypotheses of Theorem \ref{tfq} hold, we have
\begin{equation*}
\sqrt{n}U_nK\rightarrow 0
\end{equation*}
in probability when $n\rightarrow \infty$ if $m/n\rightarrow 0$.
\end{lemma}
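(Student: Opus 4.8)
The plan is to deduce the result directly from the variance bound already established in Lemma \ref{var1}, since $U_nK$ has mean zero. First I would recall that in Hoeffding's decomposition \eqref{thetaH} the kernel $K$ is built from the centered vectors $Q$ and $R$: writing $q_i(x,y)=p_i(x,y)-a_i$ and $r_i(x,y)=\int p_i(x,u)\eta(x,u,y)du-b_i$, each coordinate satisfies $\E(q_i(X_1,Y_1))=\E(r_i(X_1,Y_1))=0$, hence $\E(K(X_1,Y_1,X_2,Y_2))=0$ by independence of the two pairs, and therefore $\E(U_nK)=0$.

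Next I would apply Chebyshev's inequality: for any $\varepsilon>0$,
\begin{equation*}
\proba\left(\sqrt{n}\,|U_nK|>\varepsilon\right)\leq \frac{n\,\V(U_nK)}{\varepsilon^2}.
\end{equation*}
By Lemma \ref{var1}, $\V(U_nK)\leq \frac{20}{n(n-1)}\|\eta\|_\infty^2\|f\|_\infty^2\Delta_Y^2(m+1)$, so that
\begin{equation*}
n\,\V(U_nK)\leq \frac{20(m+1)}{n-1}\,\|\eta\|_\infty^2\|f\|_\infty^2\Delta_Y^2.
\end{equation*}

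Finally, under the assumption $m/n\to 0$ one has $\frac{m+1}{n-1}\to 0$, so the right-hand side tends to $0$ as $n\to\infty$; in fact this shows the stronger statement that $\sqrt{n}\,U_nK\to 0$ in $L^2$, which in particular gives convergence in probability. There is no real obstacle here: the only nontrivial input is the combinatorial/orthogonality computation bounding $\E(K^2)$ that was carried out in the proof of Lemma \ref{var1}, and the present lemma is simply the observation that this bound, multiplied by $n$, is controlled by $m/n$.
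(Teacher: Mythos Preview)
Your proposal is correct and is exactly the paper's argument: the paper simply notes that $\V(\sqrt{n}\,U_nK)\leq \frac{20}{n-1}\|\eta\|_\infty^2\|f\|_\infty^2\Delta_Y^2(m+1)$ from Lemma~\ref{var1} and concludes convergence in probability when $m/n\to 0$. You have just made the use of Chebyshev (equivalently, $L^2$-convergence) and the centering $\E(U_nK)=0$ explicit, which is fine.
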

\begin{proof}
Since $\D{\V(\sqrt{n}U_{n}K)\leq \frac{20}{(n-1)}\|\eta\|_{\infty}^{2}\|f\|_{\infty}^{2}\Delta_{Y}^{2}(m+1)}$, $\sqrt{n}U_nK$ converges to $0$ in probability when $n\rightarrow \infty$ if $m/n\rightarrow 0$.
\end{proof}
 The random variable $P_nL$ will be the most important term for the central limit theorem. Before studying its asymptotic normality, we need the following lemma concerning the asymptotic variance of $\sqrt{n}(P_{n}L)$ :
\begin{lemma}\label{var5}
Assuming the hypotheses of Theorem \ref{tfq} hold, we have 
\begin{equation*}
n\V(P_nL)\rightarrow \Lambda(f,\eta)
\end{equation*}
where
\begin{equation*}
\Lambda(f,\eta)=4  \left[ \iint g(x,y)^2f(x,y)dxdy	
-\left( \iint g(x,y)f(x,y)dxdy\right)^2\right].
\end{equation*}
\end{lemma}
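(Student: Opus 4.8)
The plan is to start from the representation of $L$ obtained in the proof of Lemma \ref{var2}, namely
\begin{equation*}
L(X_{1},Y_{1})=2h(X_{1},Y_{1})+2S_{M_n}g(X_{1},Y_{1})-2S_{M_n}h(X_{1},Y_{1})-4\transposee{A}B+2\transposee{A}CA,
\end{equation*}
with $h(x,y)=\int S_{M_n}f(x,u)\eta(x,u,y)du$. Since the $L(X_{j},Y_{j})$ are i.i.d.\ we have $n\V(P_{n}L)=\V(L(X_{1},Y_{1}))$, and since $\transposee{A}B$ and $\transposee{A}CA$ are deterministic this equals $4\,\V[\zeta_{n}(X_{1},Y_{1})]$ where $\zeta_{n}:=h+S_{M_n}g-S_{M_n}h$. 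It therefore suffices to show that $\V[\zeta_{n}(X_{1},Y_{1})]$ converges to $\V[g(X_{1},Y_{1})]=\iint g(x,y)^{2}f(x,y)dxdy-\big(\iint g(x,y)f(x,y)dxdy\big)^{2}$.

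The first step is to prove that $h\to g$, $S_{M_n}g\to g$ and $S_{M_n}h\to g$ in $L^{2}(dxdy)$, hence $\zeta_{n}\to g$ in $L^{2}(dxdy)$. The convergence $\|S_{M_n}g-g\|_{2}\to 0$ is exactly the second part of assumption A1 applied to $t=g$, which lies in $L^{2}(dxdy)$ since $\|g\|_{2}^{2}\leq\|\eta\|_{\infty}^{2}\|f\|_{\infty}\Delta_{Y}^{2}$ as already noted. For $h$, the symmetry of $\eta$ in its last two arguments gives $g(x,y)=\int f(x,u)\eta(x,u,y)du$, so $h(x,y)-g(x,y)=\int\big(S_{M_n}f-f\big)(x,u)\,\eta(x,u,y)du$, and Cauchy--Schwarz in $u$ together with $\|\eta\|_{\infty}<\infty$ yields $\|h-g\|_{2}^{2}\leq\Delta_{Y}^{2}\|\eta\|_{\infty}^{2}\|S_{M_n}f-f\|_{2}^{2}$, which tends to $0$ by A1. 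Finally, $S_{M_n}$ being an orthogonal projection (hence a contraction) on $L^{2}(dxdy)$, $\|S_{M_n}h-g\|_{2}\leq\|S_{M_n}(h-g)\|_{2}+\|S_{M_n}g-g\|_{2}\leq\|h-g\|_{2}+\|S_{M_n}g-g\|_{2}\to 0$.

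The second step transfers this $L^{2}(dxdy)$ convergence to convergence of the first two moments of $\zeta_{n}(X_{1},Y_{1})$, which is licit because $f$ is bounded (and $\int f=1$, so $f\in L^{2}(dxdy)$ with $\|f\|_{2}^{2}\leq\|f\|_{\infty}$). By Cauchy--Schwarz, $\big|\iint(\zeta_{n}-g)f\,dxdy\big|\leq\|\zeta_{n}-g\|_{2}\|f\|_{2}\to 0$, so $\E[\zeta_{n}(X_{1},Y_{1})]\to\E[g(X_{1},Y_{1})]$. Likewise, $\big|\iint(\zeta_{n}^{2}-g^{2})f\,dxdy\big|\leq\|f\|_{\infty}\,\|\zeta_{n}-g\|_{2}\,\|\zeta_{n}+g\|_{2}$, and $\|\zeta_{n}+g\|_{2}\leq 2\|g\|_{2}+\|\zeta_{n}-g\|_{2}$ stays bounded, so $\E[\zeta_{n}^{2}(X_{1},Y_{1})]\to\E[g^{2}(X_{1},Y_{1})]$. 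Combining the two limits, $\V[\zeta_{n}(X_{1},Y_{1})]\to\V[g(X_{1},Y_{1})]$, and multiplying by $4$ gives $n\V(P_{n}L)\to\Lambda(f,\eta)$.

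The only genuine subtlety, which I expect to be the main obstacle, is that $h$ and $S_{M_n}h$ both depend on $n$ through $M_n$, so A1 cannot be applied to them directly; the argument routes through the fixed limit $g$, exploiting that $\|h-g\|_{2}$ is controlled by $\|S_{M_n}f-f\|_{2}$ (boundedness of $\eta$ being crucial) and that $S_{M_n}$ is a contraction. The remainder is the routine Cauchy--Schwarz passage from convergence in $L^{2}(dxdy)$ to convergence of moments under the law of $(X_{1},Y_{1})$, valid because $f$ is bounded.
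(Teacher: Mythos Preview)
Your proof is correct and follows essentially the same route as the paper's: both start from the representation $L=2h+2S_{M_n}g-2S_{M_n}h+\text{const}$, show that each of $h$, $S_{M_n}g$, $S_{M_n}h$ converges to $g$ in $L^{2}(dxdy)$ (using A1 for $S_{M_n}g$, the estimate $\|h-g\|_{2}\leq\Delta_{Y}\|\eta\|_{\infty}\|S_{M_n}f-f\|_{2}$ for $h$, and the contraction property of $S_{M_n}$ for $S_{M_n}h$), and then pass to convergence of first and second moments via the boundedness of $f$.

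The only organizational difference is that the paper expands $\V[\zeta_{n}]$ into the nine covariances $\C(A_{i},A_{j})$ and bounds each one separately, which yields along the way the explicit inequality~(\ref{variances}); that inequality is not needed for Lemma~\ref{var5} itself but is invoked again at the very end of the proof of Theorem~\ref{tfq} to obtain the quantitative bound~(\ref{ea}). Your direct treatment of $\zeta_{n}$ is cleaner for the lemma as stated, and the same explicit control of $|n\V(P_{n}L)-\Lambda(f,\eta)|$ by $\|S_{M_n}f-f\|_{2}+\|S_{M_n}g-g\|_{2}$ can be read off from your estimates if you need it later.
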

\begin{proof}
We proved in Lemma \ref{var2} that
\begin{eqnarray*} 
\V(L(X_{1},Y_{1}))&=&4\V[h(X_{1},Y_{1})+S_{M}g(X_{1},Y_{1})-S_{M}h(X_{1},Y_{1})]\\
&=&4\V[A_1+A_2+A_3]\\
&=&4\sum_{i,j=1}^3\C(A_i,A_j).
\end{eqnarray*}
We will show that $\forall i,j\in \{1,2,3\}^2$, we have
\begin{eqnarray} 
\left| \C(A_i,A_j)-\epsilon_{ij} \left[ \iint g(x,y)^2f(x,y)dxdy	
-\left( \iint g(x,y)f(x,y)dxdy\right)^2\right]\right| \notag\\
\leq \gamma\left[ \| S_Mf-f\|_2 + \| S_Mg-g\|_2\right] \label{variances}
\end{eqnarray}
where $\epsilon_{ij}=-1$ if $i=3$ or $j=3$ and $i\neq j$ and $\epsilon_{ij}=1$ otherwise, and where $\gamma$ depends only on  $\|f\|_{\infty}$, $\|\eta\|_{\infty}$ and $\Delta_Y$.\\
We shall give the details only for the case $i=j=3$ since the calculations are similar for the other configurations. We have
$$\V(A_3)=\iint S_M^2[h(x,y)]f(x,y)dxdy-\left(\iint S_M[h(x,y)]f(x,y)dxdy\right)^2$$
We first study the quantity 
$$\left|\iint S_M^2[h(x,y)]f(x,y)dxdy-\iint g(x,y)^2f(x,y)dxdy\right|.$$
It is bounded by prout prout prout prout prout prout prout prout prout prout
\begin{eqnarray*}
&&\iint \left|S_M^2[h(x,y)]f(x,y)-S_M^2[g(x,y)]f(x,y)\right|dxdy\\
&&+\iint \left|S_M^2[g(x,y)]f(x,y)-g(x,y)^2f(x,y)\right|dxdy\\
&&\leq \|f\|_{\infty} \| S_Mh+S_Mg\|_2 \|S_Mh-S_Mg\|_2 + \|f\|_{\infty} \| S_Mg+g\|_2 \|S_Mg-g\|_2.
\end{eqnarray*}
Using the fact that $S_M$ is a projection, this sum is bounded by
\begin{eqnarray*}
~&&\|f\|_{\infty} \| h+g\|_2 \| h-g\|_2 + 2\|f\|_{\infty} \| g\|_2 \|S_Mg-g\|_2\\
~&&\leq \|f\|_{\infty} (\| h\|_2+\|g\|_2) \| h-g\|_2 + 2\|f\|_{\infty} \| g\|_2 \|S_Mg-g\|_2.
\end{eqnarray*}
We saw previously that $\|g\|_2\leq \Delta_Y \|f\|_{\infty}^{1/2} \|\eta\|_{\infty}$ and $\| h\|_2 \leq \Delta_Y \|f\|_{\infty}^{1/2} \|\eta\|_{\infty}$. The sum is then bounded by
$$2\Delta_Y\|f\|_{\infty}^{3/2} \|\eta\|_{\infty}  \| h-g\|_2 + 2\Delta_Y\|f\|_{\infty}^{3/2}\|\eta\|_{\infty}  \|S_Mg-g\|_2$$
We now have to deal with $ \| h-g\|_2$:
\begin{eqnarray*}
 \| h-g\|_2^2&=& \iint \left( \int \left(S_Mf(x,u)-f(x,u)\right)\eta(x,u,y)du\right)^2dxdy\\
 &\leq& \iint \left(\int (S_Mf(x,u)-f(x,u))^2du\right)\left(\int\eta(x,u,y)^2du\right)dxdy\\
 &\leq& \Delta_Y^2 \|\eta\|_{\infty}^{2} \|S_Mf-f\|_2^2.
\end{eqnarray*}
Finally, the sum is bounded by
$$2\Delta_Y\|f\|_{\infty}^{3/2} \|\eta\|_{\infty}  \left( \Delta_Y\|\eta\|_{\infty}\|S_Mf-f\|_2+  \|S_Mg-g\|_2\right).$$
Let us now study the second quantity
$$\left|\left(\iint S_M[h(x,y)]f(x,y)dxdy\right)^2-\left(\iint g(x,y)f(x,y)dxdy\right)^2\right|.$$
It is equal to 
\begin{eqnarray*}
\left|\left( \iint (S_M[h(x,y)]+g(x,y))f(x,y)dxdy\right)\right.\\
\left.\left( \iint (S_M[h(x,y)]-g(x,y))f(x,y)dxdy\right)\right|.
\end{eqnarray*}
By using the Cauchy-Schwarz inequality, it is bounded by
\begin{eqnarray*}
~&&\|f\|_2 \| S_Mh+g\|_2\|f\|_2 \| S_Mh-g\|_2\\
~&&\leq \|f\|_2^2 (\|h\|_2+\|g\|_2) (\| S_Mh-S_Mg\|_2+\| S_Mg-g\|_2)\\
~&&\leq 2\Delta_Y\|f\|_{\infty} ^{3/2} \|\eta\|_{\infty} (\| h-g\|_2+\| S_Mg-g\|_2)\\
~&&\leq 2\Delta_Y\|f\|_{\infty} ^{3/2} \|\eta\|_{\infty} \left(\Delta_Y\|\eta\|_{\infty}\|S_Mf-f\|_2 + \|S_Mg-g\|_2\right)
\end{eqnarray*}
by using the previous calculations. Collecting the two inequalities gives  (\ref{variances}) for $i=j=3$.\\
Finally, since by assumption $\forall t\in L^2(d\mu)$, $\|S_Mt-t\|_2 \rightarrow 0$ when $n\rightarrow\infty$, a direct consequence of (\ref{variances}) is that
\begin{eqnarray*}
&&\lim_{n\rightarrow\infty} \V(L(X_1,Y_1))\\
&& = 4  \left[ \iint g(x,y)^2f(x,y)dxdy	
-\left( \iint g(x,y)f(x,y)dxdy\right)^2\right]\\
&&= \Lambda(f,\eta).
\end{eqnarray*}
We then conclude by noting that $\V(\sqrt{n}(P_nL))=\V(L(X_1,Y_1))$.
\end{proof}
 We can now study the convergence of $\sqrt{n}(P_nL)$, which is given in the following lemma:
\begin{lemma}
Assuming the hypotheses of Theorem \ref{tfq} hold, we have
\begin{equation*}
\sqrt{n}P_nL \overset{\mathcal{L}}{\rightarrow} \N(0,\Lambda(f,\eta)).
\end{equation*}
\end{lemma}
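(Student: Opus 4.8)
The plan is to view $\sqrt{n}\,P_nL=\frac{1}{\sqrt{n}}\sum_{j=1}^n L_n(X_j,Y_j)$ as a normalized sum over a triangular array whose rows are i.i.d.\ --- the function $L=L_n$ depends on $n$ through $M=M_n$ --- and to reduce it to the ordinary central limit theorem by replacing $L_n$ by its $L^2$-limit. That limit is
\[
L_\infty(x,y):=2\big(g(x,y)-\theta\big),\qquad \theta=\iiint \eta(x,y_1,y_2)f(x,y_1)f(x,y_2)\,dxdy_1dy_2 .
\]

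First I would show $\E\big((L_n-L_\infty)(X_1,Y_1)\big)^2\to 0$. Starting from the decomposition established in the proof of Lemma~\ref{var2},
\[
L_n(x,y)=2h_n(x,y)+2S_{M_n}g(x,y)-2S_{M_n}h_n(x,y)-4\,\transposee{A}B+2\,\transposee{A}CA,\qquad h_n(x,y)=\int S_{M_n}f(x,u)\eta(x,u,y)\,du ,
\]
I would use the bound $\|h_n-g\|_2\leq \Delta_Y\|\eta\|_\infty\|S_{M_n}f-f\|_2$ obtained in the proof of Lemma~\ref{var5} (which tends to $0$ by A1), together with $\|S_{M_n}g-g\|_2\to 0$ and $\|S_{M_n}h_n-g\|_2\leq\|h_n-g\|_2+\|S_{M_n}g-g\|_2\to 0$ (as $S_{M_n}$ is an $L^2$-contraction), to conclude that the functional part of $L_n$ converges to $2g+2g-2g=2g$ in $L^2(dxdy)$. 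For the two scalars, $\transposee{A}B=\sum_{i\in M_n}a_ib_i=\iint S_{M_n}f(x,y)g(x,y)\,dxdy\to\iint f(x,y)g(x,y)\,dxdy=\theta$ and $\transposee{A}CA=\iiint S_{M_n}f(x,y_1)S_{M_n}f(x,y_2)\eta(x,y_1,y_2)\,dxdy_1dy_2\to\theta$, the latter handled by Cauchy--Schwarz together with $\|S_{M_n}f\|_2\leq\|f\|_2$ and $\|S_{M_n}f-f\|_2\to 0$, exactly as in the $W_7$-type estimate of Lemma~\ref{var1}. Hence $L_n\to L_\infty$ in $L^2(dxdy)$, and because $\|f\|_\infty<\infty$ this transfers to convergence in $L^2$ under the law of $(X_1,Y_1)$, which is the claim.

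Next I would observe that both summands are centered. Indeed $\E L_n(X_1,Y_1)=0$ because the vectors $Q$ and $R$ are centered (as already exploited in Lemma~\ref{var3}), and $\E L_\infty(X_1,Y_1)=2\big(\E g(X_1,Y_1)-\theta\big)=0$ since $\E g(X_1,Y_1)=\iiint f(x,u)\eta(x,y,u)f(x,y)\,dudxdy=\theta$ after relabeling the dummy variables. Therefore $\sqrt{n}\,P_n(L_n-L_\infty)$ is exactly centered with variance $\E\big((L_n-L_\infty)(X_1,Y_1)\big)^2\to 0$, so it converges to $0$ in $L^2$, hence in probability. On the other hand $\sqrt{n}\,P_nL_\infty=\frac{1}{\sqrt{n}}\sum_{j=1}^n 2\big(g(X_j,Y_j)-\theta\big)$ is a normalized sum of i.i.d.\ centered random variables that are bounded ($|g|\leq \Delta_Y\|f\|_\infty\|\eta\|_\infty$), hence square integrable, with common variance $4\V\big(g(X_1,Y_1)\big)=4\big[\iint g(x,y)^2f(x,y)\,dxdy-(\iint g(x,y)f(x,y)\,dxdy)^2\big]=\Lambda(f,\eta)$; the classical central limit theorem gives $\sqrt{n}\,P_nL_\infty\overset{\mathcal{L}}{\rightarrow}\N(0,\Lambda(f,\eta))$. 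Slutsky's lemma then yields $\sqrt{n}\,P_nL=\sqrt{n}\,P_nL_\infty+\sqrt{n}\,P_n(L_n-L_\infty)\overset{\mathcal{L}}{\rightarrow}\N(0,\Lambda(f,\eta))$.

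The hard part is essentially bookkeeping: tracking the $L^2$-convergence of each constituent of $L_n$ --- in particular the projected function $S_{M_n}h_n$ and the quadratic scalars $\transposee{A}B$ and $\transposee{A}CA$ --- but every ingredient is already contained in the proofs of Lemmas~\ref{var1}, \ref{var2} and \ref{var5}. Alternatively, one could bypass Slutsky and apply the Lindeberg--Feller theorem directly to the triangular array: the $L^2$-convergence $L_n(X_1,Y_1)\to L_\infty(X_1,Y_1)$ forces $\{L_n(X_1,Y_1)^2\}_n$ to be uniformly integrable, which combined with $\E L_n(X_1,Y_1)^2\to\Lambda(f,\eta)$ (Lemma~\ref{var5}) gives the Lindeberg condition and hence the conclusion.
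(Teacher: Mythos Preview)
Your proof is correct and follows the same strategy as the paper: both reduce to the classical CLT applied to $\sqrt{n}\,P_n(2g-2\theta)$ and show that the remainder $\sqrt{n}\,P_n(L-L_\infty)$ vanishes in $L^2$. The only difference is in the execution of the remainder step---the paper expands $\E(R^2)=n\V(P_nL)+n\V(P_n(2g))-2n\C(P_nL,P_n(2g))$ and checks that each piece tends to $\Lambda(f,\eta)$, whereas you establish $L_n\to L_\infty$ in $L^2(dxdy)$ term by term, which is slightly more direct but amounts to the same computation.
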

\begin{proof}
We first note that
$$\sqrt{n}\left(P_n(2g)-2\iint g(x,y)f(x,y)dxdy\right)\rightarrow \N(0,\Lambda(f,\eta))$$
where $\D{g(x,y)=\int \eta(x,y,u)f(x,u)du}$.\\
It is then sufficient to show that the expectation of the square of
$$\D{R=\sqrt{n}\left[P_nL-\left(P_n(2g)-2 \iint g(x,y)f(x,y)dxdy\right)\right]}$$
converges to $0$. We have
\begin{eqnarray*}
\E(R^2)&=& \V(R)\\
&=& n\V(P_nL)+n\V(P_n(2g))-2n\C(P_nL,P_n(2g))
\end{eqnarray*}
We know that $n\V(P_n(2g))\rightarrow \Lambda(f,\eta)$ and Lemma  \ref{var5} shows that $n\V(P_nL)\rightarrow \Lambda(f,\eta)$. Then, we just have to prove that
$$\lim_{n\rightarrow\infty} n\C(P_nL,P_n(2g)) = \Lambda(f,\eta).$$
We have
$$n\C(P_nL,P_n(2g)) = \E(2L(X_1,Y_1)g(X_1,Y_1))$$
because $L$ is centered. Since
$$L(X_1,Y_1)=2h(X_1,Y_1)+2S_Mg(X_1,Y_1)-2S_Mh(X_1,Y_1)-4A'B+2A'CA,$$
we get
\begin{eqnarray*}
&&n\C(P_nL,P_n(2g)) = 4\iint h(x,y)g(x,y)f(x,y)dxdy\\
&& + 4\iint S_Mg(x,y)g(x,y)f(x,y)dxdy\\
&& -4\iint S_Mh(x,y)g(x,y)f(x,y)dxdy -8 \sum_i a_ib_i \iint g(x,y)f(x,y)dxdy\\
&&+ 4 A'CA \iint g(x,y)f(x,y)dxdy 
\end{eqnarray*}
which converges to $\D{4\left[ \iint g(x,y)^2f(x,y)dxdy	
-\left( \iint g(x,y)f(x,y)dudxdy\right)^2\right]}$ which is equal to $\Lambda(f,\eta)$. We finally deduce that
$$\sqrt{n}P_nL\rightarrow \N(0,\Lambda(f,\eta))$$
in distribution.
\end{proof}
 In order to prove the asymptotic normality of $\hat{\theta}_n$, the last step is to control the remainder term in the Hoeffding's decomposition:
\begin{lemma}\label{var6}
Assuming the hypotheses of Theorem \ref{tfq} hold, we have
\begin{equation*}
\sqrt{n}(2A'B-A'CA-\theta)\rightarrow 0.
\end{equation*}
\end{lemma}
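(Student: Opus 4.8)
\noindent\textbf{Proof proposal for Lemma~\ref{var6}.}
The plan is to recognise that $2\transposee{A}B-\transposee{A}CA$ is nothing but $\E(\hat\theta_n)$, so that $2\transposee{A}B-\transposee{A}CA-\theta$ is exactly the projection bias already controlled in Lemmas~\ref{biais0} and~\ref{biais1}, and then to read off the rate from assumption A1.

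First I would take expectations in the Hoeffding decomposition (\ref{thetaH}). Since $Q$, $R$, $K$ and $L$ are centered, $\E(U_nK)=\E(P_nL)=0$, whence $\E(\hat\theta_n)=2\transposee{A}B-\transposee{A}CA$. Combined with Lemma~\ref{biais0}, this gives
\begin{equation*}
2\transposee{A}B-\transposee{A}CA-\theta=\B(\hat\theta_n)=-\iiint[S_Mf(x,y_1)-f(x,y_1)][S_Mf(x,y_2)-f(x,y_2)]\eta(x,y_1,y_2)\,dxdy_1dy_2.
\end{equation*}
One may also check this directly: $\transposee{A}B=\sum_{i\in M}a_ib_i=\iiint S_Mf(x,y_1)f(x,y_2)\eta(x,y_1,y_2)\,dxdy_1dy_2$ and $\transposee{A}CA=\sum_{i,i'\in M}a_ia_{i'}c_{ii'}=\iiint S_Mf(x,y_1)S_Mf(x,y_2)\eta(x,y_1,y_2)\,dxdy_1dy_2$, and the identity then follows after symmetrising the cross term using $\eta(x,y_1,y_2)=\eta(x,y_2,y_1)$.

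It remains to apply Lemma~\ref{biais1}, which yields $|2\transposee{A}B-\transposee{A}CA-\theta|\leq\Delta_Y\|\eta\|_\infty\sup_{i\notin M}|c_i|^2$. Under A1 we have $\bigl(\sup_{i\notin M}|c_i|^2\bigr)^2\approx|M|/n^2$, i.e. $\sup_{i\notin M}|c_i|^2\approx\sqrt{|M|}/n$, so that
\begin{equation*}
\sqrt{n}\,\bigl|2\transposee{A}B-\transposee{A}CA-\theta\bigr|\leq\sqrt{n}\,\Delta_Y\|\eta\|_\infty\sup_{i\notin M}|c_i|^2=O\bigl(\Delta_Y\|\eta\|_\infty\sqrt{|M|/n}\,\bigr),
\end{equation*}
which tends to $0$ in the regime $|M|/n\to 0$ treated here (the statement being deterministic, this is a fortiori convergence in probability). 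There is essentially no obstacle: the only point deserving a line of care is the algebraic identification of $2\transposee{A}B-\transposee{A}CA$ with $\E(\hat\theta_n)$, equivalently with $\theta$ plus the projection bias of Lemma~\ref{biais0}, after which Lemma~\ref{biais1} and the rate prescribed by A1 do all the work. Together with Lemma~\ref{var4}, Lemma~\ref{var5} and the central limit theorem for $\sqrt{n}P_nL$, this completes the proof of the asymptotic normality (\ref{na}).
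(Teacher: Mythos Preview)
Your proposal is correct. You take a cleaner route than the paper: rather than re-expanding $2\transposee{A}B-\transposee{A}CA-\theta$ from scratch, you observe (via the Hoeffding decomposition (\ref{thetaH}) and the centering of $K,L$) that this quantity is exactly $\B(\hat\theta_n)$, and then invoke Lemma~\ref{biais1} directly to get the bound $\Delta_Y\|\eta\|_\infty\sup_{i\notin M}|c_i|^2$. The paper instead rewrites the expression as a sum of two integrals each containing a single factor $S_Mf-f$, bounds via Cauchy--Schwarz by $2\sqrt{n}\,\Delta_Y\|f\|_2\|\eta\|_\infty\|S_Mf-f\|_2$, and then appeals to A1. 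Your route has the advantage of reusing work already done and of exploiting the full quadratic structure of the bias (two factors of $S_Mf-f$), which yields the sharper bound $\sup_{i\notin M}|c_i|^2$ rather than its square root; both, of course, give the same conclusion $O(\sqrt{|M|/n})\to 0$ under A1 and $|M|/n\to 0$.
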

\begin{proof}
$\sqrt{n}(2A'B-A'CA-\theta)\rightarrow 0$ is equal to
\begin{eqnarray*}
&&\sqrt{n}\left[2\iint g(x,y)S_Mf(x,y)dxdy\right.\\
&& - \iiint S_Mf(x,y_1)S_Mf(x,y_2)\eta(x,y_1,y_2)dxdy_1dy_2\\
&& \left.-\iiint f(x,y_1)f(x,y_2)\eta(x,y_1,y_2)dxdy_1dy_2\right].\\
\end{eqnarray*}
By replacing $g$ we get 
\begin{eqnarray*}
&&\sqrt{n}\left[2\iiint S_Mf(x,y_1)f(x,y_2)\eta(x,y_1,y_2)dxdy_1dy_2\right.\\
&& - \iiint S_Mf(x,y_1)S_Mf(x,y_2)\eta(x,y_1,y_2)dxdy_1dy_2\\
&&\left.-\iiint f(x,y_1)f(x,y_2)\eta(x,y_1,y_2)dxdy_1dy_2\right]\\
\end{eqnarray*}
With integral manipulation, we show it is also equal to
\begin{eqnarray*}
&&\sqrt{n}\left[\iiint S_Mf(x,y_1)(f(x,y_2)-S_Mf(x,y_2))\eta(x,y_1,y_2)dxdy_1dy_2\right.\\
&&\left.- \iiint f(x,y_2)(S_Mf(x,y_1)-f(x,y_1))\eta(x,y_1,y_2)dxdy_1dy_2\right]\\
&&\leq \sqrt{n} \Delta_Y \|\eta\|_{\infty}\left( \|S_Mf\|_2\|S_Mf-f\|_2+\| f\|_2\|S_Mf-f\|_2\right)\\
&&\leq 2\sqrt{n} \Delta_Y \| f\|_2\|\|\eta\|_{\infty}\|S_Mf-f\|_2\\
&&\leq 2\sqrt{n} \Delta_Y \| f\|_2\|\|\eta\|_{\infty} \left(\sup_{i\notin M}|c_{i}|^{2}\right)^{1/2}\\
&&\approx 2\Delta_Y \| f\|_2\|\|\eta\|_{\infty} \sqrt{\frac{m}{n}},
\end{eqnarray*}
which converges to $0$ when $n\rightarrow \infty$ since $m/n\rightarrow 0$.
\end{proof}
 Collecting now the results of Lemmas \ref{var4}, \ref{var5} and \ref{var6} we get (\ref{na}) since
$$\sqrt{n}\left(\hat{\theta}_{n}-\theta\right)\rightarrow \N(0,\Lambda(f,\eta))$$
in distribution. We finally have to prove (\ref{ea}). Remark that
\begin{eqnarray*}
n\E\left(\hat{\theta}_n-\theta\right)^2&=&n\B^2(\hat{\theta}_n)+n\V(\hat{\theta}_n)\\
&=& n\B^2(\hat{\theta}_n)+n\V(U_nK)+n\V(P_nL)
\end{eqnarray*}
We previously proved that
\begin{eqnarray*}
n\B^2(\hat{\theta}_n)&\leq& \lambda \Delta_Y^2\|\eta\|_{\infty}^2 \frac{m}{n} ~~\textrm{for some } \lambda\in \R,\\
n\V(U_nK)&\leq& \mu \Delta_Y^2\|f\|_{\infty}^2\|\eta\|_{\infty}^2 \frac{m}{n} ~~\textrm{for some }\mu\in \R.
\end{eqnarray*}
Moreover, (\ref{variances}) imply 
$$\left| n\V(P_nL)-\Lambda(f,\eta)\right| \leq \gamma\left[ \|S_Mf-f\|_2+\|S_Mg-g\|_2\right],$$
where $\gamma$ is a increasing function of $\|f\|_{\infty}$,$\|\eta\|_{\infty}$ and $\Delta_Y$. We then deduce (\ref{ea}) which ends the proof of Theorem \ref{tfq}.

\subsection{Proof of Theorem \ref{cramerrao1}}

To prove the inequality we will use the work of \citet{IK91} (see also chapter 25 of \citet{VV98}) on efficient estimation. 
The first step is the computation of the Fr\'echet derivative of $\theta(f)$ at a point $f_0$. 
Straightforward calculations show that
\begin{eqnarray*}
\theta(f)-\theta(f_0)&=&\iint \left[2\int \psi(x,y,z)f_0(x,z)dz\right]\left(f(x,y)-f_0(x,y)\right)dxdy\\
&& + \; O\left(\iint (f(x,y)-f_0(x,y))^2dxdy\right)
\end{eqnarray*}
from which we deduce that the Fr\'echet derivative of $\theta(f)$ at $f_0$ is\begin{equation*}
\theta'(f_0)\cdot u=\left< 2\int \psi(x,y,z)f_0(x,z)dz, u\right>\quad (u\in L^2(dxdy)),
\end{equation*}
where $\left<\cdot,\cdot\right>$ is the scalar product in $L^2(dxdy)$.
We can now use the results of \citet{IK91}. 
Denote $H(f_0)=H(f_0)=\left\{ u\in L^2(dxdy), \iint u(x,y)\sqrt{f_0(x,y)}dxdy=0\right\}$ the set of functions in $L^2(dxdy)$ orthogonal to $\sqrt{f_0}$, 
$\textrm{Proj}_{H(f_0)}$ the projection on $H(f_0)$, $A_n(t)=(\sqrt{f_0})t/\sqrt{n}$ and $P_{f_0}^{(n)}$ the joint distribution of $(X_1,\ldots,X_n)$ under $f_0$. 
Since here $X_1,\ldots,X_n$ are i.i.d., $\left\{P_f^{(n)},f\in\mathcal{E}\right\}$ is locally asymptotically normal at all points $f_0\in\mathcal{E}$ in the direction $H(f_0)$ with normalizing factor $A_n(f_0)$. 
Ibragimov and Khas'minskii result say that under these conditions, denoting $K_n=B_n\theta'(f_0)A_n\textrm{Proj}_{H(f_0)}$ with $B_n(u)=\sqrt{n}u$, if $K_n\rightarrow K$ weakly and if $K(u)=\left<t,u\right>$, 
then for every estimator $\hat{\theta}_n$ of $\theta(f)$ and every family $\mathcal{V}(f_0)$ of vicinities of $f_0$, we have
\begin{equation*}
\inf_{\{\mathcal{V}(f_0)\}} \liminf_{n\rightarrow \infty} \sup_{f\in\mathcal{V}(f_0)} n\E(\hat{\theta}_n-\theta(f_0))^2\geq \|t\|_{L^2(dxdy)}^2.
\end{equation*}
Here,
\begin{equation*}
K_n(u)=\sqrt{n}\theta'(f_0)\cdot\frac{1}{\sqrt{n}}\sqrt{f_0} \textrm{Proj}_{H(f_0)}(u)=\theta'(f_0)\cdot \left(\sqrt{f_0}\left(u-\sqrt{f_0}\int u\sqrt{f}_0\right)\right)
\end{equation*}
does not depend on $n$ and
\begin{eqnarray*}
K(u)&=& \iint \left[2\int \psi(x,y,z)f_0(x,z)dz\right] \sqrt{f_0(x,y)}\\
&& \left(u(x,y)-\sqrt{f_0(x,y)}\int u\sqrt{f}_0\right) dxdy\\
&=& \iint \left[2\int \psi(x,y,z)f_0(x,z)dz\right] \sqrt{f_0(x,y)}u(x,y)dxdy\\
&&-\iint \left[2\int \psi(x,y,z)f_0(x,z)dz\right]f_0(x,y)dxdy\int u\sqrt{f}_0\\
&=&\left<t,u\right>
\end{eqnarray*}
where
\begin{eqnarray*}
t(x,y)&=&\left[2\int \psi(x,y,z)f_0(x,z)dz\right] \sqrt{f_0(x,y)}\\
&& - \left(\iint \left[2\int \psi(x,y,z)f_0(x,z)dz\right]f_0(x,y)dxdy\right)\sqrt{f_0(x,y)}.
\end{eqnarray*}
The semiparametric Cram\'er-Rao bound for our problem is $\|t\|_{L^2(dxdy)}^2$ :
\begin{eqnarray*}
\|t\|_{L^2(dxdy)}^2&=&4 \iint \left[\int \psi(x,y,z)f_0(x,z)dz\right]^2f_0(x,y)dxdy\\
&&-4\left(\iint \left[\int \psi(x,y,z)f_0(x,z)dz\right]f_0(x,y)dxdy\right)^2\\
&=& 4 \iint g_0(x,y)^2f_0(x,y)dxdy-4\left(\iint g_0(x,y)f_0(x,y)\right)^2\\
\end{eqnarray*}
where $\D{g_0(x,y)=\int \psi(x,y,z)f_0(x,z)dz}$. Finally, we recognize the expression of $\Lambda(f_0,\psi)$ given in Theorem \ref{tfq}.

\subsection{Proof of Theorem \ref{tfec}}

We will first control the remainder term $\Gamma_n$ :
$$\Gamma_{n}=\frac{1}{6}F'''(\xi)(1-\xi)^{3}.$$
Let us recall that
\begin{eqnarray*}
F'''(\xi)&=&\iiiint \frac{\left(\int\hat{f}(x,y)dy\right)^2}{\left(\int
  \xi f(x,y)+(1-\xi)\hat{f}(x,y)dy\right)^{5}}\\
  &&\left[\big(\hat{m}(x)-\varphi(y)\big)\big(\hat{m}(x)-\varphi(z)\big)\big(\hat{m}(x)-\varphi(t)\big)\right.\\
  &&\left(\int\hat{f}(x,y)dy\right)\dddot\psi\left(\hat{r}(\xi,x)\right)- 3\big(\hat{m}(x)-\varphi(y)\big)\big(\hat{m}(x)-\varphi(z)\big)\\
&&\left.\left(\int [\xi f(x,y)+(1-\xi)\hat{f}(x,y)]dy\right)
    \ddot\psi\left(\hat{r}(\xi,x)\right)\right]\\
&&\Big(f(x,y)-\hat{f}(x,y)\Big)\Big(f(x,z)-\hat{f}(x,z)\Big)\\
&&\Big(f(x,t)-\hat{f}(x,t)\Big)dxdydzdt
\end{eqnarray*}
Assumptions A2 and A3 ensure that the first part of the integrand is bounded by a constant $\mu$ :
\begin{eqnarray*}
\Gamma_{n}&\leq&\frac{1}{6}\mu\iiiint
|f(x,y)-\hat{f}(x,y)||f(x,z)-\hat{f}(x,z)|\\
&&|f(x,t)-\hat{f}(x,t)|dxdydzdt\\
&\leq&\frac{1}{6}\mu\int
\left(\int |f(x,y)-\hat{f}(x,y)|dy\right)^{3}dx\\
&\leq& \frac{1}{6}\mu\Delta_{Y}^{2}\iint |f(x,y)-\hat{f}(x,y)|^{3}dxdy
\end{eqnarray*}
by the H\"older inequality. Then $\E(\Gamma_{n}^{2})=O(\E[(\int|f-\hat{f}|^{3})^{2}])=O(\E[\|f-\hat{f}\|_{3}^{6}])$. 
Since $\hat{f}$ verifies assumption A2, this quantity has order $O(n_{1}^{-6\lambda})$. 
If we further assume that $n_{1}\approx n/\log(n)$ and $\lambda > 1/6$, we get $E(\Gamma_{n}^{2})=o(\frac{1}{n})$, which proves that the remainder term $\Gamma_n$ is negligible. 
We will now show that $\sqrt{n}\left(\hat{T}_n-T(f)\right)$ and $Z_n=\frac{1}{n_2}\sum_{j=1}^{n_2} H(f,X_j,Y_j) - \iint H(f,x,y)f(x,y)dxdy$ have the same asymptotic behavior. 
The idea is that we can easily get a central limit theorem for $Z_n$ with asymptotic variance
\begin{equation*}
C(f)=\iint H(f,x,y)^2 f(x,y)dxdy-\left( \iint H(f,x,y) f(x,y)dxdy\right)^2,
\end{equation*}
which imply both (\ref{na2}) and (\ref{ea2}) (we will show at the end of the proof that $C(f)$ can be expressed such as in the theorem). 
In order to show that $\sqrt{n}\left(\hat{T}_n-T(f)\right)$ and $Z_n$ have the same asymptotic behavior, we will prove that
\begin{equation*}
R=\sqrt{n}\left[\hat{T}_n-T(f)-\left(\frac{1}{n_2}\sum_{j=1}^{n_2} H(f,X_j,Y_j) - \iint H(f,x,y)f(x,y)dxdy\right)\right]
\end{equation*}
has a second-order moment converging to $0$. Let us note that $R=R_1+R_2$ where
\begin{eqnarray*}
R_1&=& \sqrt{n}\left[\hat{T}_n-T(f)\right.\\
&&\left.-\left(\frac{1}{n_2}\sum_{j=1}^{n_2} H(\hat{f},X_j,Y_j) - \iint H(\hat{f},x,y)f(x,y)dxdy\right)\right],\\
R_2&=&\sqrt{n}\left[\frac{1}{n_2} \sum_{j=1}^{n_2} \left(H(\hat{f},X_j,Y_j) - \iint H(\hat{f},x,y)f(x,y)dxdy\right) \right]\\
&&- \sqrt{n}\left[\frac{1}{n_2} \sum_{j=1}^{n_2} \left(H(f,X_j,Y_j) - \iint H(f,x,y)f(x,y)dxdy\right) \right].
\end{eqnarray*}
We propose to show that both $\E(R_1^2)$ and $\E(R_2^2)$ converge to $0$.
We can write $R_1$ as follows :
\begin{equation*}
R_1= -\sqrt{n}\left[ \hat{Q}'-Q'+ \Gamma_n\right]
\end{equation*}
where
\begin{eqnarray*}
Q'&=&\iiint K(\hat{f},x,y,z)f(x,y)f(x,z),\\
K(\hat{f},x,y,z)&=& \frac{1}{2}\frac{\ddot\psi(\hat{m}(x))}{\left(\int\hat{f}(x,y)dy\right)} \big(\hat{m}(x)-\varphi(y)\big)\big(\hat{m}(x)-\varphi(z)\big)
\end{eqnarray*}
and $\hat{Q}'$ is the corresponding estimator. Since $\E\left(\Gamma_n^2\right)=o(1/n)$, we just have to control the expectation of the square of $\sqrt{n}\left[ \hat{Q}'-Q'\right]$ :
\begin{lemma}\label{qq}
Assuming the hypotheses of Theorem \ref{tfec} hold, we have
\begin{equation*}
\lim_{n\rightarrow\infty} n\E\left(\hat{Q}'-Q'\right)^2 = 0.
\end{equation*}
\end{lemma}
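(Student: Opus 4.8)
The plan is to recognise $\hat Q'$ as the crossed--quadratic--functional estimator $\hat\theta_{n_2}$ of Theorem \ref{tfq}, applied \emph{conditionally on} $\hat f$ with the bounded symmetric kernel $\eta_n(x,y,z):=K(\hat f,x,y,z)$, and then to show that the associated asymptotic variance $\Lambda(f,\eta_n)$ vanishes. Since $\hat f$ is built on $n_1\approx n/\log n$ observations that are independent of the $n_2\sim n$ observations entering $\hat Q'$, conditionally on $\hat f$ we are exactly in the frame of Theorem \ref{tfq} with $\eta=\eta_n$, sample size $n_2$, and $Q'=\theta(f)$ for this kernel; A1 and $|M_n|/n_2\to0$ follow from the hypotheses since $n_2\sim n$. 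Assumptions A2--A3 give $0<\alpha-\epsilon\le\hat f\le\beta+\epsilon$ on $\mathrm{supp}\,f$, $\hat m(x)\in[\chi_1,\chi_2]$, and $\int\hat f(x,y)\,dy\ge(\alpha-\epsilon)\Delta_Y$; together with $\psi\in C^3([\chi_1,\chi_2])$ this bounds $\|\eta_n\|_\infty$ by a constant not depending on $n$ nor on the realisation of $\hat f$, so the constant $\gamma_1$ of Theorem \ref{tfq}(i) --- an increasing function of $\|f\|_\infty,\|\eta_n\|_\infty,\Delta_Y$ --- is dominated by some fixed $\gamma_1^*$. Applying (\ref{ea}) conditionally on $\hat f$ then yields
\begin{equation*}
\left|\,n_2\,\E\!\left((\hat Q'-Q')^2\,\big|\,\hat f\right)-\Lambda(f,\eta_n)\right|\le\gamma_1^*\left[\frac{|M_n|}{n_2}+\|S_{M_n}f-f\|_2+\|S_{M_n}g_n-g_n\|_2\right],
\end{equation*}
where $g_n(x,y)=\int f(x,u)\eta_n(x,y,u)\,du$.

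Next I would make $g_n$ explicit. Writing $m(x)=\E(\varphi(Y)|X=x)$ and using $\int\varphi(u)f(x,u)\,du=m(x)\int f(x,u)\,du$, one gets
\begin{equation*}
g_n(x,y)=\frac12\,\frac{\ddot\psi(\hat m(x))\int f(x,v)\,dv}{\int\hat f(x,v)\,dv}\,\big(\hat m(x)-\varphi(y)\big)\,\big(\hat m(x)-m(x)\big),
\end{equation*}
and the same algebra gives $\hat m(x)-m(x)=\frac{\int f(x,v)\,dv}{\int\hat f(x,v)\,dv}\int(\varphi(u)-m(x))(\hat f(x,u)-f(x,u))\,du$, hence $|\hat m(x)-m(x)|\le C_1\int|\hat f(x,u)-f(x,u)|\,du$ pointwise by A2--A3. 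Therefore $|g_n(x,y)|\le C_2\int|\hat f(x,u)-f(x,u)|\,du$, and Cauchy--Schwarz (applied twice, using $f\in L^2$ over a set of length $\Delta_Y$) gives $\|g_n\|_2\le C_3\|\hat f-f\|_2$. This produces at once $\Lambda(f,\eta_n)\le4\|f\|_\infty\|g_n\|_2^2\le C_4\|\hat f-f\|_2^2$ and, since $S_{M_n}$ is an orthogonal projection, $\|S_{M_n}g_n-g_n\|_2\le\|g_n\|_2\le C_3\|\hat f-f\|_2$.

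Taking expectation over $\hat f$ in the displayed bound then gives
\begin{equation*}
n_2\,\E(\hat Q'-Q')^2\le C_4\,\E\|\hat f-f\|_2^2+\gamma_1^*\frac{|M_n|}{n_2}+\gamma_1^*\|S_{M_n}f-f\|_2+\gamma_1^*C_3\,\E\|\hat f-f\|_2,
\end{equation*}
and I would conclude by letting each term go to $0$: $\E\|\hat f-f\|_2^2\le C(2,2)n_1^{-2\lambda}\to0$ and $\E\|\hat f-f\|_2\le C(2,1)n_1^{-\lambda}\to0$ by A3 with $n_1\approx n/\log n$; $|M_n|/n_2\to0$ since $|M_n|/n\to0$ and $n_2\sim n$; and $\|S_{M_n}f-f\|_2\to0$ by A1. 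As $n_2\sim n$, this gives $n\,\E(\hat Q'-Q')^2\to0$, which is Lemma \ref{qq}.

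The main obstacle is conceptual rather than computational: although $\hat\theta_{n_2}$ generically has a $\sqrt n$--scale fluctuation of order $\Lambda(f,\eta_n)$ --- which is \emph{not} $o(1)$ in general --- here $\Lambda(f,\eta_n)\to0$ precisely because the contracted kernel $g_n$ carries the factor $\hat m-m$, which vanishes as $\hat f\to f$; the explicit form of $g_n$ above plus the uniform (in the realisation of $\hat f$) control of $\gamma_1$ are what make this work, which is exactly why (\ref{ea}) was stated with explicit $n$--dependence. A secondary point to be handled carefully is the measurability/independence argument legitimising the conditioning on $\hat f$ (so that, given $\hat f$, one may treat $\eta_n$ as a fixed admissible kernel in Theorem \ref{tfq}).
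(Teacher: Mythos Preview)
Your proof is correct and follows essentially the same approach as the paper: apply the explicit bound (\ref{ea}) of Theorem \ref{tfq} conditionally on $\hat f$, control $\gamma_1$ uniformly via A2--A3, and show that the asymptotic variance term $\Lambda(f,\eta_n)$ vanishes because $g_n\to 0$. The only tactical difference is that the paper introduces the auxiliary $g(x,y)=\int K(f,x,y,z)f(x,z)\,dz$, observes that $g\equiv 0$ identically, and then bounds $\|\hat g-g\|_2$ by the mean value theorem applied to $K(\hat f,\cdot)-K(f,\cdot)$, whereas you compute $g_n=\hat g$ directly and extract the factor $\hat m(x)-m(x)$ explicitly; your route is slightly more transparent (modulo a harmless extra factor $\int f/\int\hat f$ in your formula for $\hat m-m$, which should read $\hat m(x)-m(x)=\big(\int\hat f(x,v)\,dv\big)^{-1}\!\int(\varphi(u)-m(x))(\hat f(x,u)-f(x,u))\,du$).
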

\begin{proof}
The bound given in (\ref{ea}) states that if $|M_n|/n\rightarrow 0$ we have
\begin{eqnarray*}
&&\left| n\E\left[\left(\hat{Q}'-Q'\right)^2|\hat{f}\right]\right.\\
&&\left. - 4  \left[ \iint \hat{g}(x,y)^2f(x,y)dxdy	
-\left( \iint \hat{g}(x,y)f(x,y)dxdy\right)^2\right]\right|\\
&&\leq \gamma_1(\|f\|_{\infty},\|\psi\|_{\infty},\Delta_Y) \left[ \frac{|M_n|}{n}+\|S_{M}f-f\|_2+\|S_{M}\hat{g}-\hat{g}\|_2\right]
\end{eqnarray*}
where $\D{\hat{g}(x,y)=\int K(\hat{f},x,y,z)f(x,z)dz}$. By deconditioning, we get
\begin{eqnarray*}
&&\left| n\E\left[\left(\hat{Q}'-Q'\right)^2\right]\right.\\
&&\left. - 4  \E\left[ \iint \hat{g}(x,y)^2f(x,y)dxdy	
-\left( \iint \hat{g}(x,y)f(x,y)dxdy\right)^2\right]\right|\\
&&\leq \gamma_1(\|f\|_{\infty},\|\psi\|_{\infty},\Delta_Y) \left[ \frac{|M_n|}{n}+\|S_{M}f-f\|_2+\E\left(\|S_{M}\hat{g}-\hat{g}\|_2\right)\right].
\end{eqnarray*}
Note that
\begin{eqnarray*}
\E\left(\|S_{M}\hat{g}-\hat{g}\|_2\right)&\leq&\E\left(\|S_{M}\hat{g}-S_Mg\|_2\right) + \E\left(\|S_{M}g-g\|_2\right)\\
&\leq& \E\left(\|\hat{g}-g\|_2\right) + \E\left(\|S_{M}g-g\|_2\right)
\end{eqnarray*}
where $\D{g(x,y)=\int K(f,x,y,z)f(x,z)dz}$. The second term converges to $0$ since $g\in L^2(dxdy)$ and $\forall t\in L^2(dxdy)$, $\int (S_{M}t-t)^2d\mu\rightarrow 0$. Moreover
\begin{eqnarray*}
\|\hat{g}-g\|_2^2&=&  \iint \left[\hat{g}(x,y)-g(x,y)\right]^2f(x,y)dxdy\\
&=&  \iint \left[ \int \left(K(\hat{f},x,y,z)-K(f,x,y,z)\right)f(x,z)dz\right]^2f(x,y)dxdy\\
&\leq&  \iint \left[ \int \left(K(\hat{f},x,y,z)-K(f,x,y,z)\right)^2dz\right]\\
&&\left[\int f(x,z)^2dz\right]f(x,y)dxdy\\
&\leq& \Delta_Y^2\|f\|_{\infty}^3 \iiint \left(K(\hat{f},x,y,z)-K(f,x,y,z)\right)^2dxdz\\
&\leq& \delta \Delta_Y^3\|f\|_{\infty}^3 \iint (f(x,y)-\hat{f}(x,y))^2dxdy
\end{eqnarray*}
for some constant $\delta$ by applying the mean value theorem to $K(f,x,y,z)-K(\hat{f},x,y,z)$. Of course, the bound $\delta$ is obtained here by considering assumptions A1, A2 and A3. 
Since $\E(\|f-\hat{f}\|_2)\rightarrow 0$, we get $\E\left(\|\hat{g}-g\|_2\right)\rightarrow 0$. Let us now show that the expectation of
$$\iint \hat{g}(x,y)^2f(x,y)dxdy-\left( \iint \hat{g}(x,y)f(x,y)dxdy\right)^2$$
converges to 0. We will only develop the proof for the first term :
\begin{eqnarray*}
&&\left|\iint \hat{g}(x,y)^2f(x,y)dxdy- \iint g(x,y)^2f(x,y)dxdy\right|\\
&&\leq \iint \left|\hat{g}(x,y)^2-g(x,y)^2\right|f(x,y)dxdy\\
&&\leq \lambda \iint \left(\hat{g}(x,y)-g(x,y)\right)^2dxdy\\
&&\leq \lambda \|\hat{g}-g\|_2^2
\end{eqnarray*}
for some constant $\lambda$. By taking the expectation of both sides, we see it is enough to show that $\E\left(\|\hat{g}-g\|_2^2\right)\rightarrow 0$, which is done exactly as above. Besides, we can verify that
\begin{eqnarray*}
g(x,y)&=& \int K(f,x,y,z)f(x,z)dz\\
&=& \frac{1}{2}\frac{\ddot\psi(m(x))}{\left(\int f(x,y)dy\right)} \big(m(x)-\varphi(y)\big)\\
&&\left(m(x)\int f(x,z)dz-\int \varphi(z)f(x,z)dz\right)\\
&=& 0,
\end{eqnarray*}
which proves that the expectation of$\D{\iint \hat{g}(x,y)^2f(x,y)dxdy}$ converges to $0$. 
Similar considerations show that the expectation of the second term $\D{\left( \iint \hat{g}(x,y)f(x,y)dxdy\right)^2}$ also converges to  $0$. 
We finally have
$$\lim_{n\rightarrow\infty} n\E\left(\hat{Q}'-Q'\right)^2 = 0.$$
\end{proof}
 Lemma \ref{qq} imply that $\E(R_1^2)\rightarrow 0$. We will now prove that $\E(R_2^2)\rightarrow 0$ :
\begin{eqnarray*}
\E(R_2^2)&=&\frac{n}{n_2} \E\left[ \iint \left(H(f,x,y)-H(\hat{f},x,y)\right)^2 f(x,y)dxdy\right]\\
&&- \frac{n}{n_2} \E\left[ \iint H(f,x,y)f(x,y)dxdy - \iint H(\hat{f},x,y)f(x,y)dxdy\right]^2.
\end{eqnarray*}
The same arguments as before (mean value theorem and assumptions A2 and A3) show that $\E(R_2^2)\rightarrow 0$.
At last, we can give another expression for the asymptotic variance :
\begin{equation*}
C(f)=\iint H(f,x,y)^2 f(x,y)dxdy-\left( \iint H(f,x,y) f(x,y)dxdy\right)^2.
\end{equation*}
We will prove that 
$$C(f)=\E\left(\V(\varphi(Y)|X)\left[\dot\psi\left(\E(Y|X)\right)\right]^2\right)+\V\left(\psi\left(\E(\varphi(Y)|X)\right)\right).$$ 
Remark that 
\begin{eqnarray}
\iint H(f,x,y) f(x,y)dxdy&=& \iint \left( \left[ \varphi(y)-m(x)\right] \dot\psi(m(x)) + \psi(m(x))\right)f(x,y)dxdy\notag \\
&=& \iint m(x) \dot\psi(m(x)) f(x,y)dxdy- \iint m(x) \dot\psi(m(x))f(x,y)dxdy\notag\\
&&+ \iint \psi(m(x)) f(x,y)dxdy\notag \\
&=& \E\left(\psi\left(\E(\varphi(Y)|X)\right)\right). \label{hec}
\end{eqnarray}
Moreover,
\begin{eqnarray*}
H(f,x,y)^2&=&\left[ \varphi(y)-m(x)\right]^2 \dot\psi(m(x))^2 + \psi(m(x))^2+2\left[ \varphi(y)-m(x)\right] \dot\psi(m(x))\psi(m(x))\\
&=& \varphi(y)^2\dot\psi(m(x))^2 + m(x)^2\dot\psi(m(x))^2 -2\varphi(y)m(x)\dot\psi(m(x))^2\\
&& + \psi(m(x))^2+2\left[ \varphi(y)-m(x)\right] \dot\psi(m(x))\psi(m(x)).
\end{eqnarray*}
We can then rewrite $\D{\iint H(f,x,y)^2f(x,y)dxdy}$ as:
\begin{eqnarray*}
&& \iint  \varphi(y)^2\dot\psi(m(x))^2f(x,y)dxdy+ \iint m(x)^2\dot\psi(m(x))^2f(x,y)dxdy\\
&&-2\iint \varphi(y)m(x)\dot\psi(m(x))^2f(x,y)dxdy + \iint \psi(m(x))^2f(x,y)dxdy\\
&&+2\iint \varphi(y)\dot\psi(m(x))\psi(m(x))f(x,y)dxdy-2\iint m(x)\dot\psi(m(x))\psi(m(x))f(x,y)dxdy\\
&=& \iint v(x)\dot\psi(m(x))^2f(x,y)dxdy - \iint m(x)^2\dot\psi(m(x))f(x,y)dxdy + \iint \psi(m(x))^2f(x,y)dxdy\\
&=& \iint \left(\left[v(x) -m(x)^2\right]\dot\psi(m(x))^2 + \psi(m(x))^2\right)f(x,y)dxdy\\
&=& \E\left(\left[v(X) -m(X)^2\right]\dot\psi(m(X))^2\right) + \E\left(\psi(m(X))^2\right)\\
&=& \E\left(\left[\E(\varphi(Y)^2|X) -\E(\varphi(Y)|X)^2\right]\left[\dot\psi(\E(\varphi(Y)|X))\right]^2\right)+ \E\left(\psi(\E(\varphi(Y)|X))^2\right)\\
&=& \E\left(\V(\varphi(Y)|X)\left[\dot\psi\left(\E(Y|X)\right)\right]^2\right) + \E\left(\psi(\E(\varphi(Y)|X))^2\right)
\end{eqnarray*}
where we have set $v(x)=\int \varphi(y)^2f(x,y)dy/\int f(x,y)dy$. This result and (\ref{hec}) give the desired form for $C(f)$ which ends the proof of Theorem \ref{tfec}.

\subsection{Proof of Theorem \ref{cramerrao2}}

We follow the proof of Theorem \ref{cramerrao1}. Assumptions A2 and A3 imply that
\begin{eqnarray*}
T(f)-T(f_0)&=&\iint \left(\big[\varphi(y)-m_0(x)\big]\dot\psi(m_0(x))+\psi(m_0(x))\right)\\
&&\Big(f(x,y)-f_0(x,y)\Big)dxdy+O\left(\int (f-f_0)^2\right)
\end{eqnarray*}
where $m_0(x)=\int \varphi(y)f_0(x,y)dy/\int f_0(x,y)dy$. This result shows that the Fr\'echet derivative of $T(f)$ at $f_0$ is $T'(f_0)\cdot h =\left< H(f_0,\cdot),h\right>$ where 
$$H(f_0,x,y)=\left(\big[\varphi(y)-m_0(x)\big]\dot\psi(m_0(x))+\psi(m_0(x))\right).$$
We then deduce that
\begin{eqnarray*}
K(h)&=& T'(f_0)\cdot \left(\sqrt{f_0}\left(h-\sqrt{f_0}\int h\sqrt{f}_0\right)\right)\\
&=& \int H(f_0,\cdot) \sqrt{f_0}h- \int H(f_0,\cdot) \sqrt{f_0} \int h\sqrt{f_0}\\
&=& \left<t,h\right>
\end{eqnarray*}
with
\begin{equation*}
t=H(f_0,\cdot)\sqrt{f_0}-\left(\int H(f_0,\cdot)f_0\right)\sqrt{f_0}.\\
\end{equation*}
The semiparametric Cram\'er-Rao bound for this problem is thus
\begin{equation*}
\|t\|_{L^2(dxdy)}^2=\int H(f_0,\cdot)^2 f_0 - \left(\int H(f_0,\cdot)f_0\right)^2= C(f_0)
\end{equation*}
where we recognize the expression of $C(f_0)$ in Theorem \ref{cramerrao2}.


\begin{thebibliography}{0}
\providecommand{\natexlab}[1]{#1}

\end{thebibliography}


\begin{thebibliography}{9}

\bibitem[Antoniadis(1984)]{anto84}
Antoniadis, A. (1984).
\newblock Analysis of variance on function spaces.
\newblock \emph{Math. Oper. Forsch. und Statist.}, series Statistics, 15(1):59--71.

\bibitem[Bayarri \etal(2007)]{bayber07}
Bayarii, M.J., Berger, J., Paulo, R., Sacks, J., Cafeo, J.A., Cavendish, J., Lin, C., and Tu, J. (2007).
\newblock A framework for validation of computer models.
\newblock {\em Technometrics}, 49:138--154.

\bibitem[Borgonovo(2007)]{borgo07}
Borgonovo E. (2007).
\newblock A New Uncertainty Importance Measure.
\newblock \emph{Reliability Engineering and System Saftey}, 92:771--784.

\bibitem[Carrasco \etal(2007)]{Carra07}
Carrasco, N., Banaszkiewicz, M., Thissen, R.,
  Dutuit, O., and Pernot, P. (2007).
\newblock Uncertainty analysis of bimolecular reactions in {T}itan ionosphere
  chemistry model.
\newblock \emph{Planetary and Space Science\/}, 55:141--157.

\bibitem[Chac\'on and Tenreiro(2011)]{chacon11}
Chac\'on, J.E. and Tenreiro C. (2011)
\newblock Exact and Asymptotically Optimal Bandwidths for Kernel Estimation of Density Functionals.
\newblock \emph{Methodol Comput Appl Probab}, DOI 10.1007/s11009-011-9243-x.

\bibitem[Cukier \etal(1973)]{CUK73}
Cukier, R.I., Fortuin, C.M., Shuler, K.E.,
  Petschek, A.G., and Schaibly, J.H. (1973).
\newblock Study of the sensitivity of coupled reaction systems to uncertainties
  in rate coefficients. {I} {T}heory.
\newblock \emph{The Journal of Chemical Physics\/}, 59:3873--3878.

\bibitem[Da Veiga \etal(2009)]{SDV09}
{Da Veiga}, S., Wahl, F., and Gamboa, F. (2006).
\newblock Local polynomial estimation for sensitivity analysis on models with
  correlated inputs.
\newblock \emph{Technometrics\/}, 59(4):452--463.

\bibitem[Fan and Gijbels(1996)]{FG96}
Fan, J. and Gijbels, I. (1996).
\newblock \emph{Local Polynomial Modelling and its Applications}. 
\newblock London: Chapman and Hall.

\bibitem[Ferrigno and Ducharme(2005)]{SF05}
Ferrigno, S. and Ducharme, G.R. (2005).
\newblock Un test d'ad\'equation global pour la fonction de r\'epartition
  conditionnelle.
\newblock \emph{Comptes rendus. Math\'ematique\/}, 341:313--316.

\bibitem[Gin\'e and Nickl(2008)]{gine2008a}
Gin\'e, E. and Nickl, R. (2008).
\newblock A simple adaptive estimator of the integrated square of a density.
\newblock \emph{Bernoulli}, 14(1):47--61

\bibitem[Gin\'e and Mason(2008)]{gine2008b}
Gin\'e, E. and Mason, D.M (2008).
\newblock Uniform in Bandwidth Estimation of Integral Functionals of the Density Function
\newblock \emph{Scandinavian Journal of Statistics,}, 35:739--761

\bibitem[Hoeffding(1948)]{hoeff48}
Hoeffding, W. (1948). 
\newblock A class of statistics with asymptotically normal distribution. 
\newblock \emph{The annals of Mathematical Statistics}, 19:293--32
5.
\bibitem[Houdret and Reynaud(2002)]{HR02}
{Houdr\'e}, C. and Reynaud, P. (2002).
\newblock Stochastic inequalities and applications.
\newblock In \emph{Euroconference on Stochastic inequalities and applications}.
  Birkhauser.

\bibitem[Ibragimov and Khas\'minskii(1991)]{IK91}
Ibragimov, I.A. and {Khas'minskii}, R.Z. (1991).
\newblock Asymptotically normal families of distributions and efficient
  estimation.
\newblock \emph{The Annals of Statistics\/},19:1681--1724.

\bibitem[Iooss \etal(2011)]{IMDR01}
Iooss, B., Marrel, A., Da Veiga, S. and Ribatet, M. (2011).
\newblock Global sensitivity analysis of stochastic computer models with joint metamodels
\newblock \emph{Stat Comput\/},DOI 10.1007/s11222-011-9274-8.

\bibitem[Iooss \etal(2006)]{IVD06}
Iooss, B., Van Dorpe, F. and Devictor, N. (2006).
\newblock Response surfaces and sensitivity analyses for an environmental model of dose calculations. 
\newblock \emph{Reliability Engineering and System Safety}, 91:1241-1251.

\bibitem[Janon \etal(2012)]{janon12}
Janon, A., Klein, T., {Lagnoux-Renaudie}, A., Nodet, M. and Prieur, C. (2012).
\newblock Asymptotic normality and efficiency of two Sobol index estimators.
\newblock HAL e-prints, {http://hal.inria.fr/hal-00665048}.

\bibitem[Kennedy and O'Hagan(2001)]{kenoha01}
Kennedy, M. and O'Hagan, A. (2001).
\newblock Bayesian calibration of computer models.
\newblock {\em Journal of the Royal Statistical Society}, 63(3):425--464.

\bibitem[Kerkyacharian and Picard(1996)]{KIKI96}
Kerkyacharian, G. and Picard, D. (1996).
\newblock Estimating nonquadratic functionals of a density using haar wavelets.
\newblock \emph{The Annals of Statistics\/}, 24:485--507.

\bibitem[Laurent(1996)]{BL96}
Laurent, B. (1996).
\newblock Efficient estimation of integral functionals of a density.
\newblock \emph{The Annals of Statistics\/}, 24:659--681.

\bibitem[Laurent(2005)]{BL05}
Laurent, B. (2005).
\newblock Adaptive estimation of a quadratic functional of a density by model
  selection.
\newblock \emph{ESAIM: Probability and Statistics\/}, 9:1--19.

\bibitem[Leonenko and Seleznjev(2010]{leo10}
Leonenko N. and Seleznjev O. (2010).
\newblock Statistical inference for the $\epsilon$-entropy and the quadratic Rényi entropy.
\newblock \emph{Journal of Multivariate Analysis}, 101:1981--1994.

\bibitem[Levit(1978)]{LEV78}
Levit, B.Y. (1978).
\newblock Asymptotically efficient estimation of nonlinear functionals.
\newblock \emph{Problems Inform. Transmission\/}, 14:204--209.

\bibitem[Li(1991)]{LI91}
Li, K.C. (1991).
\newblock Sliced inverse regression for dimension reduction.
\newblock \emph{Journal of the American Statistical Association\/}, 86:316--327.

\bibitem[Liu \etal(2006)]{LCS06}
Liu, H., Chen, W. and  Sudjianto, A. (2006).
\newblock Relative entropy based method for probabilistic sensitivity analysis in engineering design.
\newblock \emph{Journal of Mechanical Design}, 128(2):326--336.

\bibitem[Loubes \etal(2011)]{loubes11}
{Loubes}, J.-M. and {Marteau}, C. and {Solis}, M. and {Da Veiga}, S. (2011).
\newblock Efficient estimation of conditional covariance matrices for dimension reduction.
\newblock ArXiv e-prints, {http://adsabs.harvard.edu/abs/2011arXiv1110.3238L}.

\bibitem[Manceau \etal(2001)]{manmez01}
Manceau, E., Mezghani, M., Zabalza-Mezghani, I., and Roggero, F. (2001).
\newblock Combination of experimental design and joint modeling methods for quantifying the risk associated with deterministic and stochastic uncertainties - An integrated test study.
\newblock {\em 2001 SPE Annual Technical Conference and Exhibition, New Orleans, 30 September-3 October}, paper SPE 71620.

\bibitem[McKay(1995)]{MCK95}
McKay, M.D. (1995).
\newblock Evaluating prediction uncertainty.
\newblock Tech. Rep. NUREG/CR-6311, U.S. Nuclear Regulatory Commission and Los
  Alamos National Laboratory.

\bibitem[Oakley and O'Hagan(2004)]{OOH04}
Oakley, J.E. and O'Hagan, A. (2004).
\newblock Probabilistic sensitivity analysis of complex models : a bayesian
  approach.
\newblock \emph{Journal of the Royal Statistical Society Series B\/}, 66:751--769.

\bibitem[Owen(1994)]{owen94}
Owen, A.B. (1994).
\newblock Lattice sampling revisited: Monte Carlo variance of means over randomized orthogonal arrays.
\newblock \emph{The Annals of Statistics}, 22:930--945.

\bibitem[Saltelli \etal(2000)]{salcha00}
Saltelli, A., Chan, K., and Scott, E., editors (2000).
\newblock {\em Sensitivity analysis}.
\newblock Wiley Series in Probability and Statistics. Wiley.

\bibitem[Santner \etal.(2003)]{santner03}
Santner T., Williams B. and Notz W. (2003).
\newblock  The design and analysis of computer experiments.
\newblock  New York: Springer Verlag.

\bibitem[Sobol'(1993)]{SOB93}
Sobol', I~M. (1993).
\newblock Sensitivity estimates for nonlinear mathematical models.
\newblock \emph{MMCE\/}, 1:407--414.

\bibitem[Turanyi(1990)]{T90}
Turanyi, T. (1990).
\newblock Sensitivity analysis of complex kinetic systems.
\newblock \emph{Journal of Mathematical Chemistry\/}, 5:203--248.

\bibitem[Van Der Vaart(1998)]{VV98}
{Van Der Vaart}, A.W. (1998).
\newblock \emph{Asymptotic Statistics}.
\newblock Cambridge: Cambridge University Press.

\bibitem[Wand and Jones(1994)]{WJ94}
Wand, M. and Jones, M. (1994).
\newblock \emph{Kernel Smoothing}
\newblock London: Chapman and Hall.

\end{thebibliography}
\end{document}